\newcommand{\bc}{\begin{center}}
\newcommand{\ec}{\end{center}}
\newcommand{\bt}{\begin{tabular}}
\newcommand{\et}{\end{tabular}} 
\newcommand{\bea}{\begin{eqnarray}}
\newcommand{\eea}{\end{eqnarray}}
\newcommand{\bean}{\begin{eqnarray*}}
\newcommand{\eean}{\end{eqnarray*}}
\newcommand{\ba}{\begin{array}}
\newcommand{\ea}{\end{array}}
\def\be{\begin{eqnarray}}
\def\ee{\end{eqnarray}}
\def\ben{\begin{eqnarray*}}
\def\een{\end{eqnarray*}}
\newcommand{\RL}{{\mathbb R}}
\newcommand{\Nat}{\mathbb{N}}
\def\elabel#1{\label{e:#1}}
\def\sq{$\Box$}
\def\qed{\ifmmode\sq\else{\unskip\nobreak\hfil
\penalty50\hskip1em\null\nobreak\hfil\sq
\parfillskip=0pt\finalhyphendemerits=0\endgraf}\fi\par\medbreak}
\newsavebox{\junk}
\savebox{\junk}[1.6mm]{\hbox{$|\!|\!|$}}
\def\til={{\widetilde =}}
 \def\eq#1/{(\ref{#1})}
\def\eq#1/{(\ref{e:#1})}
\newcommand{\beqn}[1]{\notes{#1}%
\begin{eqnarray} \elabel{#1}}
\newcommand{\eeqn}{\end{eqnarray} }
\newcommand{\beq}[1]{\notes{#1}%
\begin{equation}\elabel{#1}}
\newcommand{\eeq}{\end{equation}} 
\def\bdes{\begin{description}}
\def\edes{\end{description}}
\def\notes#1{}
\theoremstyle{plain} %default is 'plain' which italicizes body text
\newtheorem{thm}{Theorem}[section]
\newtheorem{prop}[thm]{Proposition}
\newtheorem{cor}[thm]{Corollary}
\newtheorem{lem}[thm]{Lemma}
\newtheorem{defn}[thm]{Definition}
\theoremstyle{remark} %default is 'plain' which italicizes body text
\newtheorem{rmk}[thm]{Remark}
\begin{document}

\title{The norm of the Fourier transform on compact or discrete abelian groups}
\author{Mokshay Madiman and Peng Xu\thanks{Both authors are with the Department of Mathematical Sciences, University of Delaware.
This work was supported in part by the U.S. National Science Foundation through grants CCF-1346564 and DMS-1409504 (CAREER).
Email: {\tt xpeng@udel.edu, madiman@udel.edu}}}
\maketitle

\begin{abstract}
We calculate the norm of the Fourier operator from $L^p(X)$ to $L^q(\hat{X})$ when $X$ is an infinite locally compact abelian group that is, furthermore, compact or discrete. This subsumes the sharp Hausdorff-Young inequality on such groups. In particular, we identify the region in $(p,q)$-space where the norm is infinite, generalizing a result of Fournier, and setting up a contrast with the case of finite abelian groups, where the norm was determined by Gilbert and Rzeszotnik. As an application, uncertainty principles on such groups expressed in terms of R\'enyi entropies are discussed.
\end{abstract}

%\tableofcontents

%\newpage
\section{Introduction}

%Let $G$ be a locally compact abelian (LCA) group,

The determination of the best constants in important inequalities of harmonic analysis, or equivalently the determination of the 
norm of important operators of harmonic analysis on appropriate spaces, has been an area of persistent investigation of decades. 
For instance, the norm of various variants of the Hardy-Littlewood maximal operator has attracted much attention. Indeed, 
the question of what this norm is for the centered Hardy-Littlewood maximal operator on $L^p(\mathbb{R}^d)$ for $p>1$ 
remains open even for dimension 1, even though Stein \cite{Ste82} proved that these norms are uniformly bounded as the 
dimension grows. As for the operator corresponding to the centered weak type (1,1) inequalities whose domain is $L^1(\mathbb{R}^d)$, 
the question of Stein and Stromberg \cite{SS83} about whether the norms are uniformly bounded in dimension is still unanswered, 
although Melas \cite{Mel03} determined the norm of this operator in dimension 1 in a culmination of years of effort by several authors. 
In a different line of investigation, the norm of the uncentered Hardy-Littlewood maximal operator was determined by Bernal \cite{Ber89} 
for the weak (1,1) transform in 1 dimension, and by Grafakos and Montgomery-Smith \cite{GM97} on $L^p(\mathbb{R}^d)$.

Arguably the most basic operator in harmonic analysis is the Fourier operator- the operator that takes a function to its Fourier transform, 
and it also is the common element in both real-variable harmonic analysis and the abstract theory of harmonic analysis on locally compact groups. 
Because of the nature of the Fourier transform, it is natural to ask not just about the boundedness of the Fourier operator on a given $L^p$ space, 
but for its boundedness as an operator from $L^p$ to $L^q$. The boundedness of the Fourier operator from $L^p(\mathbb{R}^d)$ to 
$L^{p'}(\mathbb{R}^d)$ with $p'$ the dual index to $p$ and $p\in (1,2]$, is precisely the content of the Hausdorff-Young inequality, and 
Beckner \cite{Bec75} obtained the norm of this operator in a celebrated paper. The more general question of the $(p,q)$-norm of the 
Fourier operator on $\mathbb{R}^d$ can be deduced from general results of Lieb \cite{Lie90} about best constants in a wide class of inequalities; 
indeed, it turns out that for $(p,q)$ either in the set $\{1< p\le 2, 1<q<\infty\}$ or in the set $\{1<p<\infty, 2\le q<\infty\}$, 
the $(p,q)$-norm is infinite unless $q= p'€™$. Somewhat surprisingly, however, until recently it does not appear that the norm of the 
Fourier operator had been explored in the abstract group setting, even though the question continues to make perfect sense there 
(even though maximal functions do not). The only work we are aware of beyond $\mathbb{R}^d$ is that of Gilbert and Rzeszotnik \cite{GR10}, 
who settled the determination of the $(p,q)$-norm of the Fourier operator for arbitrary finite abelian groups.

Our goal in this paper is to determine the $(p,q)$-norm of the Fourier operator for the larger class of compact or discrete abelian groups. 
Part of the motivation for this comes from the fact that the extremal functions obtained by Gilbert and Rzeszotnik for the case of 
finite abelian groups do not easily extend to cases where one does not have a discrete topology (e.g., the natural analogue of some of 
their extremal functions would be something akin to an Euler delta function, but these fail to be in the nice function spaces of interest 
and also are not straightforward to define and develop in the group setting where one cannot work as concretely as in $\mathbb{R}^d$ 
or finite groups). As a consequence, our proof techniques, while building on those of Gilbert and Rzeszotnik, are necessarily more involved, 
and for example rely on explicit constructions even in cases where Gilbert and Rzeszotnik were able to make existence arguments suffice.

For finite abelian groups $X$, the norm of the Fourier transform as an operator from $L^p(X)$ to $L^q(\hat{X})$ is clearly finite for all positive $p$ and $q$, and this
number is computed as earlier mentioned in \cite{GR10}. Perhaps the main surprise when dealing with the more general situations
of compact or discrete abelian groups is that, {\it provided the group is not finite}, there is a region in which the $(p,q)$-norm of the Fourier transform 
is infinite. More precisely, if $X$ is compact and infinite, the Fourier transform is only finite in the region $R_1$ described in Theorem~\ref{ThmXCom}
while if $X$ is discrete and infinite, the Fourier transform is only finite in the region $R'_2$ described in Theorem~\ref{ThmXDis}.
The regions $R_i$ that appear in Theorem~\ref{ThmXCom} are almost the same as the regions $R'_i$ that appear in Theorem~\ref{ThmXDis}
-- they differ only on their boundaries. In other words, denoting the interior of a set $A$ by $A^\circ$, we have
that $R_i^\circ=R_i^{'\circ}$ for $i=1, 2, 3$; these regions are shown in Figure~\ref{figure:2}.
A subset of our main results was developed earlier by Fournier \cite{Fou73}-- specifically, he showed that the norm of the Fourier 
transform is infinity in the region $\{1\le p\le 2, \frac{1}{p}+\frac{1}{q}>1\}$ for $X$ compact, and in the region $\{1\le p\le 2, \frac{1}{p}+\frac{1}{q}<1\}$ for $X$ discrete.
%our goal is to explore the explicit value of the operator norm for arbitrary $1\le p,q\le\infty$ like Gilbert and Rzeszotnik's paper.
We emphasize that our results cover all pairs $(p,q)$ in the positive quadrant of the extended plane (appropriately interpreted
when $p$ or $q$ are less than 1 and we are not dealing with a Banach space).
In this sense, the range of values we consider in this paper is more general than the range considered by \cite{GR10} for finite abelian groups, where
only the usual (norm) case of $p\geq 1, q\geq 1$ is considered.

Our proofs of the compact and discrete cases are distinct. It is conceivable that one may be able to use an argument 
based on duality to derive one from the other; however we found it more convenient to develop them separately,
%-- however, this would be complicated somewhat 
especially because, for the question to be well defined, 
we need to sometimes work with a subspace of $L^p(X)$ rather than the whole space (as explained in the second paragraph
of Section~\ref{sec:prelim}).
%
%Remarks on proof ideas
%
Let us note in passing that Theorems~\ref{ThmXCom} and \ref{ThmXDis} of course contain the 
Hausdorff-Young inequality (see, e.g., \cite{Bec75}) for compact or discrete LCA groups 
with the sharp constant $C_{p,q}=1$ for $\frac{1}{p}+\frac{1}{q}=1$. 

As is well known, the Hausdorff-Young inequality for $\RL^n$, used together with the associated sharp constant, yields on differentiation
a sharp uncertainty principle for the Fourier transform on $\RL^n$ expressed in terms of entropy; 
in particular, the Heisenberg uncertainty principle for position and momentum 
observables can be extracted as a consequence. For some parameter ranges, our results can analogously be interpreted in terms of certain
uncertainty principles for the Fourier transform on LCA groups.

%PUT figures here.

%\begin{figure}[H]
%	\centering
%	% You can scale your picture here
%	\begin{tikzpicture}[scale=1.5,dot/.style={draw,circle,inner sep=2pt,fill},lab/.style={outer sep=2pt,font=\large}]
%		\draw [thick,->](0,0)--(0,5);
%		\draw [thick,->](0,0)--(5,0);
%		%\draw (0,4)--(4,4);
%		%\draw (4,0)--(4,4);
%		\draw (2,2)--(4,0);
%		\draw (2,2)--(2,4);
%		\draw (2,2)--(0,2);
%		\node[dot] at (0,0) {} node[lab] at (0,0) [below left] {$0$};
%		\node[dot] at (2,0) {} node[lab] at (2,0) [below] {$\frac{1}{2}$};
%		\node[dot] at (4,0) {} node[lab] at (4,0) [below] {$1$} node[lab] at (5,0) [above right] {$\frac{1}{p}$};
%		\node[dot] at (0,2) {} node[lab] at (0,2) [left] {$\frac{1}{2}$};
%		%\node[dot] at (0,4) {} node[lab] at (0,4) [left] {$1$} 		
%		\node[lab] at (0,5) [above right] {$\frac{1}{q}$};
%		\node[dot] at (2,2) {};
%		%\node[dot] at (4,4) {};
%		%\node[dot] at (2,4) {};
%		\node[lab] at (1,3) {$R_3$};
%		
%		%\node[lab] at (3,2.5) {$R_2$};
%		\node[lab] at (4,2.1) {$R_2$};
%		\node[lab] at (1.6,1) {$R_1$};
%		
%	\end{tikzpicture}
%	\caption{$X$ compact LCA group}
%	\label{figure:1}
%\end{figure}
\begin{figure}[H]
	\centering
	% You can scale your picture here
	\begin{tikzpicture}[scale=1.5,dot/.style={draw,circle,inner sep=2pt,fill},lab/.style={outer sep=2pt,font=\large}]
		\draw [thick,->](0,0)--(0,5);
		\draw [thick,->](0,0)--(6,0);
		%\draw (0,4)--(2,4);
		%\draw (4,0)--(4,4);
		\draw (2,2)--(4,0);
		\draw (2,2)--(2,5);
		\draw (2,2)--(0,2);
		\node[dot] at (0,0) {} node[lab] at (0,0) [below left] {$0$};
		\node[dot] at (2,0) {} node[lab] at (2,0) [below] {$\frac{1}{2}$};
		\node[dot] at (4,0) {} node[lab] at (4,0) [below] {$1$} node[lab] at (6,0) [above right] {$\frac{1}{p}$};
		\node[dot] at (0,2) {} node[lab] at (0,2) [left] {$\frac{1}{2}$};
		%\node[dot] at (0,4) {} node[lab] at (0,4) [left] {$1$} 
		\node[lab] at (0,5) [above right] {$\frac{1}{q}$};
		\node[dot] at (2,2) {};
		%\node[dot] at (4,4) {};
		%\node[dot] at (2,4) {};
		\node[lab] at (1,3.5) {$R_3^\circ=R_3^{'\circ}$};
		
		%\node[lab] at (3,2.5) {$R_2'$};
		\node[lab] at (4,2.1) {$R_2^\circ=R_2^{'\circ}$};
		
		\node[lab] at (1.6,1) {$R_1^\circ=R_1^{'\circ}$};

	\end{tikzpicture}
	\caption{The three regions demarcated by the lines in the figure are the interiors of the regions that
	show up in Theorems~\ref{ThmXCom} and \ref{ThmXDis}. For an infinite LCA group $X$,
	the $(p,q)$-norm of the Fourier transform is finite precisely on $R_1$ when $X$ is compact
	and precisely on $R'_2$ when $X$ is discrete; in particular, it is infinite on the third region in either case. 
	%Whether $X$ is a compact or a discrete LCA group, the 
	%$(p,q)$-norm of the Fourier transform is infinite in R3 }
	}
	\label{figure:2}
\end{figure}
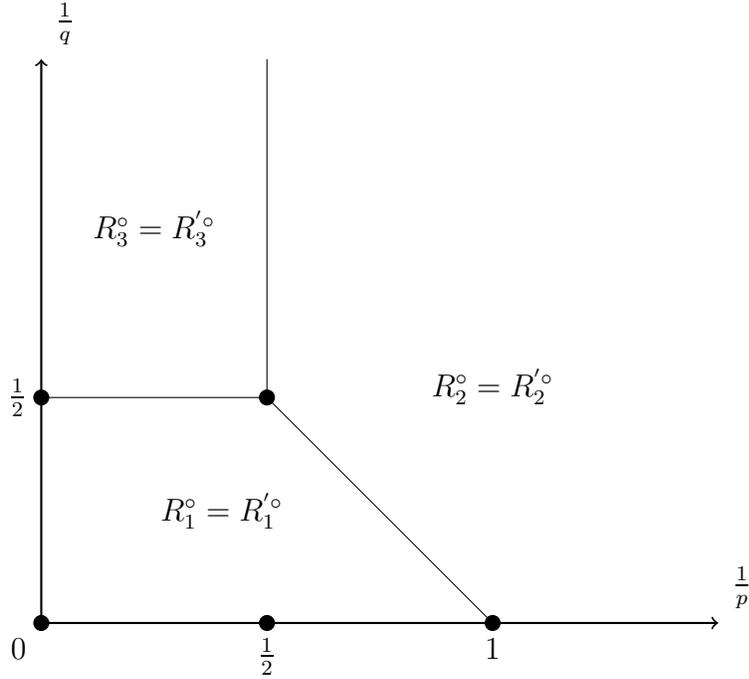

%\begin{figure}[H]
%	\centering
%	% You can scale your picture here
%	\begin{tikzpicture}[scale=1.5,dot/.style={draw,circle,inner sep=2pt,fill},lab/.style={outer sep=2pt,font=\large}]
%		\draw [thick,->](0,0)--(0,5);
%		\draw [thick,->](0,0)--(6,0);
%		%\draw (0,4)--(2,4);
%		%\draw (4,0)--(4,4);
%		\draw (2,2)--(4,0);
%		\draw (2,2)--(2,5);
%		\draw (2,2)--(0,2);
%		\node[dot] at (0,0) {} node[lab] at (0,0) [below left] {$0$};
%		\node[dot] at (2,0) {} node[lab] at (2,0) [below] {$\frac{1}{2}$};
%		\node[dot] at (4,0) {} node[lab] at (4,0) [below] {$1$} node[lab] at (6,0) [above right] {$\frac{1}{p}$};
%		\node[dot] at (0,2) {} node[lab] at (0,2) [left] {$\frac{1}{2}$};
%		%\node[dot] at (0,4) {} node[lab] at (0,4) [left] {$1$} 
%		\node[lab] at (0,5) [above right] {$\frac{1}{q}$};
%		\node[dot] at (2,2) {};
%		%\node[dot] at (4,4) {};
%		%\node[dot] at (2,4) {};
%		\node[lab] at (1,3.5) {$R_3'$};
%		
%		%\node[lab] at (3,2.5) {$R_2'$};
%		\node[lab] at (4,2.1) {$R_2'$};
%		
%		\node[lab] at (1.6,1) {$R_1'$};
%		
%		
%	\end{tikzpicture}
%	\caption{$X$ discrete LCA group}
%	\label{figure:2}
%\end{figure}

Section~\ref{sec:prelim} contains some preliminary material on abstract harmonic analysis as
well as probability that we will need to set notation and for our proofs.
Section~\ref{sec:compact} states and proves the main result for compact abelian groups, while
Section~\ref{sec:discrete} states and proves the main result for discrete abelian groups.

There is a large literature on uncertainty principles in both Euclidean and abstract settings. 
There are many different ways to express the intuition
that both a function and its Fourier transform cannot be simultaneously too concentrated. For the standard
setting of the Euclidean spaces $\RL^n$, these include the following formulations:
\begin{enumerate}
\item Hardy-type uncertainty principles: The simplest forms of these assert that if both $f$ and $\hat{f}$ are non-zero and sub-gaussian,
then the constants in the bounding Gaussian functions must be constrained. See, e.g., \cite{Har33, CP84, Hor91, BD06}.
\item Heisenberg-type uncertainty principles: These assert that the product of the variances of conjugate densities must be 
bounded from below by a positive constant. See, e.g., \cite{Ken27, Wey50:book, Rob29, Kra67, deB67, Chi76}.
\item Uncertainty principles \`a la Amrein-Berthier or Logvinenko-Sereda: These assert that if the energy of $f$ is largely concentrated in a ``small'' set $E$
and that of $\hat{f}$ is largely concentrated in a ``small'' set $F$, then $f$ itself must have small energy ($L^2$-norm). See, e.g., \cite{LS74, HJ95, Pan95, Pan98, Kov01} for 
statements where ``small'' means compact, and \cite{AB77, Naz96, Jam07} for statements where ``small'' means of finite Lebesgue measure. 
%Ben85, BD06 gives an alternate route to AB77
\item Entropic uncertainty principles: These assert that the sum of entropies of a density and its conjugate density
is bounded from below by a constant. See \cite{Hir57, Sta59, Bec75, DCT91} for various statements of this type involving Shannon entropy, 
\cite{DHPO05} for an exploration of extremals, and \cite{ZPV08} for extensions from Shannon entropy to the 
more general R\'enyi entropies (the latter two papers also discuss finite cyclic groups and the integers).
\end{enumerate}

Efforts have been made to find more general expressions of each of these forms of uncertainty principles
that can apply to settings more general than Euclidean spaces or to more general objects than the Fourier transform; 
a detailed discussion of the literature is beyond the scope of this paper but the references given above together 
with the surveys \cite{Fef83, FS97:1, HJ94:book, Tha04:book, Sen14} provide a starting point. 

In Section~\ref{sec:up}, we discuss the implications of our results for entropic uncertainty principles on compact or discrete abelian groups
(expressed using the general class of R\'enyi entropies), and compare with results that can be deduced directly from the Hausdorff-Young inequality.
Once again it is particularly interesting to note that there is a region of $(p,q)$-space in which the natural weighted uncertainty inequality
involving R\'enyi entropies of orders $p$ and $q$ does not hold.

Other papers that explore uncertainty principles in the setting of LCA groups include 
\cite{MS73, Smi90, Hog93} who study Amrein-Berthier-type phenomena, and \cite{OP04, Prz04} who study entropic uncertainty principles
expressed in terms of Shannon entropy. Note that it is unclear how to directly extend Heisenberg-type inequalities
involving the variance from $\RL^n$ to LCA groups where the moments of a group-valued random variable
are not well defined; this is one advantage of entropic uncertainty principles, which imply Heisenberg-type uncertainty
principles in the Euclidean case, but can be freely formulated for groups since the entropy is defined only in terms
of the density and not moments.

We mention in passing that it is, of course, of great interest to obtain norms of Fourier transforms, and sharp constants in uncertainty principles, 
for {\it nonabelian} locally compact groups (starting with the most common Lie groups) or other general structures such as metric measure spaces,
but this is a much harder project that we do not address at all. For the current state of knowledge on such questions,
the reader may  consult \cite{Chr04, Kan07, PT07, DT15, MM15} and references therein.

\section{Preliminaries}
\label{sec:prelim}

Since this section contains basic material on the Fourier transform on LCA groups that can be found in
textbooks (see, e.g., \cite{Rud62:book}), we do not include most of the proofs.

Let $X$ be a LCA group associated with a Haar measure $\alpha$. Let $\hat{X}:=\{\gamma:~X\rightarrow\mathbb{T}\}$ be its dual group (i.e. the set of continuous group homomorphism from $X$ to the unit circle $\mathbb{T}$ in complex plane). If $X$ is compact then $\hat{X}$ is discrete and the Haar measure $\hat{\alpha}$ on $\hat{X}$ is the counting measure. If $X$ is discrete then $\hat{X}$ is compact. Consider $1\le p,q\le \infty$ and the spaces $L^p(X)$ and $L^q(\hat{X})$ with the corresponding Haar measures. For an integrable function $f:X\rightarrow\mathbb{C}$, we define its Fourier transform:
\begin{eqnarray}
\hat{f}(\gamma)=\int_Xf(x)\gamma(-x)\alpha(dx)\label{FT}
\end{eqnarray}
Note that the Fourier transform is a linear transform and can be extended, in a unique manner, to an isometry of $L^2(X)$ onto $L^2(\hat{X})$. %(See \cite{Rei68:book}, p. 105 or \cite{Rud62:book}, p. 26). 
If $X$ is a compact LCA group, define the norm of Fourier transform from $L^1(X)\cap L^p(X)$ to $L^q(\hat{X})$ for $0< p\le \infty$ by
\be\label{FTNorm}
C_{p,q}:=\sup_{\|f\|_p=1}\|\hat{f}\|_q
\ee
Note that if $p\ge 1$, then $L^1(X)\cap L^p(X)=L^p(X)$; otherwise if $0<p<1$, then $L^1(X)\cap L^p(X)=L^1(X)$.
The reason why we define Fourier transform for functions on $L^1(X)\cap L^p(X)$ instead of $L^p(X)$ for $0<p<1$ is that there is no direct definition for Fourier transform of functions on $L^p(X)$ that are not integrable. Similarly, if $X$ is a discrete LCA group, define the norm of Fourier transform from $L^2(X)\cap L^p(X)$ to $L^q(\hat{X})$ for $0< p\le \infty$  by
\be\label{FTNormdisc}
C_{p,q}:=\sup_{\|f\|_p=1}\|\hat{f}\|_q
\ee
Note that if $1\le p\le 2$, then $L^2(X)\cap L^p(X)=L^p(X)$, otherwise if $p>2$, then $L^2(X)\cap L^p(X)=L^2(X)$. The goal of this paper is to explore the explicit value of the norm of Fourier operator for arbitrary $0< p,q\le\infty$.

\begin{defn}\label{InP}
Suppose that $X$ is an LCA group with a Haar measure $\alpha$, then for any $f$ and $g$ in $L^2(X)$, define the inner product on $L^2(X)$: 
\be\label{InnPro}
\langle f,g\rangle:=\int_X f(x)\overline{g(x)}\alpha(dx)
\ee
\end{defn}

\begin{prop}\label{Ortho}
Suppose that $(X,+)$ is a compact abelian group, then the elements of $(\hat{X},\cdot)$ form an orthogonal basis in $L^2(X)$. We call this basis the frequency basis of $X$. If $(X,\cdot)$ is a discrete abelian group, then all delta functions 
\begin{eqnarray}\label{TimBas}
\mathbb{I}_{x_0}(x):=\begin{cases}
1 &\mbox{if~}x=x_0\\
0 &\mbox{otherwise}
\end{cases}
\end{eqnarray}
form an orthonormal basis of $L^2(X)$, we call this basis the time basis of $L^2(X)$. 
\end{prop}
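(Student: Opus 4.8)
The plan is to handle the compact and discrete cases separately, in each verifying the relevant orthogonality relation by a direct computation against the Haar measure and then establishing completeness by an approximation argument. Consider first the compact case and its orthogonality. For two characters $\gamma_1,\gamma_2\in\hat{X}$, using that each takes values in $\mathbb{T}$ so that $\overline{\gamma_2(x)}=\gamma_2(-x)$, the inner product $\langle\gamma_1,\gamma_2\rangle=\int_X\gamma_1(x)\overline{\gamma_2(x)}\,\alpha(dx)$ reduces to $\int_X\gamma(x)\,\alpha(dx)$ with $\gamma=\gamma_1\gamma_2^{-1}$. The crucial computation is that $\int_X\gamma\,d\alpha=0$ whenever $\gamma$ is nontrivial: picking $x_0$ with $\gamma(x_0)\neq 1$ and using the translation invariance of $\alpha$ gives $\int_X\gamma(x)\,\alpha(dx)=\int_X\gamma(x+x_0)\,\alpha(dx)=\gamma(x_0)\int_X\gamma(x)\,\alpha(dx)$, forcing the integral to vanish. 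Setting $\gamma_1=\gamma_2$ instead yields $\langle\gamma,\gamma\rangle=\alpha(X)$, so distinct characters are pairwise orthogonal with common norm $\sqrt{\alpha(X)}$; this is the orthogonality half of the claim (the system being orthonormal under the normalized Haar measure).

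The real content is completeness, i.e.\ density of the linear span of $\hat{X}$ in $L^2(X)$. I would observe that finite linear combinations of characters form a subalgebra of $C(X)$ (a product of characters is again a character), containing the constants (the trivial character) and closed under conjugation (since $\overline{\gamma}=\gamma^{-1}\in\hat{X}$). Granting that $\hat{X}$ separates the points of $X$, the Stone--Weierstrass theorem makes this subalgebra uniformly dense in $C(X)$; since $X$ is compact, $\alpha$ is finite and uniform convergence implies $L^2$ convergence, while $C(X)$ is itself dense in $L^2(X)$. Chaining these inclusions gives density of the span, completing the compact case.

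For the discrete case the situation is elementary because the Haar measure is counting measure, so $L^2(X)=\ell^2(X)$. Orthonormality is immediate: $\langle\mathbb{I}_{x_0},\mathbb{I}_{x_1}\rangle=\sum_x\mathbb{I}_{x_0}(x)\mathbb{I}_{x_1}(x)$ equals $1$ if $x_0=x_1$ and $0$ otherwise. Completeness follows since any $f\in\ell^2(X)$ is approximated in norm by its truncations $\sum_{x\in F}f(x)\mathbb{I}_x$ over finite sets $F\uparrow X$, the squared error being $\sum_{x\notin F}|f(x)|^2\to 0$.

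The main obstacle is the point-separation property of $\hat{X}$ invoked in the compact case, namely that there are enough characters to distinguish the elements of $X$. This is not elementary: it is a manifestation of Pontryagin duality (equivalently, of the Gelfand theory of the group algebra $L^1(X)$), and I would invoke it as a standard fact of abstract harmonic analysis, citing \cite{Rud62:book}, rather than reproving it from scratch.
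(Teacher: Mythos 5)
Your proof is correct, and in fact the paper itself gives no proof of this proposition at all --- it is stated as background material with the proofs delegated to standard references (Rudin's \emph{Fourier analysis on groups}), so what you have written is essentially the textbook argument that the paper implicitly invokes. Your orthogonality computation (reducing $\langle\gamma_1,\gamma_2\rangle$ to $\int_X\gamma\,d\alpha$ for $\gamma=\gamma_1\gamma_2^{-1}$ and killing the integral by translation invariance) is the standard one, and your completeness argument via Stone--Weierstrass applied to the conjugation-closed subalgebra spanned by $\hat{X}$, followed by the chain of densities (uniform $\Rightarrow$ $L^2$ on a finite regular measure, $C(X)$ dense in $L^2(X)$), is the classical route. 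You are also right to isolate the point-separation property of $\hat{X}$ as the one genuinely deep ingredient: for a general compact abelian group this is a consequence of the Peter--Weyl theorem (or of Gelfand--Raikov/Pontryagin duality) and cannot be obtained by elementary means, so citing it is appropriate --- and no less self-contained than the paper, which cites the same source for the entire proposition. Two minor points worth noting. First, your compact-case conclusion correctly matches the statement: the characters are orthogonal with common norm $\sqrt{\alpha(X)}$, hence orthonormal only under the probability normalization, which is why the proposition says ``orthogonal basis'' there but ``orthonormal basis'' in the discrete case. Second, your discrete-case orthonormality implicitly takes the Haar measure to be counting measure with unit point masses; this is the only normalization under which the stated orthonormality is literally true (the paper's later Proposition on Haar measure normalization assigns point mass $\hat{\alpha}(\hat{X})^{-1}$), so your reading is the correct one, but it deserves the one-line remark you essentially already make.
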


%\begin{proof} [Proof of Proposition \ref{Ortho}]
%See %\cite{TAOBlog}  
%%{\verbatim https://terrytao.wordpress.com/2009/04/06/the-fourier-transform/}
%\end{proof}

\begin{defn}\label{PosDef}
Let $(X,+)$ be an LCA group, a function $f$ on $X$ is said to be positive-definite if
\ben
\sum_{i,j=1}^nc_i\overline{c_j}f(x_i-x_j)\ge 0
\een
holds for every choice of $x_i\in X$, $c_i\in \mathbb{C}$, $i,j\in\{1,\cdots,n\}$ and $n\in\mathbb{N}$. Denote $B(X)$ the set of positive-definite functions on $X$.
\end{defn}

\begin{thm}\label{InvFT}
Let $f\in B(X)\cap L^1(X)$, then $\hat{f}\in L^1(\hat{X})$ and,
the Haar measure $\hat{\alpha}$ on $\hat{X}$ can be normalized such that the following inversion formula holds for all $x\in X$:
\begin{eqnarray}\label{Invft}
f(x)=\int_{\hat{X}}\hat{f}(\gamma)\gamma(x)\hat{\alpha}(d\gamma)
\end{eqnarray}
\end{thm}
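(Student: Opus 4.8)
The plan is to recognize this as the classical Fourier inversion theorem for integrable positive-definite functions and to prove it by combining Bochner's representation theorem with an identification of the representing measure. Throughout I may assume $f$ is continuous, as is standard for positive-definite functions and as is needed for the formula \eqref{Invft} to hold at \emph{every} point $x$. The overall strategy is: represent $f$ as the Fourier--Stieltjes transform of a finite nonnegative measure $\mu$ on $\hat{X}$, and then show that $\mu$ is absolutely continuous with respect to a suitably normalized Haar measure $\hat{\alpha}$, with density exactly $\hat{f}$.

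First I would invoke Bochner's theorem: a continuous positive-definite $f\in B(X)$ can be written as $f(x)=\int_{\hat{X}}\gamma(x)\,d\mu(\gamma)$ for a unique finite nonnegative regular Borel measure $\mu$ on $\hat{X}$. Evaluating at the identity gives $\mu(\hat{X})=f(0)<\infty$, confirming finiteness. If I can then establish, for $f$ additionally in $L^1(X)$, that $d\mu=\hat{f}\,d\hat{\alpha}$, the Bochner formula becomes precisely the inversion \eqref{Invft}; moreover this identification delivers the remaining claims for free, since $\hat{f}=d\mu/d\hat{\alpha}\ge 0$ and $\|\hat{f}\|_{L^1(\hat{X})}=\mu(\hat{X})=f(0)<\infty$, so $\hat{f}\in L^1(\hat{X})$.

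To carry out the identification I would pair against a rich family of test functions. For $g$ continuous, compactly supported, with $\hat{g}\in L^1(\hat{X})$, I compute $\int_X f(x)\overline{g(x)}\,\alpha(dx)$ by substituting the Bochner representation and interchanging integrals (Fubini is justified by finiteness of $\mu$ and integrability of $g$); using $\overline{\gamma(x)}=\gamma(-x)$ and the definition \eqref{FT} this yields $\int_X f\overline{g}\,d\alpha=\int_{\hat{X}}\overline{\hat{g}(\gamma)}\,d\mu(\gamma)$. Matching this against the corresponding Parseval pairing $\int_{\hat{X}}\hat{f}\,\overline{\hat{g}}\,d\hat{\alpha}$ over a measure-separating family of such $\hat{g}$ forces $d\mu=\hat{f}\,d\hat{\alpha}$. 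The normalization clause in the statement reflects exactly that Haar measure on $\hat{X}$ is fixed only up to a positive scalar: one chooses $\hat{\alpha}$ once (the Plancherel, or dual, measure) so that this pairing holds, after which the same normalization works simultaneously for all $f$.

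The main obstacle is precisely this absolute-continuity step together with the consistent choice of $\hat{\alpha}$: showing that a single normalization linearizes the assignment $f\mapsto\mu$ into $f\mapsto\hat{f}\,d\hat{\alpha}$ is where the genuinely nontrivial abstract harmonic analysis (construction of the Plancherel measure) enters. For the two cases actually needed in this paper, however, these difficulties collapse onto the concrete bases of Proposition~\ref{Ortho}. When $X$ is compact and $\hat{X}$ discrete, positive-definiteness forces every Fourier coefficient $\hat{f}(\gamma)\ge 0$ (test the integrated form of the inequality in Definition~\ref{PosDef} against the characters), a Fej\'er/Ces\`aro approximate-identity argument gives $\sum_{\gamma}\hat{f}(\gamma)=f(0)<\infty$ by monotone convergence, so $\hat{f}\in\ell^1(\hat{X})=L^1(\hat{X})$ with $\hat{\alpha}$ the counting measure, and the series $\sum_{\gamma}\hat{f}(\gamma)\gamma(x)$ then converges absolutely and uniformly to the continuous function $f$. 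When $X$ is discrete and $\hat{X}$ compact, $\hat{f}$ is automatically continuous on the compact group $\hat{X}$, hence in $L^1(\hat{X})$, and inversion is elementary: $\int_{\hat{X}}\hat{f}(\gamma)\gamma(x)\,d\hat{\alpha}(\gamma)=\sum_{y}f(y)\int_{\hat{X}}\gamma(x-y)\,d\hat{\alpha}(\gamma)=f(x)$, using the orthogonality relation $\int_{\hat{X}}\gamma(z)\,d\hat{\alpha}(\gamma)=\mathbb{I}_{z=0}$ with $\hat{\alpha}$ the normalized Haar measure on $\hat{X}$ and Fubini justified by $f\in\ell^1(X)$.
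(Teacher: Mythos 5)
The paper never actually proves Theorem~\ref{InvFT}: it is stated as background, and the preamble of Section~\ref{sec:prelim} explicitly defers such proofs to textbooks, citing \cite{Rud62:book}. So the only benchmark is the classical proof there, and your outline follows its skeleton (Bochner's theorem, then identification of the representing measure $\mu$ with $\hat{f}\,d\hat{\alpha}$) --- but your mechanism for the identification is different and, as written, circular within this paper's own development. You propose to match $\int_X f\overline{g}\,d\alpha=\int_{\hat{X}}\overline{\hat{g}}\,d\mu$ against the Parseval pairing $\int_{\hat{X}}\hat{f}\,\overline{\hat{g}}\,d\hat{\alpha}$; but here (as in Rudin) Parseval/Plancherel --- Proposition~\ref{Unitary} --- is a \emph{consequence} of Theorem~\ref{InvFT} and of the normalization it induces (Proposition~\ref{HarMeas}), so it cannot be used to prove it unless one first develops the Plancherel theory by an independent route. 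Rudin avoids this with a different device: the consistency relation $\hat{g}\,d\mu_f=\hat{f}\,d\mu_g$ for pairs $f,g\in B(X)\cap L^1(X)$, from which $\hat{\alpha}$ is \emph{constructed} as a positive, translation-invariant functional via Riesz representation. You do flag that constructing the Plancherel measure is ``where the genuinely nontrivial abstract harmonic analysis enters,'' so your general-LCA argument should be read as an honest outline with the hard step missing, not as a proof of the theorem in the generality stated.

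Your fallback arguments for $X$ compact and $X$ discrete --- the only cases this paper ever uses --- are, by contrast, essentially complete, non-circular, and more elementary than the general theory. For compact $X$ you need only Proposition~\ref{Ortho}: positive-definiteness tested against characters gives $\hat{f}(\gamma)\ge 0$, an approximate-identity argument (really Fatou rather than monotone convergence) gives $\sum_\gamma\hat{f}(\gamma)\le f(0)<\infty$, and uniform convergence plus uniqueness of Fourier coefficients yields the inversion formula. For discrete $X$, Fubini reduces inversion to the orthogonality relation $\int_{\hat{X}}\gamma(z)\,\hat{\alpha}(d\gamma)=0$ for non-identity $z$; this silently requires that characters of $X$ separate points (so that $\gamma\mapsto\gamma(z)$ is a nontrivial character of $\hat{X}$), which you should state --- it is elementary for discrete abelian groups via divisibility of $\mathbb{T}$, but it is a real ingredient. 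Note also that in the discrete case positive-definiteness plays no role at all, and that your normalizations ($\hat{\alpha}$ counting in the compact case, $\hat{\alpha}(\hat{X})=1$ in the discrete case) implicitly take $\alpha$ to be a probability, resp.\ counting, measure; this is harmless, since choosing the normalization of $\hat{\alpha}$ is part of the conclusion, but it should be said and reconciled with what Proposition~\ref{HarMeas} later extracts from the theorem.
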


%\begin{proof} [Proof of Theorem \ref{InvFT}]
%See \cite{Rud62:book}, p.22. 
%\end{proof}

\begin{prop}\label{HarMeas}
Suppose that $X$ is a compact LCA group with a Haar measure $\alpha$. Suppose $\hat{X}$ is the dual group of $X$ with the Haar measure $\hat{\alpha}$ normalized in Theorem \ref{InvFT}, then $\hat{\alpha}$ is a counting measure on $\hat{X}$ such that for every $y\in \hat{X}$, $\hat{\alpha}(y)=\alpha(X)^{-1}$. On the other hand, if $X$ is a discrete LCA group with a Haar measure $\alpha$. Suppose $\hat{X}$ is the compact dual group of $X$ with the Haar measure $\hat{\alpha}$ normalized in Theorem \ref{InvFT}, then $\alpha$ is a counting measure such that for every $x\in X$, $\alpha(x)=\hat{\alpha}(\hat{X})^{-1}$.
\end{prop}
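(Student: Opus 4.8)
The statement is in essence a computation of the normalizing constant whose existence is asserted by the inversion formula (Theorem~\ref{InvFT}). The plan is to feed that formula a single, explicitly chosen test function $f\in B(X)\cap L^1(X)$ whose Fourier transform can be written down by hand; evaluating both sides of the inversion formula then pins down $\hat\alpha$. For the compact case the natural choice of $f$ is a character, and for the discrete case it is the point mass at the identity.

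\textbf{Compact case.} I would fix a character $\gamma_0\in\hat X$ and take $f=\gamma_0$. First I would verify $f\in B(X)\cap L^1(X)$: integrability is immediate since $|\gamma_0|\equiv 1$ and $\alpha(X)<\infty$, and positive-definiteness (Definition~\ref{PosDef}) follows from $\sum_{i,j}c_i\overline{c_j}\gamma_0(x_i-x_j)=\big|\sum_i c_i\gamma_0(x_i)\big|^2\ge 0$. Next, using $\gamma(-x)=\overline{\gamma(x)}$, I compute $\hat{\gamma_0}(\gamma)=\langle\gamma_0,\gamma\rangle$, which by orthogonality of the characters (Proposition~\ref{Ortho}) equals $\alpha(X)$ when $\gamma=\gamma_0$ and $0$ otherwise; hence $\hat{\gamma_0}=\alpha(X)\,\mathbb{I}_{\gamma_0}$, which lies in $L^1(\hat X)$ since $\hat X$ is discrete. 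Plugging this into the inversion formula, the integral over the discrete group $\hat X$ collapses to its single nonzero term, giving $\gamma_0(x)=\alpha(X)\,\hat\alpha(\{\gamma_0\})\,\gamma_0(x)$ for every $x$. Since $\gamma_0$ never vanishes, this forces $\hat\alpha(\{\gamma_0\})=\alpha(X)^{-1}$, and as $\gamma_0$ was arbitrary, $\hat\alpha$ assigns mass $\alpha(X)^{-1}$ to every point.

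\textbf{Discrete case.} Here I would take $f=\mathbb{I}_0$, the indicator of the identity $0\in X$. It lies in $L^1(X)$ (its norm being $\alpha(\{0\})<\infty$) and is positive-definite, since on distinct points $\mathbb{I}_0(x_i-x_j)=\delta_{ij}$ makes the defining sum equal $\sum_i|c_i|^2\ge 0$. Its transform is the constant $\hat{\mathbb{I}_0}(\gamma)=\alpha(\{0\})\gamma(0)=\alpha(\{0\})$, which is in $L^1(\hat X)$ because $\hat X$ is compact. Evaluating the inversion formula at $x=0$ then gives $1=\mathbb{I}_0(0)=\alpha(\{0\})\,\hat\alpha(\hat X)$, so $\alpha(\{0\})=\hat\alpha(\hat X)^{-1}$. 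Finally, translation invariance of the Haar measure $\alpha$ on the discrete group $X$ yields $\alpha(\{x\})=\alpha(\{0\})$ for all $x$, so $\alpha$ is a multiple of counting measure with the stated value.

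\textbf{Main obstacle.} Both computations are short once the test functions are chosen, so the real content of the proof lies precisely in the choice of $f$: it must simultaneously be positive-definite and integrable (so that Theorem~\ref{InvFT} applies) and have a transform simple enough to read off the normalization directly. The one step requiring genuine care is the orthogonality relation for characters in the compact case---equivalently, that the integral of a nontrivial character vanishes, which I would derive from translation invariance of $\alpha$ rather than take for granted; and in the discrete case, the equality of all point masses should likewise be obtained from translation invariance, not assumed, since the proposition \emph{asserts} (and does not merely use) that $\alpha$ is a counting measure.
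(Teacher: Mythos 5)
Your proposal is correct and takes essentially the same route as the paper's own proof: in both, the compact case is handled by feeding a character $\gamma_0$ (verified to be positive-definite and integrable) into the inversion formula to get $\gamma_0(x)=\alpha(X)\,\hat\alpha(\{\gamma_0\})\,\gamma_0(x)$, and the discrete case by feeding in the point mass $\mathbb{I}_e$ at the identity to get $1=\alpha(\{e\})\,\hat\alpha(\hat X)$, with translation invariance finishing the discrete case. The only minor difference is that the paper verifies positive-definiteness of $\mathbb{I}_e$ for possibly repeated points $x_i$ by grouping indices into classes, whereas you assume the points distinct --- a reduction that is immediate by merging the coefficients attached to equal points.
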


\begin{proof}
If $X$ is compact, then for every element $\gamma\in\hat{X}$, $\gamma$ is a continuous homeomorphism from $X$ to $\mathbb{T}$. Then the Fourier transform, $\hat{\gamma}$, of $\gamma$ is a delta function on $\hat{X}$
\begin{eqnarray}\label{Dta}
\hat{\gamma}(y)=\begin{cases}
\alpha(X) &\mbox{if}~y=\gamma\\
0 &\mbox{if}~y\neq\gamma
\end{cases}
\end{eqnarray}
by the definition of Fourier transform \ref{FT} and Proposition  \ref{Ortho}. On the other hand, it is easy to verify that $\gamma\in B(X)$, therefore by inversion Fourier transform (\ref{Invft}), we have
\begin{eqnarray}
\gamma(x)=\int_{\hat{X}}\hat{\gamma}(y)y(x)\hat{\alpha}(dy)=\alpha(X)\hat{\alpha}(\gamma)\gamma(x)
\end{eqnarray}
therefore we have $\hat{\alpha}(\gamma)=\big(\alpha(X)\big)^{-1}$ for every $\gamma\in\hat{X}$, which provides the conclusion. On the other hand, if $X$ is discrete, then it suffices to prove that $\alpha(e)=\hat{\alpha}(\hat{X})^{-1}$ ($e$ is the identity in $X$) by the fact that $\alpha$ is a Haar measure and that $X$ is discrete. Define a function $f$ on $X$ by
\begin{eqnarray}
f:=\mathbb{I}_{e}(x):=\begin{cases}
1 &\mbox{if~} x=e\\
0 &\mbox{otherwise}
\end{cases}
\end{eqnarray}
We claim that $f\in B(X)$. In fact, for every choice of $x_i\in X$, $c_i\in\mathbb{C}$, $i,j\in\{1,\cdots,n\}$ and $n\in\mathbb{N}$, we divide $\{1,\cdots,n\}$ into several classes through the values of $\{x_i\}$ (since $x_i$'s are not necessarily distinct), i.e. we divide $\{1,\cdots,n\}$ into $m$ classes with $m\le n$: $A_1,A_2,\cdots,A_m$ such that if $i,j\in A_k$ with $k\le m$, then $x_i=x_j$, if else $i\in A_{k_1}$ and $j\in A_{k_2}$ with $k_1\neq k_2$, then $x_i\neq x_j$. We claim that
\be\label{Eq:class}
\sum_{i,j=1}^nc_i\overline{c_j}f(x_i-x_j)=\sum_{k=1}^m\sum_{i,j\in A_k}c_i\overline{c_j}f(x_i-x_j)
\ee
To see this, note that if $i$ and $j$ come from different $A_k$'s, then $x_i\neq x_j$, therefore by the definition of $f$, we have $f(x_i-x_j)\equiv 0$, which provides (\ref{Eq:class}). Thus it suffices to prove that
\begin{eqnarray}
\sum_{i,j\in A_k}c_i\overline{c_j}f(x_i-x_j)\ge 0
\end{eqnarray}
holds for every $k\in \{1,\cdots,m\}$. We have
\begin{eqnarray*}
\sum_{i,j\in A_k}c_i\overline{c_j}f(x_i-x_j)=\sum_{i,j\in A_k}c_i\overline{c_j}=\Big(\sum_{i\in A_k}c_i\Big)\overline{\Big(\sum_{i\in A_k}c_i\Big)}\ge 0
\end{eqnarray*}
which provides the claim that $f\in B(X)$. Also it is easy to verify that $f\in L^1(X)$, thus the inverse Fourier transform holds by Theorem \ref{InvFT}. We have, for every $\gamma\in\hat{X}$, $\hat{f}(\gamma)\equiv \alpha(e)$. By \ref{Invft}, we have 
\ben
1=f(e)=\int_{\hat{X}}\hat{f}(\gamma)\gamma(e)\hat{\alpha}(d\gamma)=\alpha(e)\hat{\alpha}(\hat{X})
\een
which provides Proposition \ref{HarMeas}. 
\end{proof}

\begin{prop}\label{Unitary}
Suppose that $X$ is either a compact or a discrete LCA group with a Haar measure $\alpha$, then under the normalization of $\hat{\alpha}$ by Proposition \ref{HarMeas}, we have that the Parseval's identity holds for every function $f\in L^2(X)$:
\be\label{Uni}
\|f\|_2=\|\hat{f}\|_2
\ee
\end{prop}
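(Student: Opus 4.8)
The plan is to prove Parseval's identity (Proposition \ref{Unitary}) from the inversion formula (Theorem \ref{InvFT}) together with the Haar measure normalization established in Proposition \ref{HarMeas}. The natural strategy is a two-step approach: first establish the identity on a suitable dense subclass of $L^2(X)$ where the inversion formula applies directly, then extend by density to all of $L^2(X)$.

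First I would reduce to functions for which everything is defined concretely. The cleanest route is to work with functions of the form $h=f*\tilde{f}$, where $\tilde{f}(x):=\overline{f(-x)}$ and $*$ denotes convolution, for $f\in L^1(X)\cap L^2(X)$. The key point is that such $h$ is positive-definite (this is the standard Bochner-type computation: $\sum_{i,j}c_i\overline{c_j}h(x_i-x_j)=\|\sum_i \overline{c_i}\,\tau_{x_i}f\|_2^2\ge 0$ using translation operators $\tau$) and lies in $L^1(X)$, so $h\in B(X)\cap L^1(X)$ and Theorem \ref{InvFT} applies. Under the Fourier transform, convolution becomes multiplication and the tilde operation becomes conjugation, so $\hat{h}(\gamma)=|\hat{f}(\gamma)|^2\ge 0$. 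Evaluating the inversion formula \eqref{Invft} at $x=0$ (the group identity) gives
\ben
h(0)=\int_{\hat{X}}\hat{h}(\gamma)\,\hat{\alpha}(d\gamma)=\int_{\hat{X}}|\hat{f}(\gamma)|^2\,\hat{\alpha}(d\gamma)=\|\hat{f}\|_2^2,
\een
while a direct computation of the convolution at the identity yields $h(0)=\int_X f(x)\overline{f(x)}\,\alpha(dx)=\|f\|_2^2$. This establishes $\|f\|_2=\|\hat{f}\|_2$ for all $f\in L^1(X)\cap L^2(X)$. The role of Proposition \ref{HarMeas} here is exactly to guarantee that $\hat{\alpha}$ has been normalized consistently with \eqref{Invft}, so no stray constant appears.

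Next I would extend from $L^1(X)\cap L^2(X)$ to all of $L^2(X)$. In both the compact and the discrete case, $L^1(X)\cap L^2(X)$ is dense in $L^2(X)$: when $X$ is compact the constant function bound forces $L^2(X)\subseteq L^1(X)$ outright, and when $X$ is discrete the finitely-supported functions are dense and automatically integrable. Since the Fourier transform is an isometry on the dense subspace and $L^2$ is complete, it extends uniquely to a linear isometry on all of $L^2(X)$, so \eqref{Uni} holds everywhere; this matches the remark in the preliminaries that the Fourier transform extends to an isometry of $L^2(X)$ onto $L^2(\hat{X})$.

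The main obstacle I anticipate is the verification that $h=f*\tilde{f}$ genuinely satisfies $h\in B(X)\cap L^1(X)$ with the clean transform identity $\hat{h}=|\hat{f}|^2$, since this requires care about when convolution is well-defined and integrable on a general LCA group and about interchanging the Fourier transform with convolution. A cleaner alternative that sidesteps convolutions entirely is to use the polarization-free shortcut: apply the argument above only to nonnegative-definite building blocks already available, namely (in the compact case) finite linear combinations of characters $\gamma\in\hat{X}$, whose orthogonality is given by Proposition \ref{Ortho} and whose Haar normalization is pinned down by \eqref{Dta} in the proof of Proposition \ref{HarMeas}, and (in the discrete case) finitely-supported functions built from the delta functions $\mathbb{I}_{x_0}$, again using Proposition \ref{Ortho}. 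For these explicit orthogonal systems the identity $\|f\|_2^2=\|\hat{f}\|_2^2$ reduces to comparing the squared coefficient sums weighted by the two Haar measures, and the constant $\alpha(X)^{-1}$ (respectively $\hat{\alpha}(\hat{X})^{-1}$) from Proposition \ref{HarMeas} makes the two sides match term by term; density then finishes the argument as before. I would likely present this explicit-basis version, since it relies only on results already proved in the excerpt and avoids developing convolution theory.
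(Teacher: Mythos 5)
The paper gives no proof of Proposition \ref{Unitary}: it is presented as standard background, with Section \ref{sec:prelim} explicitly deferring such proofs to textbooks (Rudin's \emph{Fourier analysis on groups}). Your proposal is correct, and your primary argument --- setting $h=f*\tilde{f}$ for $f\in L^1(X)\cap L^2(X)$, verifying $h\in B(X)\cap L^1(X)$ and $\hat{h}=|\hat{f}|^2$, evaluating the inversion formula (\ref{Invft}) at the identity to get $\|f\|_2^2=h(0)=\|\hat{f}\|_2^2$, and then extending by density --- is precisely the classical Plancherel argument in that reference, so you have in effect reconstructed the proof the authors had in mind. Two small points are worth making explicit if you write this up. First, you should record that $h=f*\tilde{f}$ is \emph{continuous} (a convolution of two $L^2$ functions, using continuity of translation in $L^2$); this is what makes the pointwise value $h(0)$ unambiguous, and it is a hypothesis that rigorous versions of Theorem \ref{InvFT} require, even though the paper's Definition \ref{PosDef} of $B(X)$ does not mention continuity. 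Second, your alternative basis route is fine in the compact case (it uses only Proposition \ref{Ortho}, equation (\ref{Dta}) and Proposition \ref{HarMeas}), but in the discrete case the orthogonality computation for $\|\hat{f}\|_2$ requires knowing that for $x\neq e$ the evaluation character $\gamma\mapsto\gamma(x)$ is non-trivial on $\hat{X}$, i.e.\ that characters separate points of $X$; this is a genuine duality fact not contained in Proposition \ref{Ortho} as stated, and it is a reason to prefer the convolution route, which avoids it entirely. The density step itself is unproblematic in both cases: when $X$ is compact, $L^2(X)\subseteq L^1(X)$ so no extension is needed, and when $X$ is discrete, the isometric extension from the finitely supported (hence integrable) functions is exactly how $\hat{f}$ is defined on $L^2(X)$ in the first place.
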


%\begin{proof}
%Note that by Proposition \ref{Ortho} and the fact that the Fourier transform of the frequency basis in $L^2(X)$ is the time basis is $L^2(\hat{X})$ by (\ref{Dta}), it suffices to verify that (\ref{Uni}) holds for all frequency basis of $L^2(X)$ or time basis of $L^2(X)$ for $X$ compact or discrete respectively. If $X$ is compact, then any frequency basis $\gamma\in\hat{X}$, we have that $\|\gamma\|_2=\alpha(X)^{1/2}$. On the other hand, by (\ref{Dta}) and Proposition \ref{HarMeas}, $\|\hat{\gamma}\|=\alpha(X)^{1/2}=\|\gamma\|_2$, which provides the conclusion for compact $X$. Furthermore, if $X$ is a discrete LCA group, then the $L^2$ norm of any time basis $\mathbb{I}_{x_0}$ defined in (\ref{TimBas}) is $\alpha(x_0)^{1/2}$. On the other hand, the Fourier transform of $\mathbb{I}_{x_0}$ is $\alpha(x_0)x_0^{-1}$ where we treat $x_0^{-1}$ as an element in $\hat{\hat{X}}$ (which is isomorphic to $X$ with the same Haar measure). Thus the $L^2$ norm of $\alpha(x_0)x_0^{-1}$ is $(\hat{\alpha}(\hat{X})\alpha(x_0)^2)^{1/2}=\alpha(x_0)^{1/2}=\|\mathbb{I}_{x_0}\|_2$ by Proposition \ref{HarMeas}. 
%\end{proof}

\begin{thm}\label{thm:fouriertwice}
Suppose that $X$ is a compact LCA group with a Haar measure $\alpha$. Let $\hat{X}$ be its dual group with the Haar measure $\hat{\alpha}$ normalized so that the Plancherel identity is valid. Then if a function $f$ whose Fourier transform $\hat{f}\in L^{p}(\hat{X})$ for some $p\in (0,2]$, then $\hat{\hat{f}}(x)=f(-x)$. On the other hand, if $X$ is a discrete LCA group and $f$ a function whose Fourier transform $\hat{f}\in L^p(\hat{X})$ for some $p\in [2,\infty]$, then $\hat{\hat{f}}(x)=f(-x)$.
\end{thm}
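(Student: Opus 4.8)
The plan is to reduce both cases to the single operator identity $\mathcal{F}^2 = R$ on $L^2$, where $\mathcal{F}$ denotes the Fourier operator and $R$ is the reflection $(Rf)(x) = f(-x)$, and then to promote this $L^2$ statement to the stated hypotheses by checking that in each case the hypothesis forces both $\hat f \in L^2$ and $f \in L^2$. Throughout I use Pontryagin duality to identify $\hat{\hat X}$ with $X$, so that $\hat{\hat f}$ is again a function on $X$ and $\mathcal{F}^2$ is a bounded operator $L^2(X)\to L^2(X)$.

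First I would prove $\mathcal{F}^2 = R$ on $L^2(X)$ for $X$ compact by checking it on a spanning set. By Proposition \ref{Ortho} the characters span a dense subspace of $L^2(X)$, so it suffices to verify $\mathcal{F}^2\gamma = R\gamma$ for each $\gamma\in\hat X$. Equation (\ref{Dta}) in the proof of Proposition \ref{HarMeas} gives $\hat\gamma = \alpha(X)\,\mathbb{I}_\gamma$ on the discrete group $\hat X$, and a direct computation of the Fourier transform of this finitely supported function, using $\hat\alpha(\gamma) = \alpha(X)^{-1}$ from Proposition \ref{HarMeas}, yields $\mathcal{F}\hat\gamma(x) = \gamma(-x) = (R\gamma)(x)$. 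Since $\mathcal{F}$ is an isometry onto by Proposition \ref{Unitary} (hence $\mathcal{F}^2$ is bounded) and $R$ is an isometry, agreement on the dense span of characters gives $\mathcal{F}^2 = R$ on all of $L^2(X)$. The discrete case is identical after replacing the frequency basis by the time basis of Proposition \ref{Ortho}: one computes $\mathcal{F}^2\,\mathbb{I}_{x_0} = \mathbb{I}_{-x_0} = R\,\mathbb{I}_{x_0}$ using the normalization $\alpha(x) = \hat\alpha(\hat X)^{-1}$ of Proposition \ref{HarMeas}, and extends by density.

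Next I would verify that the function-space bookkeeping places the problem inside $L^2$. For $X$ compact, $\hat X$ carries a uniform-atom discrete measure, so $L^p(\hat X)\subseteq L^2(\hat X)$ for every $p\in(0,2]$; thus the hypothesis gives $\hat f\in L^2(\hat X)$, which simultaneously makes $\hat{\hat f}$ well-defined (matching the $L^2$ image $\mathcal{F}\hat f$) and produces, via surjectivity in Proposition \ref{Unitary}, a unique $g\in L^2(X)$ with $\hat g = \hat f$. For $X$ discrete, $\hat X$ is compact of finite measure, so $L^p(\hat X)\subseteq L^2(\hat X)$ for every $p\in[2,\infty]$, giving again $\hat f\in L^2(\hat X)$; here $f$ already lies in $L^2(X)$ (this is what it means for $\hat f$ to be defined in the discrete convention), so injectivity of the $L^2$ isometry immediately gives $f = g$, and hence $\hat{\hat f} = \mathcal{F}^2 f = Rf$ as desired.

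The one genuinely nontrivial step is the identification $f = g$ in the compact case, where a priori $f$ is known only to lie in $L^1(X)$ rather than $L^2(X)$. Here $g\in L^2(X)\subseteq L^1(X)$ and $\widehat{(f-g)} = 0$, so I would invoke injectivity of the Fourier transform on $L^1(X)$ for compact $X$: since $\int_X (f-g)\overline{\gamma}\,\alpha(dx) = \widehat{(f-g)}(\gamma) = 0$ for every character $\gamma$, and finite linear combinations of characters are uniformly dense in $C(X)$ by Stone--Weierstrass (the characters separate points, contain the constants, and are closed under products and conjugation), the finite measure $(f-g)\,\alpha$ annihilates $C(X)$, whence $f = g$ a.e. Thus $f\in L^2(X)$, and applying $\mathcal{F}^2 = R$ gives $\hat{\hat f}(x) = f(-x)$. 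I expect this injectivity argument to be the main obstacle, since it is the only place where the weaker compact-case hypothesis ($f\in L^1$ rather than $L^2$) must be upgraded before the $L^2$ machinery applies.
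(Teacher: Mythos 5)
Your proof is correct; note, however, that the paper itself never proves Theorem~\ref{thm:fouriertwice} --- it sits in Section~\ref{sec:prelim}, which the authors explicitly preface by saying that proofs of such standard material are omitted (deferring to textbooks such as \cite{Rud62:book}). So the comparison is with the standard theory rather than with an in-paper argument, and what your write-up buys is a self-contained derivation from the paper's own ingredients. The basis computations $\mathcal{F}^2\gamma = R\gamma$ and $\mathcal{F}^2\mathbb{I}_{x_0} = \mathbb{I}_{-x_0}$ use exactly (\ref{Dta}) and the normalizations of Proposition~\ref{HarMeas}; density of the frequency (resp.\ time) basis from Proposition~\ref{Ortho}, together with boundedness of $\mathcal{F}^2$ and $R$ on $L^2$ via Proposition~\ref{Unitary}, then gives the operator identity $\mathcal{F}^2=R$ on $L^2$; and the inclusions $L^p(\hat X)\subseteq L^2(\hat X)$ (discrete target with atoms of equal positive mass for $p\le 2$; compact target of finite measure for $p\ge 2$) are both right. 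You also correctly isolated the one genuinely nontrivial point, the upgrade from $f\in L^1(X)$ to $f\in L^2(X)$ in the compact case, and your Stone--Weierstrass argument for injectivity of the Fourier transform on $L^1(X)$ is valid: the trigonometric polynomials form a self-adjoint, unital, point-separating subalgebra of $C(X)$, so the finite complex measure $(f-g)\,d\alpha$, which annihilates every character and hence every trigonometric polynomial, annihilates $C(X)$ and is therefore zero. Two small points worth flagging, neither a gap: first, the conclusion $\hat{\hat f}(x)=f(-x)$ should be read as equality almost everywhere, since for $p\in(1,2]$ in the compact case $\hat f$ need not lie in $L^1(\hat X)$ and $\hat{\hat f}$ is then defined only as an $L^2$ element; second, both your Stone--Weierstrass step and your discrete-case orthogonality computation (where you need $\gamma\mapsto\gamma(z)$ to be a nontrivial character of $\hat X$ whenever $z\neq e$) silently invoke the fact that characters separate points of an LCA group, i.e.\ Pontryagin duality --- this is standard, but it is the one external input your otherwise self-contained argument rests on, and it deserves an explicit citation.
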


The crucial idea to calculate all the finite operator norms in this paper, as in the paper of Gilbert and Rzeszotnik \cite{GR10}, 
is the Riesz-Thorin theorem. %(see, e.g., \cite{}).

\begin{thm}\label{RiesT}
\textbf{(Riesz-Thorin Convexity Theorem)} Define $K(1/p,1/q):=C_{p,q}$, where $C_{p,q}$ is the norm of Fourier operator defined in (\ref{FTNorm}), then $\log K(x,y)$ is a convex function on $[0,\infty)\times[0,1]$.
\end{thm}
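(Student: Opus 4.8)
The plan is to observe that the asserted two-variable log-convexity of $K$ is precisely the Riesz--Thorin interpolation inequality in disguise, and to prove it by the classical three-lines argument, paying attention to the sub-normed range $0<p<1$ (that is, $x=1/p>1$) that lies outside the textbook setting. A function on the convex set $[0,\infty)\times[0,1]$ is convex if and only if its restriction to every line segment in the set is convex, so it suffices to fix endpoints $(1/p_0,1/q_0)$ and $(1/p_1,1/q_1)$ and $\theta\in[0,1]$, put $1/p_\theta=(1-\theta)/p_0+\theta/p_1$ and $1/q_\theta=(1-\theta)/q_0+\theta/q_1$, and establish
\[
C_{p_\theta,q_\theta}\le C_{p_0,q_0}^{\,1-\theta}\,C_{p_1,q_1}^{\,\theta}.
\]
Taking logarithms turns this into segment convexity of $\log K$, and since it holds for all pairs of endpoints it yields convexity on the whole domain.

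First I would reduce to a dense class of test functions. Because $1/q_\theta\in[0,1]$ forces $q_\theta\ge 1$, the target $L^{q_\theta}(\hat X)$ is a genuine Banach space and duality gives $\|\hat f\|_{q_\theta}=\sup\{\,|\int_{\hat X}\hat f\,g\,d\hat\alpha|:\|g\|_{q_\theta'}\le 1\,\}$, where I use the bilinear pairing $\int\hat f\,g\,d\hat\alpha$ in place of (\ref{InnPro}) so as to keep the forthcoming function holomorphic. Thus it is enough to bound $|\int_{\hat X}\hat f\,g\,d\hat\alpha|$ for simple $f$ on $X$ (normalized so $\|f\|_{p_\theta}=1$) and simple $g$ on $\hat X$ (normalized so $\|g\|_{q_\theta'}=1$); for such $f$ the transform $\hat f$ is unambiguous (and in the discrete case lies in the $L^2$-isometry of Proposition~\ref{Unitary}) and all integrals are finite sums. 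Next I would build the analytic family: writing $f=|f|e^{i\varphi}$ and $g=|g|e^{i\psi}$, set $f_z=|f|^{\,p_\theta((1-z)/p_0+z/p_1)}e^{i\varphi}$ and $g_z=|g|^{\,q_\theta'((1-z)/q_0'+z/q_1')}e^{i\psi}$, and define $F(z)=\int_{\hat X}\widehat{f_z}\,g_z\,d\hat\alpha$. For simple $f,g$ this $F$ is a finite exponential sum in $z$, hence entire and bounded on the closed strip $0\le\Re z\le 1$, and a direct check gives $f_\theta=f$, $g_\theta=g$, so $F(\theta)=\int\hat f\,g\,d\hat\alpha$.

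Then I would apply Hadamard's three-lines theorem. On $\Re z=0$ one computes $\|f_{iy}\|_{p_0}=\big(\int_X|f|^{p_\theta}\,d\alpha\big)^{1/p_0}\cdots=1$ and likewise $\|g_{iy}\|_{q_0'}=1$, so by the definition (\ref{FTNorm}) of the operator norm together with H\"older's inequality, $|F(iy)|\le\|\widehat{f_{iy}}\|_{q_0}\|g_{iy}\|_{q_0'}\le C_{p_0,q_0}$; symmetrically $|F(1+iy)|\le C_{p_1,q_1}$. The three-lines theorem then gives $|F(\theta)|\le C_{p_0,q_0}^{1-\theta}C_{p_1,q_1}^{\theta}$, and taking the supremum over admissible $f,g$ produces the interpolation inequality. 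The boundary exponents $p=\infty$ ($x=0$) and $q\in\{1,\infty\}$ ($y\in\{0,1\}$) are recovered by the usual modifications (an essential supremum in place of an $L^q$ integral, or a limiting argument), and convexity on the closed domain follows since the segment estimate is stable under these limits. The same argument, reading (\ref{FTNormdisc}) for (\ref{FTNorm}), covers the discrete case without change.

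The step I expect to be the main obstacle --- and the only place this statement genuinely departs from the standard Riesz--Thorin theorem --- is the extension to $0<p<1$, where $\|\cdot\|_p$ is only a quasi-norm and $L^p(X)$ is not a Banach space. The point I would stress is that the complex-interpolation machinery is \emph{asymmetric}: subadditivity of the norm and duality are invoked only on the target side, which here always satisfies $q\ge 1$, whereas the source side enters solely through the identities $\|f_{iy}\|_{p_0}^{p_0}=\int_X|f|^{p_\theta}\,d\alpha=\|f_{1+iy}\|_{p_1}^{p_1}$, valid for \emph{every} $p_0,p_1>0$ and never using the triangle inequality in $L^p$. Hence the three-lines bound survives verbatim for all $0<p\le\infty$ as long as $q\ge 1$, which is exactly the region $[0,\infty)\times[0,1]$. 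One should finally confirm that the domain conventions of (\ref{FTNorm}) --- namely $L^1(X)\cap L^p(X)=L^1(X)$ for $p<1$ in the compact case, and the analogous $L^2\cap L^p$ convention in the discrete case --- guarantee that the simple test functions indeed lie in the space on which $C_{p,q}$ is defined, so that no mass is lost in passing from the dense class back to the supremum.
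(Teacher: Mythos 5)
Your proof is correct, and it is essentially the argument the paper relies on: the paper states this result without proof as standard background (see the remark at the start of Section~\ref{sec:prelim} that proofs of textbook material are omitted), the result being the classical Riesz--Thorin/Thorin convexity theorem, whose standard proof is exactly your three-lines construction. Your central observation --- that the complex-interpolation machinery invokes duality and the triangle inequality only on the target side, where $1/q\in[0,1]$ forces $q\ge 1$, while the source side enters only through the exponent identity $\|f_{iy}\|_{p_0}^{p_0}=\int_X|f|^{p_\theta}\,d\alpha$, valid for all $p_0,p_1>0$ --- is precisely the classical justification (going back to Thorin) for the quadrant domain $[0,\infty)\times[0,1]$ appearing in the statement.
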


%\begin{proof} [Proof of Theorem \ref{RiesT}]
%See %\cite{TAOBlog}.
%\end{proof}

As in \cite[Corollary 2.2]{GR10}, one has the following consequence of the Riesz-Thorin theorem. %[p.1321]

\begin{cor}\label{corRT}
If $f$ is an affine function and $\log K(p)\le f(p)$ for all $p$ in a finite set $P\in [0,\infty)\times[0,1]$ then $\log K(p)\le f(p)$ for all $p\in \mbox{hull}(P)$, where 
\begin{eqnarray*}
&&P:=\{p_1,p_2,\cdots,p_l\}\\
&&\mbox{hull}(P):=\Big\{\sum_{p_i\in P} \lambda_ip_i:\lambda_i\ge 0~\mbox{for~all}~i~\mbox{and}~\sum_{i}\lambda_i=1\Big\}
\end{eqnarray*}
\end{cor}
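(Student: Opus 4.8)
The plan is to combine the convexity of $\log K$ furnished by Theorem~\ref{RiesT} with the affinity of $f$ through a single application of the finite Jensen inequality. Fix an arbitrary point $p\in\mbox{hull}(P)$ and write it as a convex combination $p=\sum_{i}\lambda_i p_i$ with $\lambda_i\ge 0$ and $\sum_i\lambda_i=1$. Since $[0,\infty)\times[0,1]$ is itself convex, the point $p$ as well as all the $p_i$ lie in the region on which $\log K$ is convex, so there is no issue of leaving the domain where Theorem~\ref{RiesT} applies.

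First I would record the multi-point form of the convexity inequality. Theorem~\ref{RiesT} gives the two-point convexity $\log K(\lambda u+(1-\lambda)v)\le \lambda\log K(u)+(1-\lambda)\log K(v)$; iterating this over the points of $P$ (a routine induction on the number of points) yields $\log K\left(\sum_i\lambda_i p_i\right)\le\sum_i\lambda_i\log K(p_i)$. Applied to our chosen $p$ this reads $\log K(p)\le\sum_i\lambda_i\log K(p_i)$.

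Next I would feed in the hypothesis. Because $\log K(p_i)\le f(p_i)$ for each $i$ and the weights $\lambda_i$ are nonnegative, the previous bound becomes $\log K(p)\le\sum_i\lambda_i f(p_i)$. The final step uses that $f$ is affine: since $\sum_i\lambda_i=1$, an affine function commutes with the convex combination, so $\sum_i\lambda_i f(p_i)=f\left(\sum_i\lambda_i p_i\right)=f(p)$. Chaining the two estimates gives $\log K(p)\le f(p)$, and since $p\in\mbox{hull}(P)$ was arbitrary, the conclusion follows.

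There is no genuine obstacle here: the argument is the standard observation that a convex function lies below any affine function that dominates it at the vertices of a simplex, extended from the vertices to the whole hull. The only two points deserving a word are (i) that the finite Jensen inequality follows from the two-point convexity of Theorem~\ref{RiesT} by induction, and (ii) that the affinity of $f$ (rather than mere linearity or convexity), together with the normalization $\sum_i\lambda_i=1$, is exactly what is needed to convert $\sum_i\lambda_i f(p_i)$ back into $f(p)$.
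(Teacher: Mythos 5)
Your proof is correct and matches the intended argument: the paper itself gives no proof, deferring to \cite[Corollary 2.2]{GR10}, and the proof there is exactly this standard convexity argument — iterate the two-point convexity of $\log K$ from Theorem~\ref{RiesT} into a finite Jensen inequality, apply the pointwise hypothesis at the $p_i$, and use affinity of $f$ with $\sum_i\lambda_i=1$ to collapse $\sum_i\lambda_i f(p_i)$ to $f(p)$. Nothing is missing; your remarks on the two delicate points (induction for Jensen, and affinity rather than mere convexity of $f$) are precisely the content of the corollary.
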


%\begin{defn}\label{LacunarySeq}
%(See \cite{Gra14a:book}, p. 238, Definition 3.7.1) A sequence of positive integers $\{\lambda_k\}_{k=1}^\infty$ is called lacuary if there exists a constant $A>1$ such that $\lambda_{k+1}\ge A\lambda_k$ for all $k\in \mathbb{Z}^+$.
%\end{defn}

We need some facts about lacunary series (see, e.g., \cite{Zyg59:book}), %[Vol. 1], p. 202, (6-1)
which are series in which the terms that differ from zero are ``very sparse". 

\begin{defn}\label{LacunarySeris}
A lacunary trigonometric series is a series that can be written in the form
\begin{eqnarray}
\sum_{k=1}^\infty (a_k\cos n_kx+b_k\sin n_kx)=:\sum_{k=1}^\infty A_{n_k}(x) ,
\end{eqnarray}
where the sequence $(n_k:k\in\Nat)$ satisfies 
\ben
\frac{n_{k+1}}{n_k} \ge \Lambda>1  
\een
for each $k\in\Nat$.
\end{defn}

%\begin{lem}\label{LemLacunarySerisDiv}
%(See \cite{Zyg59:book}, Vol. 1, p. 205, Remarks (b)) Consider the sum 
%$$
%\sum_{k=1}^\infty (a_k^2+b_k^2)=:\sum_{k=1}^\infty \rho_k^2
%$$
%If $\sum\rho_k^2$ is infinite, then not only does the sequence of the partial sums of $\sum A_{n_k}(x)$ diverge almost everywhere, but so does every subsequence of this sequence. 
%\end{lem}

%\begin{thm}\label{EquivLpNormLac}
%(See \cite{Gra14a:book}, p. 240, Theorem 3.7.4) Let $0<\lambda_1<\lambda_2<\lambda_3<\cdots$ be a lacunary sequence with constant $A>1$. Set $\Lambda:=\{\lambda_k:k\in\mathbb{Z}^+\}$. Then for all $1<p<\infty$ there exists a constant $C_p(A)$ such that for all $f\in L^1(\mathbb{T})$ with $\hat{f}(k)=0$ when $k\in \mathbb{Z}\setminus \Lambda$, we have 
%\begin{eqnarray}\label{eq:EquivLacNorm}
%\|f\|_p\le C_p(A)\|f\|_1
%\end{eqnarray}
%Note that the converse inequality to (\ref{eq:EquivLacNorm}) is trivial. Therefore $L^p$ norms of lacunary Fourier series are all equivalent for $1\le p<\infty$.
%\end{thm}

\begin{thm}\label{Thm:DependCLT}\cite[Vol. 2, p. 264]{Zyg59:book} % (5-5) Theorem
Consider a lacunary trigonometric series 
\be\label{eq:LacunarySeries}
\sum_{k=1}^\infty(a_k\cos n_kx+b_k\sin n_kx)=\sum r_k\cos (n_kx+x_k)=:\sum A_{n_k}(x)
\ee
where $n_{k+1}/n_k\ge \Lambda>1$ and $r_k\ge 0$ for all $k$. Write
\begin{eqnarray*}
S_k(x)&:=&\sum_{j=1}^k(a_j\cos n_jx+b_j\sin n_jx)\\
A_k &:=&\left(\frac{1}{2}\sum_{j=1}^k(a_j^2+b_j^2)\right)^{1/2}
\end{eqnarray*}
Suppose that $A_k$ and $r_k$ satisfies 
\begin{eqnarray}\label{eq:ExtraLacunaryCondition}
(i)~A_k\rightarrow\infty,~~(ii)~r_k/A_k\rightarrow 0
\end{eqnarray}
Under the hypothesis (\ref{eq:ExtraLacunaryCondition}), the functions $S_k(x)/A_k$ are asymptotically Gaussian distributed on each set $E\subset(0,2\pi)$ of positive measure, that is, 
\begin{eqnarray}\label{eq:LacunaryCLT}
\frac{\lambda\left\{x\in E:~S_k/A_k\ge y\right\}}{\lambda(E)}\rightarrow \frac{1}{2\pi} \int_y^{\infty}e^{-x^2/2}dx
\end{eqnarray}
where $\lambda$ is the Lebesgue measure on $[0,2\pi]$.
\end{thm}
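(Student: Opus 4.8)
The plan is to prove convergence in distribution by showing that the characteristic functions of the normalized partial sums, computed against normalized Lebesgue measure on $E$, converge to the Gaussian one; by L\'evy's continuity theorem this is equivalent to \eqref{eq:LacunaryCLT}, and the independence of the limit from the shape of $E$ will fall out of the argument. Writing $\theta_j=n_jx+x_j$ and $c_j=c_j(k,t)=tr_j/A_k$, I would aim to prove
\[
\phi_k^E(t):=\frac{1}{\lambda(E)}\int_E\exp\!\Big(\frac{it\,S_k(x)}{A_k}\Big)\,dx=\frac{1}{\lambda(E)}\int_E\prod_{j=1}^k e^{ic_j\cos\theta_j}\,dx\longrightarrow e^{-t^2/2}
\]
for every fixed $t\in\RL$. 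The central device is the Jacobi--Anger expansion $e^{ic\cos\theta}=\sum_{n\in\IN}i^nJ_n(c)e^{in\theta}$, which turns the integrand into a multiple Fourier series whose frequencies are the integer combinations $\sum_{j=1}^k m_jn_j$.

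Multiplying out, $\phi_k^E(t)$ becomes a sum over frequency vectors $(m_1,\dots,m_k)$ of the coefficient $\big(\prod_j i^{m_j}J_{m_j}(c_j)\big)$ times $\frac{1}{\lambda(E)}\int_E e^{i(\sum_j m_jn_j)x}\,dx=\frac{1}{\lambda(E)}\widehat{\mathbf{1}_E}(-\sum_j m_jn_j)$. Here the lacunarity hypothesis $n_{j+1}/n_j\ge\Lambda>1$ enters decisively: it forces the resultant frequency $\sum_j m_jn_j$ to vanish essentially only when every $m_j=0$, so that the only surviving ``diagonal'' contribution is the constant term $\prod_{j=1}^k J_0(c_j)$. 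Every remaining term carries a nonzero frequency $\nu=\sum_j m_jn_j$, whose coefficient $\widehat{\mathbf{1}_E}(-\nu)$ tends to $0$ as $|\nu|\to\infty$ by the Riemann--Lebesgue lemma; this is precisely the mechanism that makes the limit depend on $E$ only through $\lambda(E)$ and not through its shape.

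It then remains to evaluate the main term. Using $\log J_0(c)=-c^2/4+O(c^4)$ together with $\sum_{j=1}^k c_j^2=t^2\sum_{j=1}^k r_j^2/A_k^2=2t^2$ (by the definition $A_k^2=\tfrac12\sum_j r_j^2$), one gets $\sum_j\log J_0(c_j)\to-t^2/2$, hence $\prod_{j=1}^k J_0(c_j)\to e^{-t^2/2}$, provided the error $\sum_jO(c_j^4)=O\big(\max_j c_j^2\cdot\sum_j c_j^2\big)$ vanishes. This is exactly where hypotheses (i) $A_k\to\infty$ and (ii) $r_k/A_k\to0$ act as the Lindeberg--Feller conditions: since $r_j/A_k\le r_j/A_j$ for $k\ge j$, they jointly force $\max_{j\le k}c_j\to0$ while keeping $\sum_j c_j^2=2t^2$ fixed, so no single frequency dominates and the Bessel-tail corrections are negligible.

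The main obstacle is the uniform control of the oscillatory terms as $k\to\infty$. Riemann--Lebesgue annihilates each nonzero-frequency coefficient individually, but their number grows with $k$ and a naive triangle-inequality bound fails: already the first-order terms $\sum_j J_1(c_j)\,\widehat{\mathbf{1}_E}(\pm n_j)$ need not be absolutely small, because $\sum_j J_1(c_j)$ may diverge (e.g.\ when all $r_j$ are equal). Controlling these requires genuinely exploiting the sparsity of $\{n_j\}$ and the cancellation encoded in the multiple Fourier series rather than bare absolute values, and this is the technical heart of Zygmund's argument. A further subtlety is that for $\Lambda$ close to $1$ the implication $\sum_j m_jn_j=0\Rightarrow m_j\equiv0$ can fail through ``accidental'' resonances, which must be counted and shown to contribute negligibly --- either by first treating $\Lambda\ge3$, where the representation is unique, and then bootstrapping, or by bounding the number of resonant frequency vectors directly via lacunarity. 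The same pairing-versus-resonance combinatorics is what one must master if one instead runs the method of moments (Fr\'echet--Shohat), matching $\frac{1}{\lambda(E)}\int_E(S_k/A_k)^m\,dx$ to $(m-1)!!$ for even $m$ and to $0$ for odd $m$.
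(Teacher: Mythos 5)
The paper itself offers no proof of this statement: it is quoted directly from Zygmund (Vol.~2, p.~264) --- this is the Salem--Zygmund central limit theorem --- so your attempt can only be judged against the classical argument. Your skeleton is the right one and is close in spirit to that argument: convergence of the characteristic functions $\phi_k^E(t)$ taken against normalized Lebesgue measure on $E$, a ``constant term'' producing $e^{-t^2/2}$, and Riemann--Lebesgue to erase the dependence on the shape of $E$. Your accounting of the hypotheses is also correct: (i) and (ii) together force $\max_{j\le k}c_j\to 0$ while $\sum_{j\le k}c_j^2=2t^2$ stays fixed, which is exactly what makes $\prod_{j\le k} J_0(c_j)\to e^{-t^2/2}$.

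The gap is that the step you yourself call ``the technical heart'' --- uniform control of the nonconstant terms as $k\to\infty$ --- is never carried out, and it is the whole difficulty; what you have is a plan, not a proof. Moreover, the device you chose makes that step harder than it needs to be. With the full Jacobi--Anger expansion each factor contributes \emph{all} integer multiples of $n_j$, so resonances $\sum_j m_jn_j=0$ with $m\neq 0$ occur for \emph{every} $\Lambda$, not just for $\Lambda$ near $1$: for $n_j=3^j$ one already has $3n_1-n_2=0$. Hence the constant term of your expansion is not $\prod_jJ_0(c_j)$ alone but that product plus an infinite sum of resonant contributions, which would have to be tamed via the decay $J_m(c)=O\big((c/2)^{|m|}/|m|!\big)$ together with a count of resonant vectors --- an analysis you only gesture at. This is precisely why the classical proof does \emph{not} expand $e^{ic_j\cos\theta_j}$ fully: one writes $e^{iu}=(1+iu)\,e^{-u^2/2+O(|u|^3)}$, so the oscillation is carried by the Riesz-type product $\prod_j\big(1+ic_j\cos\theta_j\big)$, whose expansion contains only frequencies $\sum_j\epsilon_jn_j$ with $\epsilon_j\in\{-1,0,1\}$; lacunarity (after grouping the $n_j$ into blocks when $\Lambda$ is close to $1$) guarantees these are nonzero and tend to infinity, so $\int_E\prod_j(1+ic_j\cos\theta_j)\,dx\to\lambda(E)$, while the Gaussian factor emerges from $\exp\big(-\tfrac12\sum_jc_j^2\cos^2\theta_j\big)=\exp\big(-\tfrac14\sum_jc_j^2\big)\exp\big(-\tfrac14\sum_jc_j^2\cos2\theta_j\big)$, the last exponent being negligible in measure again by lacunarity and an $L^2$ bound. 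To turn your sketch into a proof you should either switch to this truncated product, or actually supply the Bessel-tail and resonance estimates; as written, neither your first-order difficulty (which, incidentally, can be handled by splitting the sum at a fixed index and using Bessel's inequality for the orthogonal system $\{e^{in_jx}\}$) nor the higher-order combinatorics is resolved.
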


Finally we need the Lyapunov Central Limit Theorem.

\begin{thm}\label{LyapCLT}
Suppose $\{Y_1, Y_2, \cdots\}$ is a sequence of independent random variables, each with finite expected value $\mu_k$ and variance $\sigma_k^2$. Define
$$
s_n^2:=\sum_{k=1}^n\sigma_k^2
$$
If for some $\delta>0$, the Lyapunov condition
\begin{eqnarray}\label{LyapCdt}
\lim_{n\rightarrow\infty}\frac{1}{s_n^{2+\delta}}\sum_{k=1}^n\mathbb{E}(|Y_k-\mu_k|^{2+\delta})=0
\end{eqnarray}
is satisfied, then as $n\rightarrow\infty$,
\begin{eqnarray}\label{LyapConcl}
\frac{1}{s_n}\sum_{k=1}^n(Y_k-\mu_k)\xrightarrow{d}\mathcal{N}(0,1)
\end{eqnarray}
where $\mathcal{N}(0,1)$ is the standard normal distribution.
\end{thm}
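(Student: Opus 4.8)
The plan is to prove convergence in distribution through characteristic functions, invoking L\'evy's continuity theorem: it suffices to show that the characteristic function of $W_n := s_n^{-1}\sum_{k=1}^n(Y_k-\mu_k)$ converges pointwise to $e^{-t^2/2}$, the characteristic function of $\mathcal{N}(0,1)$, since the latter is continuous at the origin. Writing $Z_{n,k} := (Y_k-\mu_k)/s_n$, the $Z_{n,k}$ form a triangular array of row-wise independent, centered random variables with $\sum_{k=1}^n \mathbb{E}[Z_{n,k}^2] = 1$ and $W_n = \sum_{k=1}^n Z_{n,k}$. By independence the characteristic function factors as $\mathbb{E}[e^{itW_n}] = \prod_{k=1}^n \phi_{n,k}(t)$, where $\phi_{n,k}(t) := \mathbb{E}[e^{itZ_{n,k}}]$.

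First I would record the single analytic input, the elementary Taylor-remainder bound
\[
\Big| e^{ix} - 1 - ix + \tfrac{x^2}{2}\Big| \le \min\{|x|^3,\,|x|^2\} \le |x|^{2+\delta},
\]
valid for every $\delta\in(0,1]$ (the last step follows by treating $|x|\le 1$ and $|x|>1$ separately). Applying this with $x = tZ_{n,k}$, taking expectations, and using $\mathbb{E}[Z_{n,k}]=0$, gives
\[
\Big| \phi_{n,k}(t) - \big(1 - \tfrac{t^2}{2}\mathbb{E}[Z_{n,k}^2]\big)\Big| \le |t|^{2+\delta}\,\mathbb{E}\big[|Z_{n,k}|^{2+\delta}\big].
\]
Summing over $k$ and substituting $\mathbb{E}[|Z_{n,k}|^{2+\delta}] = s_n^{-(2+\delta)}\,\mathbb{E}[|Y_k-\mu_k|^{2+\delta}]$, the total error is at most $|t|^{2+\delta}$ times precisely the quantity in the Lyapunov condition (\ref{LyapCdt}), and hence tends to $0$ as $n\to\infty$.

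Next I would pass from the product $\prod_k \phi_{n,k}(t)$ to the sum $\sum_k(\phi_{n,k}(t)-1)$. The point is that the Lyapunov hypothesis forces uniform asymptotic negligibility: splitting the second moment at a threshold $\epsilon>0$ yields $\mathbb{E}[Z_{n,k}^2] \le \epsilon^2 + \epsilon^{-\delta}\,\mathbb{E}[|Z_{n,k}|^{2+\delta}]$, so $\max_k \mathbb{E}[Z_{n,k}^2]\to 0$ and therefore $\max_k|\phi_{n,k}(t)-1|\to 0$, while $\sum_k|\phi_{n,k}(t)-1|$ stays bounded. With the factors uniformly close to $1$ and their deviations summable, the standard estimate $\big|\prod_k z_k - \exp\!\big(\sum_k(z_k-1)\big)\big|\to 0$ applies. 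Combining with the previous step, $\sum_k(\phi_{n,k}(t)-1) = -\tfrac{t^2}{2}\sum_k\mathbb{E}[Z_{n,k}^2] + o(1) = -\tfrac{t^2}{2}+o(1)$, whence $\mathbb{E}[e^{itW_n}]\to e^{-t^2/2}$ and the continuity theorem delivers (\ref{LyapConcl}).

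The main obstacle is the bookkeeping in the third step: one must confirm that the individual factors $\phi_{n,k}(t)$ are genuinely negligible, so that the product is faithfully captured by the exponential of the summed deviations, rather than merely that the deviations sum to a finite quantity. This is exactly the place where the Lyapunov condition, as opposed to mere finiteness of the variances, is indispensable, since it supplies the uniform control $\max_k \mathbb{E}[Z_{n,k}^2]\to 0$ that a second-moment bound alone cannot guarantee.
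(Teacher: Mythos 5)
This theorem is stated in Section~\ref{sec:prelim} of the paper as a classical preliminary and is \emph{not} proved there (the preliminaries explicitly omit proofs of textbook facts; the result is only invoked later, in the proof of Proposition~\ref{PropR3'XDis}, Case~3). So there is no in-paper argument to compare against, and your proposal must stand on its own. Your route --- L\'evy continuity, the triangular array $Z_{n,k}=(Y_k-\mu_k)/s_n$, the second-order Taylor estimate, uniform asymptotic negligibility extracted from the Lyapunov sum, and the comparison of $\prod_k\phi_{n,k}(t)$ with $\exp\bigl(\sum_k(\phi_{n,k}(t)-1)\bigr)$ --- is the standard characteristic-function proof, and the individual steps are sound: the split $\mathbb{E}[Z_{n,k}^2]\le\epsilon^2+\epsilon^{-\delta}\,\mathbb{E}[|Z_{n,k}|^{2+\delta}]$ does force $\max_k\mathbb{E}[Z_{n,k}^2]\to 0$; consequently $\max_k|\phi_{n,k}(t)-1|\to 0$ while $\sum_k|\phi_{n,k}(t)-1|\le t^2/2$, which is exactly what the product-versus-exponential lemma requires; and the summed Taylor errors are controlled by $|t|^{2+\delta}$ times the Lyapunov quantity.

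There is one genuine, though easily repaired, gap: your chain
\[
\Bigl|e^{ix}-1-ix+\tfrac{x^2}{2}\Bigr|\le\min\{|x|^3,|x|^2\}\le|x|^{2+\delta}
\]
holds, as you note, only for $\delta\in(0,1]$; for $\delta>1$ and $|x|<1$ the last inequality reverses. Since the theorem assumes the Lyapunov condition for \emph{some} $\delta>0$, possibly larger than $1$, your argument as written does not cover the full hypothesis. The fix is one line, using the very same threshold trick you already employ: for $0<\delta'<\delta$ and any $\epsilon>0$,
\[
\mathbb{E}\bigl[|Z_{n,k}|^{2+\delta'}\bigr]\le\epsilon^{\delta'}\,\mathbb{E}\bigl[Z_{n,k}^2\bigr]+\epsilon^{\delta'-\delta}\,\mathbb{E}\bigl[|Z_{n,k}|^{2+\delta}\bigr],
\]
so summing over $k$ and using $\sum_k\mathbb{E}[Z_{n,k}^2]=1$ gives $\limsup_n\sum_k\mathbb{E}[|Z_{n,k}|^{2+\delta'}]\le\epsilon^{\delta'}$ for every $\epsilon>0$; hence the Lyapunov condition for exponent $\delta$ implies it for every smaller exponent, in particular for $\delta'=1$. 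Prepending ``without loss of generality $\delta\le 1$'' with this justification makes your proof complete for all $\delta>0$.
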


\section{The compact case}
\label{sec:compact}

From now on, we always assume that our LCA groups are non-finite. 
From now on, we define the group operator as ``$+$" or ``$\cdot$" for when the group is compact or discrete respectively.

\begin{thm}\label{ThmXCom}
If $X$ is a compact non-finite LCA group, we consider three regions as in Figure. 1:
\begin{eqnarray}
&&R_1:=\Big\{\Big(\frac{1}{p},\frac{1}{q}\Big)\in [0,\infty)^2:\frac{1}{p}+\frac{1}{q}\le 1, \frac{1}{q}\le \frac{1}{2}\Big\}\\
&&R_2:=\Big\{\Big(\frac{1}{p},\frac{1}{q}\Big)\in [0,\infty)^2:\frac{1}{p}+\frac{1}{q}> 1, \frac{1}{p}> \frac{1}{2}\Big\}\\
&&R_3:=\Big\{\Big(\frac{1}{p},\frac{1}{q}\Big)\in [0,\infty)^2:\frac{1}{p}\le \frac{1}{2}, \frac{1}{q}>\frac{1}{2}\Big\}
\end{eqnarray}
Then the norm of the Fourier operator from $L^p(X)$ to $L^q(\hat{X})$ satisfies 
\begin{eqnarray*}
C_{p,q}=\begin{cases}
\alpha(X)^{1-1/p-1/q} & \mbox{if}~\Big(\frac{1}{p},\frac{1}{q}\Big)\in R_1\\
\infty & \mbox{if}~\Big(\frac{1}{p},\frac{1}{q}\Big)\in R_2\cup R_3
\end{cases}
\end{eqnarray*}
\end{thm}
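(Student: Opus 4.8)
The plan is to split the $(1/p,1/q)$-plane into the finite region $R_1$ and the infinite region $R_2\cup R_3$ and treat them by completely different mechanisms: interpolation plus a single extremiser for $R_1$, and divergent families of test functions for $R_2\cup R_3$. For $R_1$ I would first obtain the upper bound $C_{p,q}\le \alpha(X)^{1-1/p-1/q}$ by interpolating from the four corners $(1/p,1/q)\in\{(0,0),(1,0),(1/2,1/2),(0,1/2)\}$ of the quadrilateral $R_1$. At each corner the bound is elementary: $\|\hat f\|_\infty\le\|f\|_1\le\alpha(X)\|f\|_\infty$ gives $C_{\infty,\infty}\le\alpha(X)$ and $C_{1,\infty}\le 1$; Parseval's identity (Proposition~\ref{Unitary}) gives $C_{2,2}=1$; and $\|\hat f\|_2=\|f\|_2\le\alpha(X)^{1/2}\|f\|_\infty$ gives $C_{\infty,2}\le\alpha(X)^{1/2}$. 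Since $(1-x-y)\log\alpha(X)$ is affine and dominates $\log K$ at these four points, Corollary~\ref{corRT} (the convexity consequence of Riesz--Thorin, Theorem~\ref{RiesT}) propagates the bound to the whole convex hull, which is exactly $R_1$. The matching lower bound is global and comes from the constant function $f\equiv 1$: by character orthogonality (Proposition~\ref{Ortho}) one has $\hat f=\alpha(X)\,\mathbb{I}_{\gamma_0}$, so using the normalisation $\hat\alpha(\gamma_0)=\alpha(X)^{-1}$ of Proposition~\ref{HarMeas} one computes $\|\hat f\|_q/\|f\|_p=\alpha(X)^{1-1/p-1/q}$ for every $p,q$, pinning down the value on $R_1$.

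For the divergence on $R_2=\{1/p+1/q>1,\ 1/p>1/2\}$ I would use a Dirichlet-kernel family. When $\hat X$ contains an element $\gamma$ of infinite order I set $D_N=\sum_{|k|\le N}\gamma^{\,k}$; pushing Haar measure forward under $\gamma$ (whose image is dense in $\mathbb{T}$) identifies $\|D_N\|_p$ with the classical Dirichlet-kernel norm, so $\|D_N\|_p\asymp N^{1-1/p}$ for $p>1$ (and $\asymp\log N$ for $p=1$, bounded for $p<1$). When instead $\hat X$ is torsion it has finite subgroups $H_N$ of unbounded order, and $D_N=\sum_{\gamma\in H_N}\gamma=|H_N|\,\mathbb{I}_{H_N^\perp}$ gives the same scaling $\|D_N\|_p\asymp|H_N|^{1-1/p}$ via $\alpha(H_N^\perp)=\alpha(X)/|H_N|$. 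In either case $\hat D_N$ equals $\alpha(X)$ on a set of size $N$ (resp.\ $|H_N|$), so $\|\hat D_N\|_q\asymp N^{1/q}$ and the ratio scales like $N^{1/p+1/q-1}\to\infty$ precisely because $1/p+1/q>1$; this covers $R_2\cap\{q<\infty\}$. The remaining sliver of $R_2$ with $q=\infty$ forces $p<1$, and there the concentration family $f=\mathbb{I}_E/\alpha(E)^{1/p}$ over shrinking sets $E$ already makes $|\hat f(\gamma_0)|=\alpha(E)^{1-1/p}\to\infty$, so $C_{p,\infty}=\infty$.

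For $R_3=\{1/p\le1/2,\ 1/q>1/2\}$ I would instead use a lacunary (dissociate) family $f=\sum_{k=1}^n\gamma_k$, choosing the $\gamma_k$ independent so that under Haar measure the $\gamma_k(\cdot)$ are independent unimodular mean-zero variables; the Lyapunov condition then holds trivially, so by Theorem~\ref{LyapCLT} (with the distributional refinement of Theorem~\ref{Thm:DependCLT}) $f/\sqrt n$ is asymptotically Gaussian and $\|f\|_p\asymp\sqrt n$ for each fixed $p<\infty$. Since $\|\hat f\|_q\asymp n^{1/q}$, the ratio scales like $n^{1/q-1/2}\to\infty$ exactly when $q<2$, giving $C_{p,q}=\infty$ throughout $R_3\cap\{p<\infty\}$. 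The boundary ray $p=\infty$ (where the equal-coefficient sum has $\|f\|_\infty$ too large to use directly) I would handle by contradiction: if $C_{\infty,q}$ were finite for some $q<2$, interpolating it against the Plancherel point $C_{2,2}=1$ via Theorem~\ref{RiesT} would force finiteness at interior points with $p<\infty$ and $q<2$, contradicting the lacunary bound; hence $C_{\infty,q}=\infty$ as well.

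The main obstacle is the passage from $\mathbb{R}^d$ or the circle to an arbitrary infinite compact abelian group: both divergent families rely on structural input about $\hat X$, namely the dichotomy between an infinite-order character (yielding a Dirichlet kernel pulled back from $\mathbb{T}$ and an honest lacunary sequence) and a torsion dual (yielding kernels from large finite subgroups and independent sets from an infinite direct sum of cyclic groups). Carrying out the $L^p$-norm asymptotics of these kernels and polynomials uniformly across the two cases, and verifying the CLT hypotheses so that $\|f\|_p\asymp\sqrt n$ with the correct power of $n$, is the technical heart; by contrast the interpolation arguments that fill in the boundary rays $p=\infty$ and $q=\infty$ are comparatively routine.
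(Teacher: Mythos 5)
Your treatment of $R_1$ coincides with the paper's proof of Proposition~\ref{PropR1XCom}: the same four corner estimates fed into Corollary~\ref{corRT}, with the trivial character (your $f\equiv 1$) as the extremiser. Your treatment of $R_2$ is correct but genuinely different from the paper's proof of Proposition~\ref{PropR2XCom}: where the paper splits the torsion case into ``unbounded orders'' and ``uniformly bounded orders'' and, in the latter case, builds an independent family of prime-order characters (Lemmas~\ref{Lemminprim} through \ref{H_n^perp}), you take any increasing chain of finite subgroups $H_N\le\hat{X}$ with $|H_N|\to\infty$ (which exists in every infinite torsion abelian group) and use the annihilator identity $[X:H_N^\perp]=|H_N|$. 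Since $|H_N|\,\mathbb{I}_{H_N^\perp}$ is the same kind of test function as the paper's $\mathbb{I}_{H_n}/\mathbb{P}(H_n)$, this buys a substantially shorter argument, at the cost of invoking duality for the index computation; in fact the easy inequality $[X:H_N^\perp]\le|H_N|$, coming from the injection $X/H_N^\perp\hookrightarrow\widehat{H_N}$, already suffices for the blow-up, so this route is sound. The pulled-back Dirichlet kernels in the infinite-order case (justified by Lemma~\ref{Distribofg}) are likewise a legitimate substitute for the paper's normalized indicators $\mathbb{I}_{U_k}/\mathbb{P}(U_k)$.

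The genuine gap is in $R_3$. Your construction assumes that a torsion dual group yields ``independent sets from an infinite direct sum of cyclic groups,'' but an infinite torsion abelian group need not contain such a direct sum, nor even two independent elements: the Pr\"ufer group $\mathbb{Z}(p^\infty)=\mathbb{Z}[1/p]/\mathbb{Z}$, the dual of the compact group $X=\mathbb{Z}_p$ of $p$-adic integers, has its subgroups totally ordered by inclusion, so for any two nontrivial elements $a,b$ one is a power of the other, say $a=b^u$; then $a^{-1}b^u=1$ while $a^{-1}\neq 1$, which is exactly the failure of independence of the random variables $a,b$ on $X$. This group also has unbounded element orders and no infinite-order element, so it escapes both halves of your dichotomy; it is precisely Case 2 of the paper's Proposition~\ref{PropR3XCom}, handled there by a different device (truncated Zygmund series $\sum_k c_k g_n^k$ in powers of a single character $g_n$ of large order, uniformly bounded but with Fourier coefficients escaping $\ell^q$ for $q<2$). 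Separately, even where independence or lacunarity is available, the CLT (Theorem~\ref{LyapCLT} or Theorem~\ref{Thm:DependCLT}) is the wrong tool for the step you hang on it: convergence in distribution gives lower bounds on the measure of sets where $|f_n|$ is large, but not the moment upper bound $\|f_n\|_p\le C_p\sqrt{n}$ that your ratio argument requires; that bound needs a Khintchine--Rosenthal inequality in the independent case, or the lacunary $L^p$-equivalence theorem (Grafakos, Theorem 3.7.4, which the paper cites only in a remark on the discrete case). Note that the paper's compact-case proof avoids moment estimates altogether by producing uniformly bounded test functions, and deploys the CLT only in the discrete case, where distributional lower bounds on $|\hat{f}_n|$ are indeed the direction needed.
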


%\subsection{Proof of Theorem \ref{ThmXCom}}

\begin{prop}\label{PropR1XCom}
If $(1/p,1/q)\in R_1$, then $C_{p,q}= \alpha(X)^{1-1/p-1/q}$.
\end{prop}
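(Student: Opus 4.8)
The plan is to establish the two matching inequalities $C_{p,q}\ge \alpha(X)^{1-1/p-1/q}$ and $C_{p,q}\le \alpha(X)^{1-1/p-1/q}$ on all of $R_1$, using a single explicit test function for the lower bound and Riesz--Thorin interpolation from the four vertices of $R_1$ for the upper bound. The key structural observation is that the claimed constant is \emph{log-affine} in the coordinates $(1/p,1/q)$, namely $\log\big(\alpha(X)^{1-1/p-1/q}\big)=(1-\tfrac1p-\tfrac1q)\log\alpha(X)$, so convexity of $\log K$ should force the interpolated upper bound to coincide exactly with the lower bound.

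For the lower bound I would take the constant function $f\equiv 1$ on $X$, which lies in every $L^p(X)$ since $X$ is compact. By orthogonality of the characters (Proposition \ref{Ortho}), its Fourier transform is supported at the trivial character $\gamma_0$, with $\hat f=\alpha(X)\,\mathbb{I}_{\gamma_0}$. Then $\|f\|_p=\alpha(X)^{1/p}$, and using the normalization $\hat\alpha(\{\gamma\})=\alpha(X)^{-1}$ from Proposition \ref{HarMeas}, one computes $\|\hat f\|_q=\big(\alpha(X)^{q}\alpha(X)^{-1}\big)^{1/q}=\alpha(X)^{1-1/q}$ (reading $\|\hat f\|_\infty=\alpha(X)$ when $q=\infty$). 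Hence the ratio $\|\hat f\|_q/\|f\|_p$ equals $\alpha(X)^{1-1/p-1/q}$ for every $(p,q)$, giving $C_{p,q}\ge \alpha(X)^{1-1/p-1/q}$ throughout $R_1$.

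For the upper bound, I would note that $R_1$ is the convex hull of the four vertices $(1/p,1/q)\in\{(0,0),(1,0),(\tfrac12,\tfrac12),(0,\tfrac12)\}$, and verify the bound at each by elementary estimates: at $(0,0)$, $|\hat f(\gamma)|\le\|f\|_1\le\alpha(X)\|f\|_\infty$ yields $C_{\infty,\infty}\le\alpha(X)$; at $(1,0)$, $|\hat f(\gamma)|\le\|f\|_1$ yields $C_{1,\infty}\le 1$; at $(\tfrac12,\tfrac12)$, Parseval's identity (Proposition \ref{Unitary}) yields $C_{2,2}\le 1$; and at $(0,\tfrac12)$, Parseval together with $\|f\|_2\le\alpha(X)^{1/2}\|f\|_\infty$ yields $C_{\infty,2}\le\alpha(X)^{1/2}$. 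Each of these values matches the affine function $(1-\tfrac1p-\tfrac1q)\log\alpha(X)$ at the corresponding vertex. Since $\log K$ is convex on $[0,\infty)\times[0,1]$ by the Riesz--Thorin theorem (Theorem \ref{RiesT}), Corollary \ref{corRT}, applied with this finite vertex set and this affine majorant, propagates the bound to all of $\mathrm{hull}(P)=R_1$, giving $C_{p,q}\le\alpha(X)^{1-1/p-1/q}$.

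Combining the two bounds proves the proposition. All steps are routine once the framework of Sections \ref{sec:prelim} is invoked; the only point requiring genuine care is the bookkeeping of the two Haar-measure normalizations from Proposition \ref{HarMeas} when computing $\|\hat f\|_q$ and the corner norms, since any mismatch there would distort the exponent of $\alpha(X)$ and break the exact agreement between the lower and upper bounds. I do not expect a serious obstacle beyond this bookkeeping, precisely because the extremal behavior is captured by the single function $f\equiv 1$ and the target constant is log-affine.
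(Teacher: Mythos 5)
Your proposal is correct and follows essentially the same route as the paper: the lower bound via a character (your constant function $f\equiv 1$ is exactly the trivial character, a special case of the paper's ``any frequency basis $\gamma\in\hat X$''), and the upper bound via Riesz--Thorin interpolation from the same four vertices $(0,0),(1,0),(\tfrac12,\tfrac12),(0,\tfrac12)$ with the same elementary estimates. No gaps; the normalization bookkeeping you flag is handled exactly as in Proposition \ref{HarMeas}, and your computation $\|\hat f\|_q=\alpha(X)^{1-1/q}$, $\|f\|_p=\alpha(X)^{1/p}$ matches the paper's.
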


\begin{proof} [Proof of Proposition \ref{PropR1XCom}]
We firstly prove that $C_{p,q}\le  \alpha(X)^{1-1/p-1/q}$. Since the region $R_1$ is convex and 
$$
R_1=\mbox{hull}\Big((0,0),(0,1),(1/2,1/2),(0,1/2)\Big)
$$
and $\log \alpha(X)^{1-1/p-1/q}$ is affine for $(1/p,1/q)$, it suffices to check that $C_{p,q}\le \alpha(X)^{1-1/p-1/q}$ holds at $(0,0),(1,0),(1/2,1/2),(0,1/2)$ by Corollary \ref{corRT}. \\

At the point $(0,0)$, we have 
\begin{eqnarray}
\|\hat{f}\|_\infty\le \|f\|_1\le \alpha(X)\|f\|_\infty
\end{eqnarray}
so $C_{\infty,\infty}\le \alpha(X)$. \\

At the point $(1,0)$, we have 
\begin{eqnarray}
\|\hat{f}\|_\infty\le \|f\|_1
\end{eqnarray}
so $C_{1,\infty}\le 1$.\\

At the point $(1/2,1/2)$, we have that $C_{2,2}=1$ by the fact that Fourier transform is unitary. \\

At the point $(0,1/2)$, we have 
\begin{eqnarray}
\|\hat{f}\|_2=\|f\|_2=\Big(\int_X|f|^2\alpha(dx)\Big)^{1/2}\le \alpha(X)^{1/2}\|f\|_\infty
\end{eqnarray}
so $C_{\infty,2}\le \alpha(X)^{1/2}$, which provides $C_{p,q}\le  \alpha(X)^{1-1/p-1/q}$. \\

Further, the upper bound $\alpha(X)^{1-1/p-1/q}$ can be attained by any frequency basis $\gamma\in \hat{X}$. In fact, $\|\gamma(x)\|_p=\alpha(X)^{1/p}$. By (\ref{Dta}) and Proposition \ref{HarMeas}, $\|\hat{\gamma}\|_q=\alpha(X)^{1-1/q}$, this yields the fact $C_{p,q}\ge \alpha(X)^{1-1/p-1/q}$, which ends the proof. 
\end{proof}

\begin{prop}\label{PropR2XCom}
If $(1/p,1/q)\in R_2$, then $C_{p,q}= \infty$.
\end{prop}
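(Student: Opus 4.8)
The plan is to prove the more convenient claim that $C_{p,q}=\infty$ whenever $1/p+1/q>1$; since every point of $R_2$ satisfies this single inequality, the proposition follows. Because $C_{p,q}=\sup_{\|f\|_p=1}\|\hat f\|_q$, it suffices to produce a sequence of admissible functions $f_n\in L^1(X)\cap L^p(X)$ with $\|\hat f_n\|_q/\|f_n\|_p\to\infty$. The functions I would use are indicators of small \emph{structured} sets, chosen so that $\hat f_n$ is as peaked as the group allows; the threshold $1/p+1/q=1$ will appear exactly as the exponent at which the resulting ratio stops diverging. The construction must bifurcate according to the structure of the infinite discrete dual $\hat X$: either $\hat X$ contains an element of infinite order, or $\hat X$ is an infinite torsion group.

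\emph{Case A.} Suppose some $\gamma_0\in\hat X$ has infinite order. Since $X$ is compact, $\gamma_0\colon X\to\mathbb T$ is continuous with closed, hence (being infinite) full, image, so $\gamma_0$ is a surjection $X\twoheadrightarrow\mathbb T$ pushing $\alpha$ forward to $\alpha(X)$ times normalized Haar measure on $\mathbb T$. For small $\delta>0$ set $f_\delta:=\mathbb I_{E_\delta}$ with $E_\delta:=\gamma_0^{-1}([0,\delta])$, so $\alpha(E_\delta)=\delta\,\alpha(X)$ and $\|f_\delta\|_p=(\delta\,\alpha(X))^{1/p}$. As $f_\delta$ factors through $\gamma_0$, its transform is supported on $\{\gamma_0^n:n\in\mathbb Z\}$ and equals $\alpha(X)$ times the $\mathbb T$-transform of the arc indicator; the classical estimate $|\widehat{\mathbb I_{[0,\delta]}}(n)|\asymp\min\{\delta,|n|^{-1}\}$ gives $\|\hat f_\delta\|_q\asymp\alpha(X)^{1-1/q}\,\delta^{\,1-1/q}$ for $q>1$ (and $\|\hat f_\delta\|_q=\infty$ outright for $q\le1$). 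Hence
\[
\frac{\|\hat f_\delta\|_q}{\|f_\delta\|_p}\asymp \alpha(X)^{1-1/p-1/q}\,\delta^{\,1-1/p-1/q}\xrightarrow[\delta\downarrow0]{}\infty
\]
precisely when $1/p+1/q>1$.

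\emph{Case B.} Suppose instead $\hat X$ is torsion. An infinite torsion abelian group is locally finite and so contains finite subgroups $K_m\le\hat X$ with $|K_m|=:m\to\infty$. Each annihilator $H_m:=\{x:\gamma(x)=1\ \forall\gamma\in K_m\}$ is an open subgroup of $X$ of index $m$, so $\alpha(H_m)=\alpha(X)/m$, and the transform of $f_m:=\mathbb I_{H_m}$ equals $\alpha(H_m)$ on the finite set $K_m=H_m^\perp$ and vanishes elsewhere. Using $\hat\alpha(\gamma)=\alpha(X)^{-1}$ (Proposition~\ref{HarMeas}) one computes $\|f_m\|_p=(\alpha(X)/m)^{1/p}$ and $\|\hat f_m\|_q=\alpha(X)^{1-1/q}m^{1/q-1}$, whence
\[
\frac{\|\hat f_m\|_q}{\|f_m\|_p}=\alpha(X)^{1-1/p-1/q}\,m^{\,1/p+1/q-1}\xrightarrow[m\to\infty]{}\infty,
\]
again exactly when $1/p+1/q>1$. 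In both cases $C_{p,q}=\infty$ throughout $R_2$.

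The substantive point — and the step I expect to require the most care — is the structural dichotomy together with the passage to a $\mathbb T$-quotient in Case A, since on an abstract compact group one cannot work as concretely as on $\mathbb R^d$ or finite groups: one must check that $\gamma_0$ is genuinely onto $\mathbb T$ and that the pullback preserves the relevant norms up to the fixed powers of $\alpha(X)$. It is worth noting why the lacunary/CLT machinery (Theorems~\ref{Thm:DependCLT} and~\ref{LyapCLT}) does \emph{not} suffice here: taking instead a spread-out sum $f=\sum_{k=1}^N\gamma_0^{\,n_k}$ along a lacunary frequency sequence, those theorems give $\|f\|_p\asymp\sqrt N\,\alpha(X)^{1/p}$, so that $\|\hat f\|_q/\|f\|_p\asymp N^{1/q-1/2}$ diverges only when $1/q>1/2$. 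This reaches the subregion $q<2$ of $R_2$ but misses the part with $q\ge2$, which is exactly why the \emph{concentrated} indicator construction above, rather than a sum of characters, is essential for the full proposition.
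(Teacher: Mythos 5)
Your proof is correct, and it follows the paper's overarching strategy---prove the stronger claim that $C_{p,q}=\infty$ whenever $1/p+1/q>1$, using indicator test functions adapted to a case analysis on the torsion structure of the discrete dual $\hat{X}$---but it organizes the cases differently and leans on different tools. The paper splits into \emph{three} cases (an element of infinite order; unbounded finite orders; uniformly bounded orders) and, in the bounded-order case, constructs by hand a family of independent characters of a fixed prime order (its Lemmas \ref{Lemminprim}--\ref{H_n^perp}) in order to produce subgroups $H_n$ with $\mathbb{P}(H_n)=r^{-n}$ and $|H_n^\perp|=r^n$. You collapse the two torsion cases into one by noting that an infinite torsion abelian group is locally finite, hence contains finite subgroups $K_m$ of unbounded order, and then invoking annihilator duality to conclude $\alpha(K_m^\perp)=\alpha(X)/|K_m|$ and $(K_m^\perp)^\perp=K_m$; this genuinely shortens the argument, replacing the paper's independence lemmas by general structure theory. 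The price is that those duality facts need either a citation (e.g.\ Rudin) or the short argument that $X/K_m^\perp$ embeds into $\widehat{K_m}$ while $\widehat{X/K_m^\perp}=(K_m^\perp)^\perp\supseteq K_m$, forcing $|X/K_m^\perp|=|K_m|$; the paper, by contrast, proves the needed instances of these facts from scratch. In the infinite-order case your route also differs: the paper stays elementary, using only the $k$ frequencies $g,\dots,g^k$ on which the transform of its normalized indicator has real part at least $1/2$, whereas you push the Haar measure forward to $\mathbb{T}$ (legitimate, since the image of a compact group is a closed infinite subgroup of $\mathbb{T}$, hence all of $\mathbb{T}$---this is the content of Lemmas \ref{Rangeofg} and \ref{Distribofg}) and quote the classical arc-indicator estimate. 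One small imprecision there: the pointwise claim $|\widehat{\mathbb{I}_{[0,\delta]}}(n)|\asymp\min\{\delta,|n|^{-1}\}$ is false as stated, since the coefficient vanishes whenever $n\delta\in 2\pi\mathbb{Z}$; what is true, and is all you need, is the lower bound $\gtrsim\delta$ on the $\asymp 1/\delta$ frequencies with $|n|\lesssim 1/\delta$, which already yields $\|\hat{f_\delta}\|_q\gtrsim\alpha(X)^{1-1/q}\delta^{1-1/q}$ and the divergence of the ratio. Your closing remark is also consistent with the paper: the lacunary/CLT machinery is indeed reserved there for the region $R_3$ and the discrete case, not for $R_2$.
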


We will actually prove a stronger condition:  If $(1/p,1/q)\in [0,\infty)^2$ with $1/p+1/q>1$, then $C_{p,q}=\infty$. Since $X$ is compact, the Haar measure $\alpha$ on $X$ is finite. Without loss of generality, we assume that $\alpha$ is a probability measure $\mathbb{P}$. On the other hand, since $\hat{X}$ is discrete, we have three possible cases:
\begin{itemize}
\item Case 1. $\hat{X}$ has an element with order infinity.
\item Case 2. Every element of $\hat{X}$ has a finite order, and the order set is not bounded.
\item Case 3. Every element of $\hat{X}$ has a finite order, and the order set is uniformly bounded.
\end{itemize}

We will prove Proposition \ref{PropR2XCom} through these three cases after the following lemmas that will be used repeatedly:

\begin{lem}\label{Rangeofg}
Suppose that $X$ is a compact LCA group, then for every element $g\in \hat{X}$, we have that the image $g(X)$ is a closed subgroup of $\mathbb{T}$
\begin{itemize}
\item If the order of $g$ is infinity, then $g(X)$ is a dense subgroup of $\mathbb{T}$. 
\item If the order of $g$ is $m<\infty$, then $g(X)=[e^{2\pi i/m}]:=\{e^{2\pi ik/m},~k=0,1,2,\cdots,m-1\}$.
\end{itemize}
\end{lem}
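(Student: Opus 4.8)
The plan is to first establish the structural claim that $g(X)$ is a closed subgroup of $\mathbb{T}$, and then to identify \emph{which} closed subgroup it is according to the order of $g$. Since $g\in\hat X$ is a group homomorphism, its image $g(X)$ is automatically a subgroup of $\mathbb{T}$. Because $X$ is compact and $g$ is continuous, $g(X)$ is a compact subset of the Hausdorff space $\mathbb{T}$, hence closed. This disposes of the first assertion with essentially no work.

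The heart of the argument is the classification of closed subgroups of $\mathbb{T}$: I claim that every closed subgroup $H\subseteq\mathbb{T}$ is either the finite cyclic group $[e^{2\pi i/m}]$ of $m$-th roots of unity for some positive integer $m$, or all of $\mathbb{T}$. To see this I would pass through the covering map $\pi:\mathbb{R}\to\mathbb{T}$, $\pi(t)=e^{2\pi i t}$, and consider $\tilde H:=\pi^{-1}(H)$, a closed subgroup of $\mathbb{R}$ containing $\mathbb{Z}=\ker\pi$. Invoking the standard classification of closed subgroups of $\mathbb{R}$ (namely $\{0\}$, a discrete group $a\mathbb{Z}$ with $a>0$, or all of $\mathbb{R}$) and using $\mathbb{Z}\subseteq\tilde H$ forces either $\tilde H=\tfrac1m\mathbb{Z}$ for some positive integer $m$ or $\tilde H=\mathbb{R}$; projecting back through $\pi$ gives $H=[e^{2\pi i/m}]$ or $H=\mathbb{T}$ respectively.

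Finally I would connect this dichotomy to the order of $g$. The key observation is that $g^n$ equals the trivial character $e$ exactly when $g(x)^n=1$ for every $x\in X$, i.e. exactly when $g(X)\subseteq[e^{2\pi i/n}]$. If $g(X)=[e^{2\pi i/m}]$, then since $[e^{2\pi i/m}]\subseteq[e^{2\pi i/n}]$ holds iff $m\mid n$, the least $n$ with $g^n=e$ is $m$, so $g$ has order $m$; this yields the second bullet. If instead $g(X)=\mathbb{T}$, then no finite power of $g$ is trivial (the image contains roots of unity of every order), so $g$ has infinite order, and being all of $\mathbb{T}$ it is in particular dense. Reading this contrapositively, infinite order of $g$ rules out the finite-cyclic case in the classification and leaves $g(X)=\mathbb{T}$, giving the first bullet.

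I expect the only nonroutine step to be the classification of closed subgroups of $\mathbb{T}$ (equivalently, the statement that a non-finite closed subgroup of the circle must be the whole circle); the reduction to closed subgroups of $\mathbb{R}$ via the covering map makes this standard, and the remaining bookkeeping relating $g(X)$ to the order of $g$ is elementary.
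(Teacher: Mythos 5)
Your proposal is correct, and it follows the same overall skeleton as the paper's proof (show $g(X)$ is a closed subgroup, classify the possibilities, then match the classification to the order of $g$), but the execution of the two key steps is genuinely different. For closedness, you invoke the standard fact that a continuous image of a compact space is compact, hence closed in the Hausdorff space $\mathbb{T}$; the paper instead runs a hands-on limit-point argument (lifting a convergent sequence $c_n\to c_0$ in $g(X)$ to preimages $x_n$ in $X$, extracting a limit point by compactness, and checking its image is $c_0$), which is longer and accomplishes the same thing. For the classification, you lift through the covering map $\pi:\mathbb{R}\to\mathbb{T}$ and quote the classification of closed subgroups of $\mathbb{R}$ containing $\mathbb{Z}$, whereas the paper works intrinsically on the circle: it splits on whether $1$ is a limit point of $g(X)$, getting density in one case, and in the other case takes the element $e^{is}\in g(X)$ closest to $1$ and shows $s\mid 2\pi$ and $g(X)=[e^{is}]$. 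Your route buys brevity and rests on two standard, citable facts; the paper's route is self-contained and stays inside $\mathbb{T}$, which matches its generally elementary style. One small bonus of your argument worth noting: combined with closedness, you conclude the stronger statement $g(X)=\mathbb{T}$ in the infinite-order case (dense and closed), which is also implicit in the paper and is what its Lemma on the distribution of $g$ actually uses. The final bookkeeping (order $m$ iff $g(X)\subseteq[e^{2\pi i/n}]$ iff $m\mid n$, least such $n$ is $m$) is essentially identical in both proofs.
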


\begin{proof} [Proof of Lemma \ref{Rangeofg}] 
We firstly prove that $g(X)$ is closed. If $g(X)$ is a finite set, then $g(X)$ is clearly closed. If $g(X)$ is not a finite set, then assume a sequence $\{c_n\}_{n=1}^\infty\subset g(X)$ and $c_n\rightarrow c_0\in\mathbb{T}$, then we will prove that $c_0\in g(X)$. In fact, construct a sequence, $\{x_n\}_{n=1}^\infty$, in $X$ by setting $x_n\in g^{-1}(\{c_n\})$, then we claim that the sequence $\{x_n\}_{n=1}^\infty$ has a limit point in $X$. Suppose otherwise, then for every element $x\in X$, there exists an open neighborhood, $U_{x}$, of $x$, such that the cardinality of $U_{x}\cap \{x_n\}_{n=1}^\infty$ is either $0$ or $1$. Then all $\{x\in X, U_{x}\}$ form an open cover of $X$. Because $X$ is compact, then there exist a finite subcover of $\{U_{x}\}$ that covers $\hat{X}$. Denote this subcover $\{U_1,U_2,\cdots,U_m\}$, then the cardinality of the sequence $\{x_n\}_{n=1}^\infty=\{x_n\}_{n=1}^\infty\cap \bigcup_{j=1}^m U_j=\bigcup_{j=1}^m(\{x_n\}_{n=1}^\infty\cap U_j)$ is finite, which yields a contradiction. Therefore $\{x_n\}_{n=1}^\infty$ has a limit point in $X$. Denote $x_0$ this limit point. We claim that $g(x_0)=c_0$. Suppose if otherwise $g(\hat{x}_0)=c_0'\neq
c_0$, then there exists an open neighborhood, $V_0$, of $c_0'$ such that $V_0\cap \{c_n\}_{n=1}^\infty$ has only finitely many elements. Hence the pre-image, $g^{-1}(V_0)\cap \{\hat{x}_n\}_{n=1}^\infty$ has only finitely many elements. However, $g^{-1}(V_0)$ is also an open neighborhood of $x_0$, then by the fact that $x_0$ is a limit point of $\{x_n\}_{n=1}^\infty$, which means that $g^{-1}(V_0)\cap \{x_n\}_{n=1}^\infty$ should have infinitely many elements, which leads to a contradiction. This provides the fact that $g(X)$ is closed in $\mathbb{T}$.\\

We claim that $g(X)$ is either dense on $\mathbb{T}$ or is a finite subgroup on $\mathbb{T}$. It is easy to see that $g(X)$ is a subgroup of $\mathbb{T}$. Suppose $1\in\mathbb{T}$ is a limit point of $g(X)$, then we have a sequence $ a_n\rightarrow 0$ such that $e^{ia_n}\in g(X)$, then we have $e^{i|a_n|}\in g(X)$ and $e^{ik|a_n|}\in g(X)$ for all $k\in\mathbb{Z}$, which provides that $g(X)$ is dense in $\mathbb{T}$. Further assume $1$ is not a limit point of $g(X)$, then since $g(X)$ is closed in $\mathbb{T}$, we take the element in $g(X)$ that is closest to 1, denote this element by $e^{is}$ (without loss of generality assume $s>0$), then we claim that $g(X)=[e^{is}]$ and $s|2\pi$. To see this, if $s$ does not divide $2\pi$, then we have a remainder $s'$ with $0<s'<s$ such that $e^{is'}\in g(X)$, contradicts to the fact that $e^{is}$ is closest to 1. Similarly, if there eixsts some $t$ such that $t\notin[e^{is}]$, then there exists a $k\in \mathbb{Z}$ such that $e^{it}\in (e^{iks}, e^{i(k+1)s})$, thus we have $g(X)\ni e^{i(t-ks)}\neq e^{is}$ with $|t-ks|<s$, which leads to the same contradiction. \\

By the above argument, if $g$'s order is infinity, then $g(X)$ must be dense on $\mathbb{T}$. If $g$ has order $m$, then clearly $g(X)$ is a subgroup of $[e^{2\pi i/m}]$, then $g(X)=[e^{2\pi i/m'}]$ with $m'|m$, then $g$ has order $m'$, which provides $m'=m$ and therefore $g(X)=[e^{2\pi i/m}]$. 
\end{proof}

\begin{lem}\label{Distribofg}
Suppose $X$ is a compact LCA group and $g\in\hat{X}$. We treat $g$ as a random variable: $X\rightarrow\mathbb{T}$, we have
\begin{itemize}
\item If the order of $g$ is infinity, denote $\lambda'$ the probability measure on $\mathbb{T}$ induced by the probability distribution of the random variable $g$, i.e. for every measurable set $M\subset\mathbb{T}$, $\lambda'(M):=\mathbb{P}(x\in X:~g(x)\in M)$. Denote $\lambda$ the Lebesgue measure on $\mathbb{T}$, then $\lambda'=\lambda/2\pi$ and $\lambda(\setminus g(X))=2\pi$. 
\item If the order of $g$ is $m<\infty$, then $g: X\rightarrow[e^{2\pi i/m}]$ is a random variable with uniform distribution on $[e^{2\pi i/m}]$.
\end{itemize}
\end{lem}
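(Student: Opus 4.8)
The plan is to exploit the single structural fact that $g$ is a continuous homomorphism while $\mathbb{P}$ is translation-invariant, from which the pushforward $\lambda':=g_*\mathbb{P}$ inherits an invariance that pins it down in both cases. First I would record that invariance. Fix $t\in X$ and set $c=g(t)\in g(X)$. Since $X$ is written additively, $g(x-t)=c^{-1}g(x)$, so for every Borel set $M\subset\mathbb{T}$, using the translation-invariance of $\mathbb{P}$ (substituting $x'=x-t$),
\[
\lambda'(cM)=\mathbb{P}\{x:g(x)\in cM\}=\mathbb{P}\{x:g(x-t)\in M\}=\mathbb{P}\{x':g(x')\in M\}=\lambda'(M).
\]
Hence $\lambda'$ is invariant under multiplication by every element of $g(X)$, and this one identity drives everything that follows.

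For the infinite-order case I would convert this invariance into a statement about the trigonometric moments $m_n:=\int_{\mathbb{T}}w^n\,d\lambda'(w)$. The invariance $\lambda'(cM)=\lambda'(M)$ gives $\int_{\mathbb{T}}(cw)^n\,d\lambda'(w)=\int_{\mathbb{T}}w^n\,d\lambda'(w)$, i.e.\ $(c^n-1)m_n=0$ for every $c\in g(X)$ and every $n$. By Lemma~\ref{Rangeofg}, when $g$ has infinite order $g(X)$ is dense (in particular infinite) in $\mathbb{T}$, so for each fixed $n\neq0$ it contains some $c$ that is not an $n$-th root of unity; therefore $m_n=0$ for all $n\neq0$, while trivially $m_0=1$. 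These are exactly the trigonometric moments of the normalized Lebesgue measure $\lambda/2\pi$, and since a finite Borel measure on $\mathbb{T}$ is determined by its moments (uniqueness of Fourier--Stieltjes coefficients), $\lambda'=\lambda/2\pi$. The remaining support assertion, that $\lambda(\overline{g(X)})=2\pi$, is then immediate from Lemma~\ref{Rangeofg}: density gives $\overline{g(X)}=\mathbb{T}$.

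For the finite-order case (order $m$), Lemma~\ref{Rangeofg} identifies $g(X)=[e^{2\pi i/m}]$, so $\lambda'$ is a probability measure carried by this finite cyclic group. The invariance now says that multiplication by any $c\in[e^{2\pi i/m}]$ permutes the atoms of $\lambda'$ while preserving their masses; since this group acts transitively on itself, all $m$ atoms carry equal mass $1/m$, which is precisely the statement that $g$ is uniformly distributed on $[e^{2\pi i/m}]$.

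The one genuinely delicate point is the passage, in the infinite-order case, from invariance under the merely dense subgroup $g(X)$ to the clean conclusion $\lambda'=\lambda/2\pi$. The moment computation is what makes this rigorous without having to argue weak-$*$ continuity of the rotation action on measures directly, and it is where I would concentrate the write-up; everything else reduces to the invariance identity and the structural description of $g(X)$ already furnished by Lemma~\ref{Rangeofg}.
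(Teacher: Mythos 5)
Your proposal is correct, and it diverges from the paper's proof at exactly the step you flag as delicate. Both arguments begin identically: the paper's Claim~1 is your invariance identity $\lambda'(cM)=\lambda'(M)$ for $c\in g(X)$, proved the same way from translation-invariance of the Haar measure, and both treatments of the finite-order case then read off uniformity from transitivity of the group action on its atoms. The difference is in the infinite-order case. The paper proceeds measure-theoretically: it shows $\lambda'$ has no atoms (if it had one, invariance would make every point of the infinite set $g(X)$ an atom of equal positive mass, contradicting finiteness of $\lambda'$), and then passes from invariance under the \emph{dense} subgroup $g(X)$, together with continuity of the distribution function, to invariance under all rotations, hence $\lambda'=\lambda/2\pi$; this last approximation step is compressed into a single sentence in the paper. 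You instead compute the Fourier--Stieltjes coefficients $m_n$ of $\lambda'$, observe that invariance forces $(c^n-1)m_n=0$ while $g(X)$ is infinite and the $n$-th roots of unity are finitely many, conclude $m_n=\delta_{n,0}$, and invoke uniqueness of Fourier--Stieltjes coefficients. Your route buys two things: it sidesteps entirely the dense-invariance-plus-atomlessness approximation argument (the least rigorous point of the paper's write-up), and it needs only that $g(X)$ is \emph{infinite}, not dense --- density enters your proof only for the support assertion $\overline{g(X)}=\mathbb{T}$, where Lemma~\ref{Rangeofg} in fact gives the stronger statement that $g(X)$ is closed, hence equal to $\mathbb{T}$. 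What it costs is an appeal to the uniqueness theorem for Fourier coefficients of finite Borel measures on $\mathbb{T}$ (standard, via Stone--Weierstrass), whereas the paper's argument, for all its sketchiness, is self-contained at the level of distribution functions.
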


\begin{proof}[Proof of Lemma \ref{Distribofg}] 
We have two claims: \\

\textbf{Claim 1. } For any $e^{ia}\in S:=g(X)$ and any measurable set, $E$, of $\mathbb{T}$, we have
\begin{eqnarray}\label{Transltinv}
\lambda'(E)=\lambda'(e^{ia}E)
\end{eqnarray}
where $\lambda'$ is the induced probability measure and $e^{ia}E:=\{e^{ia+ix}:e^{ix}\in E\}$.\\

To see this, note that if $f(x_0)=e^{ia}$
\begin{eqnarray}\label{eqvltset}
\{x_0x:g(x)\in E\}=\{y:g(y)\in e^{ia}E\}
\end{eqnarray}
In fact, for every $x_0x$ contained in the left hand side of (\ref{eqvltset}), we have
$$
g(x_0x)=g(x_0)g(x)=e^{ia}g(x)\in \{y:g(y)\in e^{ia}E\}
$$
On the other hand, for every $y$ contained on the right hand side of (\ref{eqvltset}), we have $y=x_0x$ for some $x\in X$, and 
$$
g(x)=g(x_0)^{-1}g(y)\in e^{-ia}e^{ia}E=E
$$
which provides (\ref{eqvltset}) and hence (\ref{Transltinv}) by the fact that $\mathbb{P}$ is a Haar measure. \\

If $g$ has order $m$, then it is clear to verify by Claim 1 that $g$ is a random variable with uniform distribution on $[e^{2\pi i/m}]$.\\

\textbf{Claim 2.} If $g$'s order is infinity, then the induced probability measure $\lambda'$ on $\mathbb{T}$ is a continuous measure, i.e. the distribution function on $[0,2\pi)$ is a continuous function. \\

To see this, suppose that $\lambda'$ is not a continuous measure, then by the fact that the distribution function on $[0,2\pi)$ is monotone, thus there exists countably many jumps. Assume $y_0\in \mathbb{T}$ is a jump point, thus $\lambda'(y_0)>0$ and $y_0\in g(X)$. Therefore by Claim 1, we have $\lambda'(1)=\lambda'(y_0)>0$, therefore for any $y\in g(X)$, we have $\lambda'(y)=\lambda'(1)=\lambda'(y_0)>0$ by Claim 1. Thus by the fact that $g(X)$ is dense in $\mathbb{T}$, we have $\lambda'(\mathbb{T})=\infty$, which contradicts to the fact that $\lambda'$ is a probability measure.\\

With the help of Claim 1 and Claim 2, we have that $\lambda'$ is translation invariant for any open set, thus $\lambda'$ is uniform distribution. Therefore $\lambda'$ coincides with $\lambda/2\pi$ on $\mathbb{T}$ on every open subset and hence on every closed subset of $\mathbb{T}$, which yields that $\lambda'=\lambda/2\pi$ on $\mathbb{T}$. So we have $\lambda(g(X))=2\pi \lambda'(g(X))=2\pi$. 
\end{proof}

\begin{lem}\label{Lemminprim}
Suppose the orders of all elements in $\hat{X}$ are uniformly bounded. Define $r$ to be the minimum integer such that there are infinitely many elements in $\hat{X}$ with order $r$, then $r$ is a prime.
\end{lem}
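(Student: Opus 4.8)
The plan is to argue by contradiction: whenever $r$ is composite I will manufacture infinitely many elements of order strictly less than $r$, contradicting the minimality in the definition of $r$. First I would record that $r$ is well defined and satisfies $r\ge 2$. Since $X$ is infinite and compact, $\hat{X}$ is infinite; by hypothesis every element has finite order and all orders lie in the finite set $\{1,\dots,N\}$, where $N$ is the uniform bound on the orders. Hence by the pigeonhole principle some order is attained by infinitely many elements, so the set of such integers is nonempty and the minimum $r$ exists; moreover only the identity has order $1$, so $r\ge 2$.

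Now suppose toward a contradiction that $r$ is composite, and fix a divisor $a$ with $1<a<r$, setting $b:=r/a$ so that $1<b<r$. Let $A:=\{g\in\hat{X}:\mathrm{ord}(g)=r\}$, which is infinite, and consider the map $\phi:A\to\hat{X}$ given by $\phi(g)=g^{a}$. Since $a\mid r$, each image $\phi(g)$ has order $r/\gcd(r,a)=r/a=b<r$. The argument then splits according to whether $\phi(A)$ is infinite or finite.

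If $\phi(A)$ is infinite, then $\hat{X}$ contains infinitely many elements of order $b<r$, directly contradicting the minimality of $r$. If instead $\phi(A)$ is finite, then by pigeonhole some fiber $S:=A\cap\phi^{-1}(h)$ is infinite for a suitable $h\in\phi(A)$. Fixing $g_{0}\in S$ and using that $\hat{X}$ is abelian, for every $g\in S$ we have $(g g_{0}^{-1})^{a}=g^{a}(g_{0}^{a})^{-1}=h h^{-1}=e$, so $\mathrm{ord}(g g_{0}^{-1})$ divides $a$. Because $g\mapsto g g_{0}^{-1}$ is injective, this produces infinitely many distinct elements whose orders divide $a$; discarding the single identity element and applying the pigeonhole principle over the finitely many divisors of $a$, some divisor $d$ with $2\le d\le a<r$ is the order of infinitely many elements, again contradicting the minimality of $r$. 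Hence $r$ cannot be composite, and therefore $r$ is prime.

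I expect the only delicate point to be the finite-image case: one must notice that injectivity of the translation $g\mapsto g g_{0}^{-1}$ transfers the infinitude of a single fiber into infinitely many elements of bounded order, after which a second pigeonhole argument over the divisors of $a$ is needed to pin down one fixed order $d<r$. The infinite-image case is immediate once one records that raising to the $a$-th power collapses the order from $r$ to $r/a$, and the well-definedness of $r$ is a routine consequence of the uniform bound on orders together with the infiniteness of $\hat{X}$.
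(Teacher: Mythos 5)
Your proof is correct and follows essentially the same route as the paper's: assume $r$ is composite, raise the infinitely many order-$r$ elements to a suitable power, and split on whether the resulting image set is infinite or finite, using the translation $g\mapsto g g_{0}^{-1}$ in the finite case to manufacture infinitely many elements of small order. You are in fact slightly more careful than the paper, which asserts that the translated elements have order exactly $k$ when they only have order dividing $k$; your closing pigeonhole argument over the divisors of $a$ (and your verification that $r$ is well defined) fills in precisely the details the paper glosses over.
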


\begin{proof}[Proof of Lemma \ref{Lemminprim}]
Assume that $r$ is not a prime, then $r=km$ with the integer $k>0$ and a prime $m$, then take a sequence $\{a_i\}_{i=1}^\infty$ such that every $a_i$ has order $r$, then the set defined by $\{a_i^k\}_{i=1}^\infty$ satisfies that every elements in this set has order $m<r$. We claim that the cardinality of this set $\{a_i^k\}_{i=1}^\infty$ is infinity, therefore this contradicts to the fact that $r$ is the minimum, which provides Lemma \ref{Lemminprim}. Suppose that $\{a_i^k\}$'s cardinality is finite then we have a subsequence of $\{a_i\}$ denoted by, without loss of generality, $\{a_i\}$ such that 
$$
a_1^k =a_2^k =a_3^k =\cdots
$$ 
therefore we have $(a_1 a_i^{-1})^k=\hat{e}$ ($\hat{e}$ is the identity in $\hat{X}$) holds for every $i\ge 2$. Further $a_1 a_i\neq 1$ since $a_1 \neq a_i$, $a_1 a_i\neq a_1 a_j$ for $i\neq j$. Therefore we have a set $\{a_1   a_i^{-1}\}_{i=2}^\infty$ with infinitely many elements, all of which have order $k<r$, contradicts to the fact that $r$ is minimum. 
\end{proof}

\begin{lem}\label{IndepEqOrd}
Suppose the orders of all elements in $\hat{X}$ are uniformly bounded. Define $r$ to be the same as in Lemma \ref{Lemminprim}, then $r$ is a prime by Lemma \ref{Lemminprim}. We construct a sequence $\{g_k\}_{k=1}^\infty$ in $\hat{X}$ such that 
\begin{itemize}
\item Every $g_k$ has order $r$.
\item For every $k\ge 2$, $g_k$ is not in the subgroup generated by $g_1,~g_2,\cdots,~g_{k-1}$
\end{itemize}
We can do this because there are infinitely many elements in $\hat{X}$ with order $r$. If we treat every $g_k$ as a random variable: $X\rightarrow\mathbb{T}$, then $g_k$'s are mutually independent and identically distributed.
\end{lem}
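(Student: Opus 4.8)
The plan is to prove the two assertions separately: the identical-distribution part is immediate from an earlier lemma, while mutual independence requires a short Fourier-analytic computation. For the identically distributed claim I would simply invoke Lemma~\ref{Distribofg}: since each $g_k$ has finite order $r$, its second bullet says that $g_k$ is uniformly distributed on the $r$-th roots of unity $[e^{2\pi i/r}]$. As this holds for every $k$, all the $g_k$ share the same law, so they are identically distributed.

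For mutual independence it suffices to fix an arbitrary $n$ and show that $g_1,\dots,g_n$ are jointly independent. First I would identify $[e^{2\pi i/r}]$ with $\mathbb{Z}_r=\mathbb{F}_r$, the field with $r$ elements (using that $r$ is prime by Lemma~\ref{Lemminprim}), and consider the joint map $\Phi:X\to(\mathbb{Z}_r)^n$ given by $\Phi(x)=(g_1(x),\dots,g_n(x))$. The goal is to show that the pushforward $\Phi_*\mathbb{P}$ of the Haar probability measure is the uniform (product) measure on $(\mathbb{Z}_r)^n$; since the uniform measure on a product equals the product of the uniform marginals, this yields independence and the uniform marginals simultaneously.

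To identify $\Phi_*\mathbb{P}$ I would compute its Fourier coefficients, since a measure on a finite abelian group is determined by them. The characters of $(\mathbb{Z}_r)^n$ are indexed by $(c_1,\dots,c_n)\in(\mathbb{Z}_r)^n$, and evaluating such a character along $\Phi$ produces $\prod_k g_k(x)^{c_k}$, which is precisely the value at $x$ of the character $g_1^{c_1}\cdots g_n^{c_n}\in\hat{X}$. Hence the $(c_1,\dots,c_n)$-Fourier coefficient of $\Phi_*\mathbb{P}$ equals $\int_X g_1^{c_1}\cdots g_n^{c_n}\,d\mathbb{P}$. By the orthogonality of characters on a compact group — equivalently, the translation-invariance argument already used in Claim~1 of Lemma~\ref{Distribofg} — this integral equals $1$ when the character is trivial and $0$ otherwise. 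Everything therefore reduces to showing that $g_1^{c_1}\cdots g_n^{c_n}=\hat{e}$ (the identity of $\hat{X}$) forces $c_1=\cdots=c_n=0$.

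This final point is where both the construction and the primality of $r$ are essential, and it is the main obstacle. Because $r$ is prime, the subgroup $\langle g_1,\dots,g_n\rangle$ is an elementary abelian $r$-group, i.e.\ an $\mathbb{F}_r$-vector space, in which ``subgroup generated by'' coincides with ``$\mathbb{F}_r$-linear span.'' The defining property that $g_k\notin\langle g_1,\dots,g_{k-1}\rangle$ for each $k$ then says exactly that $g_1,\dots,g_n$ are linearly independent over $\mathbb{F}_r$, so any relation $g_1^{c_1}\cdots g_n^{c_n}=\hat{e}$ forces all $c_k\equiv 0\pmod r$. Feeding this back, all nontrivial Fourier coefficients of $\Phi_*\mathbb{P}$ vanish while the trivial one equals $1$, which identifies $\Phi_*\mathbb{P}$ with the uniform measure on $(\mathbb{Z}_r)^n$ and completes the argument. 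Since $n$ was arbitrary, the full sequence $\{g_k\}$ is mutually independent.
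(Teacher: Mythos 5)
Your proof is correct, but it takes a genuinely different route from the paper's. Both arguments ultimately rest on the same algebraic fact --- that primality of $r$ plus the construction make $g_1,\dots,g_n$ linearly independent in the $\mathbb{F}_r$-vector space they generate (this is Claim 1 of the paper's proof, established by precisely the ``largest nonzero exponent'' manipulation you sketch) --- but they exploit it on different sides of the duality. The paper works on the group side: it sets $H_n:=\bigcap_{k=1}^n g_k^{-1}(\{1\})$, notes that the induced map $X/H_n\to\mathbb{F}_r^n$ is an injective homomorphism, proves surjectivity by a matrix-rank argument invoking Claim 1, deduces $\mathbb{P}(H_n)=r^{-n}$ from the coset count, and then upgrades this to the full joint distribution using translation invariance of the Haar measure on the fibers. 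You work on the Fourier side instead: the Fourier coefficients of the pushforward $\Phi_*\mathbb{P}$ on $(\mathbb{Z}_r)^n$ are Haar integrals of the characters $g_1^{c_1}\cdots g_n^{c_n}\in\hat{X}$, and orthogonality (Proposition \ref{Ortho}) plus linear independence forces every nontrivial coefficient to vanish, identifying $\Phi_*\mathbb{P}$ as the uniform product measure in one stroke. Your route is shorter and cleaner: it bypasses the coset-counting and the matrix argument, and it delivers the joint law directly rather than first computing $\mathbb{P}(H_n)$ and then patching fibers together; it is the standard character-sum equidistribution argument. What the paper's route buys is explicit structural information --- the isomorphism $X/H_n\cong\mathbb{F}_r^n$ and the value $\mathbb{P}(H_n)=r^{-n}$ --- which is reused verbatim in Lemma \ref{H_n^perp} and in Case 3 of Propositions \ref{PropR2XCom} and \ref{PropR3XCom}; note, though, that your argument recovers $\mathbb{P}(H_n)=r^{-n}$ anyway, as the probability that all of $g_1,\dots,g_n$ equal $1$ under the now-established joint uniformity.
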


\begin{proof}[Proof of Lemma \ref{IndepEqOrd}]
We firstly prove two claims:\\

\textbf{Claim 1.} For every $k$ and $n$ with $n\ge k$, $g_k$ is not in the subgroup generated by $A_{n,k}:=\{g_1,~\cdots,~g_{n}\}\setminus \{g_k\}$. \\

To see this, note that it suffices to prove the claim for $n>k$. Assume that $g_k\in [A_{n,k}]$, then $g_k$ can be written as 
\be\label{ReprOfg_k}
g_k=\prod_{g_i\in A_{n,k}}g_i^{a_i}
\ee
with each $a_i\in [0,r)\cap \mathbb{Z}$. Take the largest index $i$ in (\ref{ReprOfg_k}) such that $a_i\neq 0$, denote this index by $i_0$, thus $i_0>k$ by the definition of $g_k$. So we have that
\be\label{ReprOfg_k2}
g_{i_0}^{a_{i_0}}=g_k\prod_{g_i\in A_{n,k}, ~i<i_0}g_i^{r-a_i}
\ee
Then by the fact that $r$ is a prime, then there exists an integer $b_0$ such that $g_{i_0}^{b_0a_{i_0}}=g_{i_0}$, which means that $g_{i_0}$ is in the subgroup generated by $\{g_1,~\cdots,~g_{i_0-1}\}$ by (\ref{ReprOfg_k2}), contradicts to the definition of $g_{i_0}$, which provides Claim 1.\\

\textbf{Claim 2.} The events $g_k^{-1}(1)$ are mutually independent, i.e.
\be\label{g(1)Indep}
\mathbb{P}\Big(\bigcap_{k=1}^n g_k^{-1}(\{1\})\Big)= \frac{1}{r^n}
\ee

To see this, note that every $g_k$ is a continuous homomorphism from $X$ to $\mathbb{T}$, then $H_n:=\bigcap_{k=1}^n g_k^{-1}(\{1\})$ is an open subgroup of $X$ by Lemma \ref{Rangeofg}. Thus
$$
X=\bigsqcup_{x_i\in X} x_i+H_n
$$
where $x_i +H_n$ are the cosets of $X/H_n$, "$\sqcup$" means disjoint union. Since $\mathbb{P}$ is a Haar measure, we have 
$$
\mathbb{P}(H_n)=\frac{1}{\mbox{number~of~cosets}}
$$ 
Moreover the vector $(g_1,g_2,\cdots,g_n)$ is invariant on each coset $x_i+H_n$ and
$$
(g_1(x_i+H_n),g_2(x_i+H_n),\cdots,g_n(x_i+H_n) )\equiv (g_1(x_i),g_2(x_i),\cdots,g_n(x_i) )
$$
Furthermore if $x_i+H_n\neq x_j+ H_n$, we claim that
\be\label{DistinctVctor}
(g_1(x_i),g_2(x_i),\cdots,g_n(x_i) )\neq (g_1(x_j),g_2(x_j),\cdots,g_n(x_j) )
\ee
To see this, suppose otherwise we have the equality holds in (\ref{DistinctVctor}), then 
$$
(g_1(x_i-x_j),g_2(x_i-x_j),\cdots,g_n(x_i-x_j) )=(1,1,\cdots,1)
$$
therefore $x_i-x_j\in H_n$, contradicts to the fact that $x_iH_n\neq x_j H_n$. Hence the mapping $G:X/H_n\rightarrow [e^{2\pi i/r}]^n\cong \mathbb{F}_r^n$ (since $r$ is prime) defined by 
$$
G(x_i+H_n):=\big(g_1(x_i+H_n),g_2(x_i+H_n),\cdots,g_n(x_i+H_n)\big)
$$
is an injective group homomorphism. It suffices to prove that $G$ is surjective, i.e.
$$
G(X/H_n)=\mathbb{F}_r^n
$$
Suppose for contradiction that $G(X/H_n)$ is a proper subgroup of $\mathbb{F}_r^n$ and therefore is a proper linear subspace of $\mathbb{F}_r^n$. Hence the dimension of $G(X/H_n)$ is $n'<n$. Assume $l$ the number of cosets $\{x_i+H_n\}_{i=1}^{l}$ in $X$. Then consider the matrix
$$
P_n:=\begin{pmatrix}
  g_1(x_1+H_n) & g_2(x_1+H_n) & \cdots & g_n(x_1+H_n) \\
  g_1(x_2+H_n) & g_2(x_2+H_n) & \cdots & g_n(x_2+H_n) \\
  \vdots  & \vdots  & \ddots & \vdots  \\
  g_1(x_l+H_n) & g_2(x_l+H_n) & \cdots & g_n(x_l+H_n)
 \end{pmatrix}
$$
the matrix $P_n$ has rank $n'<n$, then there exists a column with index $k\le n$ that can be written as a linear combination of other column vectors:
$$
\begin{pmatrix}
g_k(x_1+H_n) \\
g_k(x_2+H_n) \\
\vdots  \\
g_k(x_l+H_n)
\end{pmatrix}
=\sum_{i\in [1,n],~i\neq k}a_i
\begin{pmatrix}
g_i(x_1+H_n) \\
g_i(x_2+H_n) \\
\vdots  \\
g_i(x_l+H_n)
\end{pmatrix}
$$
with some $a_1, a_2,\cdots, a_n\in \mathbb{F}_r$. Since $X=\bigsqcup_{i=1}^l x_i+H_n$, thus we have 
$$
g_k=\prod_{g_i\in A_{n,k}}g_i^{a_i}
$$ 
where $A_{n,k}$ is defined in Claim 1. This means that $g_k$ is in the group generated by $A_{n,k}$, contradicts to Claim 1, which ends the proof of Claim 2.\\

Next, note that $g_k's$ share the same image $[e^{2\pi i/r}]$ by Lemma \ref{Rangeofg}. Denote $J$ this image. So it suffices to prove that for every $(a_1,\cdots,a_n)\in J^n$, the probability of the random vector
\begin{eqnarray}\label{MutualyIndp}
\mathbb{P}\big((g_1,g_2,\cdots,g_n)=(a_1,a_2,\cdots,a_n)\big)=\frac{1}{r^n}
\end{eqnarray}
Note that if $(a_1,\cdots,a_n)$ is in the image of the random vector $(g_1,g_2,\cdots,g_n)$, then (\ref{MutualyIndp}) holds by the fact that $\mathbb{P}$ is a Haar measure. On the other hand, if the image  of $(g_1,g_2,\cdots,g_n)$ is a proper subset of $\prod_{k=1}^{n+1}J_k$, then the probability
$$
\mathbb{P}(X)=\mathbb{P}\big((g_1,g_2,\cdots,g_n)\in\prod_{k=1}^{n+1}J_k\big)<1
$$
which yields a contradiction, which ends the proof of Lemma \ref{IndepEqOrd}.
\end{proof}

\begin{lem}\label{LemCardnltOfMn}
Suppose the orders of all elements in $\hat{X}$ are uniformly bounded. Define $r$ and $\{g_k\}_{k=1}^\infty$ the same as in Lemma \ref{IndepEqOrd}, define $M_n$ the subgroup in $\hat{X}$ generated by $g_1,\cdots, g_n$, i.e. 
$$
M_n:=\{g_1^{i_1}g_2^{i_2}\cdots g_n^{i_n}:~0\le i_l\le r-1~~\mbox{for}~~l=1,2,\cdots,n\}
$$
Then the cardinality of $M_n$ is $r^n$. 
\end{lem}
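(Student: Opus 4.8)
The plan is to show that the ``coordinate map''
\[
\phi:\{0,1,\dots,r-1\}^n\longrightarrow M_n,\qquad \phi(i_1,\dots,i_n):=g_1^{i_1}g_2^{i_2}\cdots g_n^{i_n},
\]
is a bijection, so that $|M_n|=r^n$. Since $\hat X$ is abelian and each $g_l$ has order $r$, any product of the $g_l$ and their inverses can be rearranged and reduced modulo $r$ in each exponent, so every element of the subgroup generated by $g_1,\dots,g_n$ is of the displayed form; hence $\phi$ is surjective and $|M_n|\le r^n$. The whole content is therefore to prove that $\phi$ is \emph{injective}, i.e.\ that distinct exponent tuples give distinct group elements. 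Equivalently, viewing $M_n$ as an $\mathbb{F}_r$-vector space (legitimate because $r$ is prime and every generator has order $r$), this is the statement that $g_1,\dots,g_n$ are $\mathbb{F}_r$-linearly independent.

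To establish injectivity I would argue by contradiction using the ``largest index'' trick that already appears in Claim 1 of Lemma~\ref{IndepEqOrd}. Suppose $\phi(i_1,\dots,i_n)=\phi(j_1,\dots,j_n)$ for two distinct tuples. Writing $c_l:=i_l-j_l \bmod r$, we obtain a nontrivial relation $g_1^{c_1}\cdots g_n^{c_n}=\hat e$ with the $c_l\in[0,r)\cap\mathbb{Z}$ not all zero. Let $k$ be the largest index with $c_k\neq 0$. If $k=1$ this reads $g_1^{c_1}=\hat e$ with $0<c_1<r$, contradicting that $g_1$ has order $r$. If $k\ge 2$, then
\[
g_k^{c_k}=\prod_{l<k} g_l^{\,r-c_l},
\]
and since $r$ is prime there is an integer $b$ with $bc_k\equiv 1 \pmod r$, whence
\[
g_k=g_k^{\,bc_k}=\Big(\prod_{l<k} g_l^{\,r-c_l}\Big)^{b}\in[g_1,\dots,g_{k-1}].
\]
This contradicts the defining property of the sequence $\{g_k\}$ from Lemma~\ref{IndepEqOrd}, namely that $g_k$ lies outside the subgroup generated by $g_1,\dots,g_{k-1}$. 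Hence no nontrivial relation exists, $\phi$ is injective, and $|M_n|=r^n$.

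I do not expect a genuine obstacle here: the primality of $r$ (guaranteed by Lemma~\ref{Lemminprim}) is exactly what makes nonzero exponents invertible modulo $r$ and lets the largest-index reduction go through, and the independence of the generators is built into their construction in Lemma~\ref{IndepEqOrd}. The only point requiring a word of care is the surjectivity/canonical-form step, where one must invoke commutativity of $\hat X$ together with $g_l^r=\hat e$ to justify that every element of $M_n$ admits a representative with exponents in $[0,r)$; once this is noted, the counting is immediate.
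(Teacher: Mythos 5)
Your proof is correct and follows essentially the same route as the paper's: both reduce the count to the nonexistence of a nontrivial relation $g_1^{m_1}g_2^{m_2}\cdots g_n^{m_n}=\hat{e}$, isolate the largest nonzero exponent, and use the primality of $r$ to invert that exponent modulo $r$, forcing the corresponding $g_k$ into the subgroup generated by $g_1,\dots,g_{k-1}$ and contradicting the construction in Lemma~\ref{IndepEqOrd}. If anything, your write-up is slightly more careful than the paper's, which asserts ``without loss of generality $m_n\neq 0$'' where you explicitly take the largest nonzero index, handle the base case $k=1$ via the order of $g_1$, and spell out the surjectivity (canonical form) step.
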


\begin{proof} [Proof of Lemma \ref{LemCardnltOfMn}]
It is equivalent to prove that any two elements $g_1^{i_1}g_2^{i_2}\cdots g_n^{i_n}$ and $g_1^{j_1}g_2^{j_2}\cdots g_n^{j_n}$ are disjoint if the vectors $(i_1,\cdots,i_n)\neq(j_n,\cdots,j_n)$ (i.e. it is a linear space with basis $g_1,\cdots,g_n$). This is equivalent to
$$
g_1^{m_1}g_2^{m_2}\cdots g_n^{m_n}\neq \hat{e}~~\mbox{as~long~as}~~(m_1,m_2,\cdots,m_n)\neq \mbox{zero~vector~in~}\mathbb{F}_r^n
$$
where $\hat{e}$ is the identity in $\hat{X}$. Suppose we have equality holds for some $(m_1,m_2,\cdots,m_n)\neq \mbox{zero~vector}$, without loss of generality, we assume that $m_n\neq 0$. Thus we have
$$
g_n^{r-m_n}=g_1^{m_1}g_2^{m_2}\cdots g_{n-1}^{m_{n-1}}
$$
Therefore $g_n^{r-m_n}$ is in the group generated by $\{g_1,g_2,\cdots,g_{n-1}\}$, since $r$ is prime, then $r-m_n$ and $r$ are coprime, we have two integers $s$ and $t$ such that 
$$
s(r-m_n)+tr=1
$$
Therefore we have $g_n$ is in the group generated by $\{g_1,g_2,\cdots,g_{n-1}\}$, contradicts to the selection of $\{g_k\}$. 
\end{proof}

\begin{lem}\label{H_n^perp}
Based on the conditions of Lemma \ref{Lemminprim}, Lemma \ref{IndepEqOrd} and Lemma \ref{LemCardnltOfMn}, define $H_n^\perp$ the set of all elements in $\hat{X}$ such that their restriction on $H_n$ is trivial. Then 
\begin{eqnarray}
H_n^{\perp}=M_n
\end{eqnarray}
\end{lem}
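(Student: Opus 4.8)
The plan is to prove the two inclusions $M_n\subseteq H_n^\perp$ and $H_n^\perp\subseteq M_n$ separately, establishing the second by a cardinality count. The inclusion $M_n\subseteq H_n^\perp$ is immediate from the definitions: since $H_n=\bigcap_{k=1}^n g_k^{-1}(\{1\})$, every generator $g_k$ satisfies $g_k(x)=1$ for all $x\in H_n$, so each $g_k$ lies in $H_n^\perp$. As $H_n^\perp$ is a subgroup of $\hat{X}$ (annihilators are closed under products and inverses), it contains the subgroup $M_n=\langle g_1,\dots,g_n\rangle$ generated by the $g_k$.

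For the reverse inclusion I would show that both groups are finite of the same size $r^n$, so that the established containment forces equality. By Lemma \ref{LemCardnltOfMn} we already have $|M_n|=r^n$, so it suffices to compute $|H_n^\perp|$. Recall from the proof of Claim 2 in Lemma \ref{IndepEqOrd} that $H_n$ is an \emph{open} subgroup of the compact group $X$; hence the quotient $X/H_n$ is finite and discrete. Any $\gamma\in H_n^\perp$ is trivial on $H_n$ and therefore factors through $X/H_n$, yielding a character of the finite abelian group $X/H_n$; conversely, every character of $X/H_n$ lifts (continuously, since $H_n$ is open) to an element of $H_n^\perp$. Thus $H_n^\perp$ is in bijection with the dual of the finite abelian group $X/H_n$, and since such a group has exactly as many characters as its order, $|H_n^\perp|=|X/H_n|$.

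It remains to evaluate $|X/H_n|$, and this is precisely what the surjectivity argument already carried out in the proof of Claim 2 of Lemma \ref{IndepEqOrd} provides: the map $G\colon X/H_n\to\mathbb{F}_r^n$ sending a coset $x_i+H_n$ to $\big(g_1(x_i),\dots,g_n(x_i)\big)$ is shown there to be an injective and surjective group homomorphism, so $X/H_n\cong\mathbb{F}_r^n$ and $|X/H_n|=r^n$. Combining the two steps gives $|H_n^\perp|=r^n=|M_n|$, and together with $M_n\subseteq H_n^\perp$ this yields $M_n=H_n^\perp$.

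The main obstacle is making the duality/counting step airtight: one must confirm that the characters of $X$ vanishing on the open subgroup $H_n$ are exactly the (automatically continuous) characters of the finite quotient $X/H_n$, and that there are precisely $|X/H_n|$ of them. Once the identification $X/H_n\cong\mathbb{F}_r^n$ from the earlier lemma is invoked, the remaining bookkeeping is routine, since everything then reduces to comparing two finite groups of equal order one of which contains the other.
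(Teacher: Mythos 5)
Your proof is correct, but it follows a genuinely different route from the paper's. The paper, after noting $M_n\subseteq H_n^\perp$, argues by contradiction: it takes $g'\in H_n^\perp\setminus M_n$, reduces (by passing to a power) to the case where $g'$ has prime order, uses the divisibility of Haar measures of subgroups together with Lemma \ref{Distribofg} to force that prime to equal $r$, and then applies the independence machinery of Lemma \ref{IndepEqOrd} to the augmented list $g_1,\dots,g_n,g'$ to get $\mathbb{P}(H_n)=\mathbb{P}\big(H_n\cap (g')^{-1}(\{1\})\big)=r^{-(n+1)}$, contradicting $\mathbb{P}(H_n)=r^{-n}$. You instead make a purely algebraic counting argument: $H_n$ is open, so $X/H_n$ is finite, the annihilator $H_n^\perp$ is canonically identified with the dual of $X/H_n$ (continuity of lifts being automatic because $H_n$ is open and the quotient is discrete), and the isomorphism $X/H_n\cong\mathbb{F}_r^n$ already established inside the proof of Lemma \ref{IndepEqOrd} gives $|H_n^\perp|=|X/H_n|=r^n=|M_n|$, whence equality from the easy inclusion. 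Your approach buys two things: it sidesteps the paper's somewhat delicate ``without loss of generality $g'$ has prime order'' step (replacing $g'$ by a power could in principle land inside $M_n$, so that reduction needs more care than the paper gives it), and it needs no further probabilistic input beyond what the earlier lemma already proved. The mild cost is that you cite an intermediate fact (surjectivity of the map $G\colon X/H_n\to\mathbb{F}_r^n$) from the interior of another lemma's proof rather than a stated result; if you prefer to rely only on stated conclusions, note that $|X/H_n|=1/\mathbb{P}(H_n)=r^n$ follows directly from the coset decomposition and the identity $\mathbb{P}(H_n)=r^{-n}$ of Lemma \ref{IndepEqOrd}, which patches this cosmetic point without changing your argument.
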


\begin{proof} [Proof of Lemma \ref{H_n^perp}]
Clearly $H_n^{\perp}\supset M_n$. Assume for contradiction that there exists a $g'\in H_n^{\perp}$ and $g'\notin M_n$. Without loss of generality assume that $g'$ has a prime order $r'$ (i.e. if not then take some power of $g'$). Thus $g'$ is not in the group generated by $g_1,\cdots,g_n$ and $H_n$ is a sugroup of $(g')^{-1}(\{1\})$, then we have that $\mathbb{P}(H_n)$ divides $\mathbb{P}((g')^{-1}(\{1\}))$, thus by Lemma \ref{Distribofg}, we have $1/r^n$ divides $1/r'$, which means that $r'|r^n$, thus $r'=r$ and we have
$$
H_n=\Big(\bigcap_{k=1}^n g_k^{-1}(\{1\})\Big)\cap (g')^{-1}(\{1\})
$$
Then apply Lemma \ref{IndepEqOrd} to the last equality of the following equation:
$$
\frac{1}{r^n}=\mathbb{P}(H_n)=\mathbb{P}\Bigg(\Big(\bigcap_{k=1}^n g_k^{-1}(\{1\})\Big)\cap (g')^{-1}(\{1\}) \Bigg)=\frac{1}{r^{n+1}}
$$
which yields a contradiction. 
\end{proof}

We have a direct conclusion from Lemma \ref{H_n^perp}.

\begin{lem}\label{LemNumbofCosetsofHn^perp}
The cardinality of $\hat{X}/H_n^{\perp}$ is infinity. 
\end{lem}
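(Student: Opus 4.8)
The plan is to deduce this immediately from Lemma \ref{H_n^perp} and Lemma \ref{LemCardnltOfMn}, once we record the one genuinely new input, namely that $\hat{X}$ itself is infinite. Since $X$ is a non-finite compact LCA group, its dual $\hat{X}$ must be infinite: by Proposition \ref{Ortho} the characters $\gamma\in\hat{X}$ form an orthogonal basis of $L^2(X)$, and because $X$ is infinite the space $L^2(X)$ is infinite-dimensional, so $\hat{X}$ must contain infinitely many elements. Equivalently, one may invoke Pontryagin duality: were $\hat{X}$ finite, then $X\cong\hat{\hat{X}}$ would be finite, contradicting our standing assumption that the group is non-finite.

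Next I would combine the structural identifications already established in the two preceding lemmas. By Lemma \ref{H_n^perp} we have the identity $H_n^{\perp}=M_n$, and by Lemma \ref{LemCardnltOfMn} the subgroup $M_n$ has cardinality exactly $r^n$, hence is finite. Therefore $H_n^{\perp}$ is a finite subgroup of the infinite abelian group $\hat{X}$, and the quotient $\hat{X}/H_n^{\perp}=\hat{X}/M_n$ is a well-defined group whose cardinality equals the index $[\hat{X}:H_n^{\perp}]$.

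The conclusion is then purely group-theoretic: a finite subgroup of an infinite group has infinitely many cosets, because $\hat{X}$ is the disjoint union of the cosets of $H_n^{\perp}$, each of the same finite cardinality $|H_n^{\perp}|=r^n$, so finitely many such cosets could only cover a finite set. Hence $[\hat{X}:H_n^{\perp}]$ must be infinite, i.e. $\hat{X}/H_n^{\perp}$ is infinite. There is essentially no obstacle in this argument; the only step requiring any care is the justification that $\hat{X}$ is infinite, which is precisely where the hypothesis that $X$ is non-finite enters.
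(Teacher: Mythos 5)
Your proof is correct and matches the paper's reasoning: the paper states this lemma as a ``direct conclusion'' of Lemma \ref{H_n^perp}, the implicit argument being exactly what you wrote, namely that $H_n^{\perp}=M_n$ is a finite subgroup (of cardinality $r^n$ by Lemma \ref{LemCardnltOfMn}) of the infinite group $\hat{X}$, so it has infinitely many cosets. Your filling in of the one tacit ingredient---that $\hat{X}$ is infinite because $X$ is an infinite compact group (via the infinite-dimensionality of $L^2(X)$ or Pontryagin duality)---is a welcome addition rather than a deviation.
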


\begin{lem}\label{DualRestrictionInt}
Suppose that $X$ is a compact LCA group with the probability Haar measure, for any closed subgroup $H<X$ with positive probability and $\hat{x}\in \hat{X}$ such that the restriction $\hat{x}\big|_H$ is not trivial, then we have 
$$
\int_H\hat{x}(x)\mathbb{P}(dx)=0
$$ 
\end{lem}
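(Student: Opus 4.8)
The plan is to deduce this from the translation invariance of the Haar measure $\mathbb{P}$, which is precisely the mechanism behind the orthogonality relations for characters on a compact group, so essentially no new machinery is needed. Since the restriction $\hat{x}\big|_H$ is assumed nontrivial, there exists some $h_0\in H$ with $\hat{x}(h_0)\neq 1$. The entire argument then reduces to exploiting the invariance of $\mathbb{P}$ under the translation $x\mapsto x+h_0$, which maps the subgroup $H$ bijectively onto itself because $h_0\in H$.

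Concretely, I would apply translation invariance to the function $f:=\hat{x}\cdot\mathbb{I}_H$ on $X$, where $\mathbb{I}_H$ is the indicator function of $H$, so that $\int_X f(x)\,\mathbb{P}(dx)=\int_H\hat{x}(x)\,\mathbb{P}(dx)$. Because $h_0\in H$ we have $\mathbb{I}_H(x+h_0)=\mathbb{I}_H(x)$ for every $x$, and because $\hat{x}$ is a homomorphism from $(X,+)$ into $\mathbb{T}$ we have $\hat{x}(x+h_0)=\hat{x}(x)\hat{x}(h_0)$. Hence
\[
\int_H\hat{x}(x)\,\mathbb{P}(dx)=\int_X f(x+h_0)\,\mathbb{P}(dx)=\hat{x}(h_0)\int_H\hat{x}(x)\,\mathbb{P}(dx),
\]
where the first equality is the translation invariance $\int_X f(x+h_0)\,\mathbb{P}(dx)=\int_X f(x)\,\mathbb{P}(dx)$ and the second is the explicit computation of $f(x+h_0)$. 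Rearranging gives $(1-\hat{x}(h_0))\int_H\hat{x}(x)\,\mathbb{P}(dx)=0$, and since $\hat{x}(h_0)\neq 1$ the integral must vanish.

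I do not expect any serious obstacle here; the only points requiring care are that the translation $x\mapsto x+h_0$ genuinely preserves $H$ (which holds since $H$ is a subgroup and $h_0\in H$) and that one may pass freely between $\int_H$ and $\int_X$ through the indicator $\mathbb{I}_H$. It is worth emphasizing that the positive-probability hypothesis on $H$ plays no role in the argument above, which is valid for any closed subgroup; its presence in the statement merely flags that the lemma will be invoked in the sequel in the non-degenerate situation $\mathbb{P}(H)>0$, where the vanishing of the integral reflects genuine cancellation rather than the triviality $\mathbb{P}(H)=0$.
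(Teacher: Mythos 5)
Your proof is correct. It differs from the paper's in a mild but genuine way: the paper restricts $\hat{x}$ to $H$, observes that this restriction is a nontrivial character of the compact LCA group $H$ and that $\mathbb{P}$ restricted to $H$ is again a Haar measure on $H$, and then simply cites the orthogonality of characters (Proposition~\ref{Ortho}) applied on $H$ -- i.e.\ integrates the nontrivial character $\hat{x}\big|_H$ against the trivial one. You instead unwind that orthogonality argument and run it directly on $X$: pick $h_0\in H$ with $\hat{x}(h_0)\neq 1$ and exploit invariance of $\mathbb{P}$ under translation by $h_0$, which fixes $H$ setwise, to get $\bigl(1-\hat{x}(h_0)\bigr)\int_H\hat{x}\,d\mathbb{P}=0$. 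What your route buys is self-containedness: you never need to regard $H$ as a compact group in its own right, nor verify the paper's (true but unproved) claim that $\mathbb{P}\big|_H$ is a Haar measure on $H$ -- a claim which, as stated, actually requires $\mathbb{P}(H)>0$, since the zero measure is not a Haar measure. This also substantiates your closing remark: in your argument the hypothesis $\mathbb{P}(H)>0$ is genuinely superfluous, whereas in the paper's argument it is needed to make the reduction to orthogonality on $H$ legitimate (though when $\mathbb{P}(H)=0$ the conclusion is trivial anyway). What the paper's route buys is brevity, since the orthogonality relation is already available as Proposition~\ref{Ortho}.
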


\begin{proof} [Proof of Lemma \ref{DualRestrictionInt}]
Note that the restriction of $\hat{x}$ on $H$ is an element of $\hat{H}$ and is not identity, and it is also easy to verify that the restriction of $\mathbb{P}$ on $H$ is still a Haar measure. By the fact that $H$ is a compact LCA group, then the orthogonality of the elements in $\hat{H}$ (see Proposition \ref{Ortho}) provides the lemma.
\end{proof}

\begin{proof} [Proof of Proposition \ref{PropR2XCom}, Case 1]
Assume that $g$ is the element in $\hat{X}$ with order infinity. Set $G$ to be the subgroup in $\hat{X}$ generated by $g$. Define for $k\ge 0$,
$$
U_k:=\{x\in X:~g(x)\in(e^{-\pi i/(3k)},e^{\pi i/(3k)})\}
$$
Thus $U_k$ is decreasing (i.e. $U_{k+1}\subset U_k$) and for every $1\le j\le k$, $g^j(U_k)=(g(U_k))^j\subset(e^{-\pi i/3},e^{-\pi i/3})$. Moreover $U_k=-U_k$ by the fact that $g(-x)=(g(x))^{-1}$. Hence we have
\begin{eqnarray}\label{Re(g_j)>1/2}
\mbox{For~every~element~} x\in U_k,~Re(g^j(-x))>1/2~\mbox{for}~ 1\le j\le k
\end{eqnarray}
% Moreover, we claim that 
%\begin{eqnarray}
%\bigcap_{k\ge 0} U_k=\{e\}
%\end{eqnarray}
%To see this, note that for every $x\neq e$, we have %$g(x)=e^{iy}$ for some $y\in [-\pi,\pi)$. If $y\notin (-\pi/3,%\pi/3)$, then $y\notin U_1$. Further if $y\in (-\pi/3,\pi/3)$, %then there exist some $k>0$ such that $e^{iky}\notin (e^{-\pi %i/3},e^{-\pi i/3})$, therefore $g^k(x)=e^{iky}\notin (e^{-\pi %i/3},e^{-\pi i/3})$, hence $x\notin U_k$, which provides (16). On %the other hand, it is easy to see that if $X$ is not finite, %then $\alpha(e)=0$ by the fact that the Haar measure %$\mathbb{P}(X)=1<\infty$.\\
Moreover, by Lemma \ref{Rangeofg}, we have $\mathbb{P}(U_k)=\frac{1}{3k}$. Define $f_k(x):=\frac{1}{\mathbb{P}(U_k)}\mathbb{I}_{U_k}(x)$, where $\mathbb{I}_{U_k}(x)$ is the indicator function of $U_k$, then $\|f_k\|_p=(3k)^{1-1/p}$. On the other hand, by Proposition \ref{HarMeas},
\begin{eqnarray*}
\|\widehat{f_k}\|_q &=&\Big(\int_{\hat{X}}|\widehat{f_k}(\hat{x})|^q\hat{\alpha}(d\hat{x})\Big)^{1/q}
\ge \Big(\sum_{j=1}^k |\widehat{f_k}(g^j)|^q \Big)^{1/q}\\
&\ge &\Big( \sum_{j=1}^k \Big|\int_{U_k}\frac{1}{\mathbb{P}(U_k)}g^j(-x)\alpha(dx)\Big|^q \Big)^{1/q}\\
&\ge &\Big( \sum_{j=1}^k \Big|\int_{U_k}\frac{1}{\mathbb{P}(U_k)}Re\big(g^j(-x)\big)\alpha(dx)\Big|^q \Big)^{1/q}\\
&\ge & \frac{k^{1/q}}{2}~~(\mbox{by~(\ref{Re(g_j)>1/2})})
\end{eqnarray*}
which yields
$$
\frac{\|\widehat{f_k}\|_q }{\|f_k\|_p}\ge \frac{3^{1/p-1}}{2}k^{1/p+1/q-1}\rightarrow\infty
$$
as $k\rightarrow\infty$, which ends the proof of Case 1. 
\end{proof}

\begin{proof} [Proof of Proposition \ref{PropR2XCom}, Case 2]
Take a sequence $\{g_n\}$ in $\hat{X}$ such that each $g_n$ has order $m_n$ and $m_n\nearrow\infty$. Define 
$$
U_n:=\{x\in X:~g_n(x)=1\}
$$
Thus $U_n=-U_n$ by the fact that $U_n$ is a subgroup of $X$. Hence we have
\begin{eqnarray}\label{g_n^j=1onUn}
\mbox{For~every~}x\in U_n,~g_n^j(-x)=1~\mbox{for~} 1\le j\le m_n
\end{eqnarray} 
Then by Lemma \ref{Distribofg}, $\mathbb{P}(U_n)= 1/m_n$. Define $f_n:=\frac{1}{\mathbb{P}(U_n)}\mathbb{I}_{(U_n)}$. We have $\|f_n\|_p=(m_n)^{1-1/p}$. On the other hand
\begin{eqnarray*}
\|\widehat{f_n}\|_q&=&\Big(\int_{\hat{X}}|\widehat{f_n}(\hat{x})|^q\hat{\alpha}(d\hat{x})\Big)^{1/q}
\ge \Big( \sum_{j=1}^{m_n} |\widehat{f_n}(g_n^j)|^q \Big)^{1/q}\\
&\ge & \Big( \sum_{j=1}^{m_n} \Big|\int_{U_n}\frac{1}{\mathbb{P}(U_n)}g_n^{j}(-x)\alpha(dx)\Big|^q \Big)^{1/q}\\
&=& \Big( \sum_{j=1}^{m_n} \Big|\int_{U_n}\frac{1}{\mathbb{P}(U_n)}\alpha(dx)\Big|^q \Big)^{1/q}~~(\mbox{by (\ref{g_n^j=1onUn})})\\
&=&m_n^{1/q}
\end{eqnarray*}
which yields
$$
\frac{\|\widehat{f_n}\|_q}{\|f_n\|_p}\ge (m_n)^{1/p+1/q-1}\rightarrow\infty
$$
as $n\rightarrow\infty$, which ends the proof of Case 2. 
\end{proof}

\begin{proof} [Proof of Proposition \ref{PropR2XCom}, Case 3]
Suppose the orders of all elements in $\hat{X}$ are uniformly bounded. Define $r$, $\{g_k\}_{k=1}^\infty$, $H_n$ and $M_n$ the same as in Lemma \ref{Lemminprim}, Lemma \ref{IndepEqOrd}, Lemma \ref{LemCardnltOfMn} and Lemma \ref{H_n^perp}. Define a sequence of functions $f_n:=\frac{1}{\mathbb{P}(H_n)}\mathbb{I}_{H_n}$, then $\|f_n\|_p=\mathbb{P}(H_n)^{1/p-1}=(r^n)^{1-1/p}$ by Lemma \ref{IndepEqOrd}. On the other hand, we have, by Proposition \ref{HarMeas}
\begin{eqnarray}\label{qNormUnifBdd}
\|\widehat{f_n}\|_q=	\Big(\int_{\hat{X}}|\widehat{f_n}(\hat{x})|^q\hat{\mathbb{P}}(d\hat{x})\Big)^{1/q}\ge \Big(\sum_{g\in M_n}|\widehat{f_n}(g)|^q\Big)^{1/q}
\end{eqnarray}
Then by Lemma \ref{H_n^perp}, for every $g\in M_n$, $g$ is trivial on $H_n$. So we have that
$$
\widehat{f_n}(g)=\frac{1}{\mathbb{P}(H_n)}\int_{H_n}g(-x)\mathbb{P}(dx)=1
$$
Therefore by (\ref{qNormUnifBdd}) and Lemma \ref{LemCardnltOfMn}, we have
$$
\|\widehat{f_n}\|_q\ge \Big(\sum_{g\in M_n}|\widehat{f_n}(g)|^q\Big)^{1/q}=(r^n)^{1/q}
$$
and
$$
\frac{\|\widehat{f_n}\|_q}{\|f_n\|_p}\ge (r^n)^{1/p+1/q-1}\rightarrow\infty
$$
as $n\rightarrow\infty$, what ends the proof of Case 3 and provides Proposition 4.1.2.
\end{proof}

\begin{prop}\label{PropR3XCom}
If $(1/p,1/q)\in R_3$, then $C_{p,q}= \infty$.
\end{prop}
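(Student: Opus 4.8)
The plan is to exhibit, for each $(1/p,1/q)$ in the interior portion of $R_3$, a sequence of test functions $f_n$ with $\|\widehat{f_n}\|_q/\|f_n\|_p\to\infty$, and then to recover the boundary $1/p=0$ by a convexity argument. Throughout I normalize $\alpha=\mathbb{P}$ to be a probability measure, so that $\widehat{\alpha}$ is counting measure with $\widehat{\alpha}(\gamma)=1$ by Proposition \ref{HarMeas}, and I recall that by (\ref{Dta}) the transform of a single character $g\in\hat{X}$ is the indicator $\widehat{g}=\mathbb{I}_{\{g\}}$. The basic idea is to take $f_n=\sum_{k=1}^n g_k$ for a suitable family of distinct characters $g_1,\dots,g_n$. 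Then $\widehat{f_n}=\mathbb{I}_{\{g_1,\dots,g_n\}}$, so $\|\widehat{f_n}\|_q=n^{1/q}$ exactly, and the whole problem collapses to arranging that $\|f_n\|_p\le C_p\sqrt{n}$. Since $R_3$ forces $p\ge 2$, the ratio is then at least $C_p^{-1}n^{1/q-1/2}$, which tends to infinity precisely because $1/q>1/2$.

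To choose the $g_k$ I would reuse the trichotomy already set up for Proposition \ref{PropR2XCom}. In Case 1, where $\hat{X}$ has an element $g$ of infinite order, I take a Hadamard-lacunary sequence of powers $g_k:=g^{n_k}$ with $n_{k+1}/n_k\ge\Lambda>1$; these are distinct, and by Lemma \ref{Distribofg} the pushforward of $g$ is uniform on $\mathbb{T}$, so $f_n$ has the same distribution as the lacunary trigonometric sum $\sum_{k=1}^n e^{in_k\theta}$. In Cases 2 and 3 I instead take genuinely (probabilistically) independent characters: in Case 3 this is exactly the i.i.d. family of order-$r$ characters produced in Lemma \ref{IndepEqOrd}, and in Case 2 one selects characters $g_k$ of orders $m_k\to\infty$ that are independent as random variables by the same inductive coset argument. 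In every case each $g_k$ is nontrivial (so it has mean zero, $\int_X g_k\,d\mathbb{P}=0$) and has modulus one, whence $\|f_n\|_2=\sqrt{n}$ by orthogonality.

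The crux is the upper bound $\|f_n\|_p\le C_p\sqrt{n}$ for $p>2$; for $p\le 2$ it is immediate from $\|f_n\|_p\le\|f_n\|_2=\sqrt{n}$ since $\mathbb{P}$ is a probability measure. Here I would invoke the central limit theorems collected in the preliminaries. In Cases 2 and 3 the Lyapunov condition (\ref{LyapCdt}) holds trivially, since the summands are bounded and $s_n^2=n$, so $S_n/\sqrt{n}$ is asymptotically Gaussian by Theorem \ref{LyapCLT}; in Case 1 the analogous Gaussian behavior of the normalized lacunary partial sums is Theorem \ref{Thm:DependCLT}. Convergence in distribution to a Gaussian $Z$ upgrades to convergence of $p$-th moments once one verifies uniform integrability of $|S_n/\sqrt{n}|^p$, which follows from a uniform bound on a slightly higher moment (Marcinkiewicz--Zygmund/Rosenthal-type estimates in the independent case, and the classical $L^r$-comparison for lacunary series in Case 1). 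This gives $\|f_n\|_p=(1+o(1))\|Z\|_p\sqrt{n}$, hence $\|f_n\|_p\le C_p\sqrt{n}$ for every $p<\infty$. I expect this moment-control step to be the main technical obstacle, since distributional convergence alone does not control $L^p$ norms.

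Finally I would treat the boundary $1/p=0$ (that is, $p=\infty$), which is genuinely not covered by the above: at the identity $e$ one has $g_k(e)=1$ for all $k$, so $f_n(e)=n$ and $\|f_n\|_\infty=n$ rather than $\sqrt{n}$. For these points I would use the Riesz--Thorin convexity of $\log K$ (Theorem \ref{RiesT}) anchored at the finite value $K(1/2,1/2)=\log 1=0$ coming from $C_{2,2}=1$. Given a boundary point $P=(0,1/q)$ with $1/2<1/q\le 1$ and the finite point $B=(1/2,1/2)$, each interior point $A_t:=(1-t)P+tB=\big(t/2,\,(1-t)/q+t/2\big)$ with $t\in(0,1)$ lies in $R_3$ with $1/p>0$, hence has $K(A_t)=\infty$ by the previous paragraphs; convexity then forces $\infty=\log K(A_t)\le(1-t)\log K(P)+t\cdot 0$, so $\log K(P)=\infty$. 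This propagates infinitude to the entire segment $\{1/p=0,\ 1/2<1/q\le 1\}$, and together with the interior argument establishes $C_{p,q}=\infty$ throughout $R_3$.
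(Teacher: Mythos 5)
Your plan for Cases 1 and 3 is viable and genuinely different from the paper's proof: with $\alpha(X)=1$, summing $n$ distinct characters gives $\widehat{f_n}=\mathbb{I}_{\{g_1,\dots,g_n\}}$, hence $\|\widehat{f_n}\|_q=n^{1/q}$ exactly, and the bound $\|f_n\|_p\le C_p\sqrt{n}$ for $2\le p<\infty$ does follow from the lacunary $L^p$--$L^2$ comparison in Case 1 and from Rosenthal/Marcinkiewicz--Zygmund applied to the i.i.d.\ characters of Lemma~\ref{IndepEqOrd} in Case 3 (your CLT-plus-uniform-integrability detour is circular, since the uniform integrability is itself obtained from exactly these moment inequalities, but the underlying tools are correct). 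The genuine gap is Case 2. You claim one can select characters $g_k$ of orders $m_k\to\infty$ that are independent ``by the same inductive coset argument,'' but that argument (Lemma~\ref{IndepEqOrd}) hinges on the hypothesis of Case 3, namely that infinitely many elements of $\hat{X}$ share a single prime order $r$; in Case 2 no such $r$ need exist, and worse, independent characters need not exist at all. Take $X=\mathbb{Z}_p$, the $p$-adic integers, so that $\hat{X}=\mathbb{Z}(p^\infty)$ is the Pr\"ufer group: every element has finite order and the orders are unbounded, but the nontrivial cyclic subgroups of $\mathbb{Z}(p^\infty)$ form a chain, so for any two nontrivial characters one is a power of the other, say $g_1=g_2^m\neq\hat{e}$. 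Then $\mathbb{E}\big[g_1\overline{g_2^m}\big]=1$, whereas independence would force this to equal $\mathbb{E}[g_1]\,\mathbb{E}\big[\overline{g_2^m}\big]=0$ (nontrivial characters have mean zero). So no two nontrivial characters of $\mathbb{Z}_p$ are independent, and your Case 2 construction cannot even begin. The paper's own Case 2 avoids independence entirely, using powers of a single character $g_n$ of large order $m_n$: its test functions are the truncations $\sum_{k=2}^{m_n-1}\frac{e^{ick\log k}}{k^{1/2}(\log k)^\beta}\,g_n^k$ of Zygmund's uniformly convergent series (\ref{functionLp<infty}), which stay uniformly bounded while their transforms have large $\ell^q$ norm for every $q<2$.

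There is also a smaller hole at the boundary. Your Riesz--Thorin patch covers only the segment $\{1/p=0,\ 1/2<1/q\le 1\}$, because Theorem~\ref{RiesT} is stated on $[0,\infty)\times[0,1]$ and an endpoint $(0,1/q)$ with $1/q>1$ lies outside that domain; yet such points do belong to $R_3$. The repair is one line: since $\hat{\alpha}$ is counting measure with unit atoms, $\|\widehat{f}\|_{q'}\ge\|\widehat{f}\|_{q}$ whenever $q'\le q$, so $C_{\infty,q'}\ge C_{\infty,1}=\infty$ for all $0<q'<1$. Note that the paper never meets either problem: in all three of its cases the test functions are uniformly bounded in $L^\infty$ (in Case 3 they are unimodular, $|f_n|\equiv 1$, built from the coset structure of Lemma~\ref{H_n^perp}), so a single sequence witnesses $C_{p,q}=\infty$ simultaneously for every $p\le\infty$ and every $0<q<2$, which is exactly the stronger claim asserted after Proposition~\ref{PropR3XCom}. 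If you want to keep your character-sum scheme, you must replace Case 2 by an argument that, like the paper's, works with powers of one character rather than with an independent family.
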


We will actually prove a stronger conclusion: $C_{p,q}= \infty$ for all $0<q<2$ and $p\ge 1$. We still consider the three cases provided in the proof of Proposition \ref{PropR2XCom}. 

\begin{proof}[Proof of Proposition \ref{PropR3XCom}, Case 1]
Assume that $g$ is the element in $\hat{X}$ with order infinity. Set $G:=\{g^n,n\in \mathbb{Z}\}$ to be the subgroup in $\hat{X}$ generated by $g$. Define for every $x\in X$,
$$
g(x):=\exp\{ib(x)\}
$$
where $b(x):X\rightarrow[0,2\pi]$ is a continuous function. Therefore $g^n(x)=\exp\{inb(x)\}$. Then by \cite{Zyg59:book}, Volume 1, p. 199, (4-9) Theorem, we define, for $\beta>1$ and $c$ positive, the function
\be\label{functionLp<infty}
h(y):=\sum_{n=2}^\infty\frac{e^{icn\log n}}{n^{1/2}(\log n)^\beta}e^{iny}
\ee
uniformly converges for $y\in [0,2\pi]$, whereas its Fourier coefficients are not in $l_q$ for any $q<2$. Thus $f(x)$ defined by 
$$
f(x):=h(b(x))=\sum_{n=2}^\infty\frac{e^{icn\log n}}{n^{1/2}(\log n)^\beta}e^{inb(x)}=\sum_{n=2}^\infty\frac{e^{icn\log n}}{n^{1/2}(\log n)^\beta}g^n(x)
$$
is well defined and is continuous and bounded on $X$. Therefore $f\in L_p$ for all $p\ge 1$. On the other hand $\{\hat{f}(g^n)\}$ is not in $l_q$ for any $q<2$, which ends the proof of Case 1. 
\end{proof}

\begin{proof}[Proof of Proposition \ref{PropR3XCom}, Case 2]
Suppose that the orders of the elements of $\hat{X}$ are not bounded. So there is a sequence of elements $g_n$ in $\hat{X}$ such that the order of $g_n$ is $m_n\nearrow\infty$. Similarly we define $g_n(x):=\exp\{ib_n(x)\}$ for some continuous function $b_n:X\rightarrow[0,2\pi]$. And define 
$$
f_{n}(x):=\sum_{k=2}^{m_n-1}\frac{e^{ick\log k}}{k^{1/2}(\log k)^\beta}e^{ikb_n(x)}=\sum_{k=2}^{m_n-1}\frac{e^{ick\log k}}{k^{1/2}(\log k)^\beta}g_n^k(x)
$$ 
Thus $f_n(x)$ is uniformly bounded for any $n$ and $x\in X$ by the fact that $h(x)$ defined in (\ref{functionLp<infty}) converges uniformly. Thus $\|f_n\|_p$ is uniformly bounded for any $n$, and $\|\widehat{f_n}\|_q\rightarrow\infty$ as $n \rightarrow\infty$ for any $q<2$. In other words, $C_{p,q}=\infty$ for this case. 
\end{proof}

\begin{proof}[Proof of Proposition \ref{PropR3XCom}, Case 3]
Suppose that the orders of the elements of $\hat{X}$ are uniformly bounded. Then we define $r$, $\{g_k\}_{k=1}^\infty$, $\{H_n\}$ and $\{M_n\}$ the same as in the proof of Proposition \ref{PropR2XCom}, Case 3. Apply Lemma \ref{LemNumbofCosetsofHn^perp}, we can find $r^n$ cosets in $\hat{X}/H_n^{\perp}$, denote $\{\gamma_k\}_{k=1}^{r^n}$ a set of the representants of these $r^n$ cosets with $k=1,2,\cdots,r^n$. Define 
$$
f_n(x):=\sum_{k=1}^{r^n}\gamma_k(x)\mathbb{I}_{x_k+H_n}(x)
$$
where $x_k+H_n$ are the cosets of $X/H_n$. Then it is easy to see that $|f_n|\equiv 1$, therefore $\|f_n\|_p=1$. On the other hand, for any $\gamma\in \gamma_{k_0}H_n^{\perp}$ with some $k_0$, 
\begin{eqnarray}\label{FTofBiunimodularfn}
\widehat{f_n}(\gamma)=\sum_{k=1}^{r^n}\int_X\gamma_k(x)\mathbb{I}_{x_k+H_n}(x)\gamma(-x)\mathbb{P}(dx)
\end{eqnarray}
For $k_0$, we have
\begin{eqnarray*}
&&\int_X\gamma_{k_0}(x)\mathbb{I}_{x_{k_0}+H_n}(x)\gamma(-x)\mathbb{P}(dx)\\
&=&\int_{x_{k_0}+H_n}\big(\gamma_{k_0}\gamma^{-1}\big)(x)\mathbb{P}(dx)\\
&=&\big(\gamma_{k_0}\gamma^{-1}\big)(x_{k_0})\int_{H_n}\big(\gamma_{k_0}\gamma^{-1}\big)(x)\mathbb{P}(dx)\\
&=&\big(\gamma_{k_0}\gamma^{-1}\big)(x_{k_0})\frac{1}{r^n}
\end{eqnarray*}
where the last equality is by the fact that $\gamma\in \gamma_{k_0}H_n^{\perp}\Rightarrow\gamma_{k_0}\gamma^{-1}\in H_n^{\perp}$. Further if $k\neq k_0$, we have 
\begin{eqnarray*}
&&\int_X\gamma_{k}(x)\mathbb{I}_{x_{k}+H_n}(x)\gamma(-x)\mathbb{P}(dx)\\
&=&\int_{x_{k}+H_n}\big(\gamma_{k}\gamma^{-1}\big)(x)\mathbb{P}(dx)\\
&=&\big(\gamma_{k}\gamma^{-1}\big)(x_{k})\int_{H_n}\big(\gamma_{k}\gamma^{-1}\big)(x)\mathbb{P}(dx)=0
\end{eqnarray*}
where the last equality is by the fact that $\gamma_k\gamma^{-1}$ is non-trivial on $H_n$ and therefore applying Lemma \ref{DualRestrictionInt}. Thus (\ref{FTofBiunimodularfn}) equals
\begin{eqnarray}\label{FTofBiunimodularfn2}
\widehat{f_n}(\gamma)=\begin{cases} (\gamma_{k}\gamma^{-1})(x_k)\frac{1}{r^n} & \mbox{if}~\gamma\in \gamma_kH_n^{\perp}~\mbox{for~some~}k\\
0 & \mbox{otherwise}
\end{cases}
\end{eqnarray}
Denote $Q_n:=\bigsqcup_{k=1}^{r^n} \gamma_k H_n^{\perp}$, therefore $|Q_n|=r^{2n}$ by Lemma \ref{LemCardnltOfMn} and Lemma \ref{H_n^perp}. Then by (\ref{FTofBiunimodularfn2})
\begin{eqnarray}\label{|FTofBiunimodularfn|}
|\widehat{f_n}(\gamma)|=\begin{cases} \frac{1}{r^n} & \mbox{if}~\gamma\in Q_n\\
0 & \mbox{otherwise}
\end{cases}
\end{eqnarray}
which yields
\begin{eqnarray*}
\|\widehat{f_n}\|_q &\ge & \Big(\sum_{\gamma\in Q_n}\frac{1}{r^{nq}}\Big)^{1/q} =r^{\frac{(2-q)n}{q}}\rightarrow\infty
\end{eqnarray*}
as $n\rightarrow\infty$, which ends the proof of Case 3 and provides Proposition \ref{PropR3XCom}. Proposition \ref{PropR1XCom}, \ref{PropR2XCom} and \ref{PropR3XCom} together provide Theorem 3.1.
\end{proof}

\begin{rmk}\label{RmkIndpPrim}
We have a similar conclusion as Lemma \ref{IndepEqOrd}, even though we will not use this conclusion, it is still an interesting conclusion and its proof is stronger than the proof of Lemma \ref{IndepEqOrd}. Suppose $X$ is a compact LCA group associated with a probability Haar measure $\mathbb{P}$ and $\hat{X}$ is the discrete dual group, and suppose that we can take a sequence $\{g_k\}_{k=1}^\infty$ such that their orders are distinct primes, then $g_k$'s are mutually independent.
\end{rmk}

\begin{proof} [Proof of Remark \ref{RmkIndpPrim}]
We assume that each $g_k$ has prime order $r_k$, and $r_k$'s are distinct. We firstly prove that the events $g_k^{-1}(1)$ are mutually independent. It suffices to prove that 
\begin{eqnarray}\label{IndepDifOrder}
\mathbb{P}\Big(\bigcap_{k=1}^n g_k^{-1}(\{1\})\Big)= \frac{1}{\prod_{k=1}^nr_k}
\end{eqnarray}
We will use induction, suppose that (\ref{IndepDifOrder}) holds for $n$, then for $n+1$, define $H_n$ the subgroup in $X$ such that
\begin{eqnarray}
H_n:=\bigcap_{k=1}^n g_k^{-1}(\{1\})
\end{eqnarray}
Therefore $H_n$ is a compact subgroup of $X$. We treat define $\tilde{g}_{n+1}:=g_{n+1}\big|_{H_n}$, i.e. the restriction of $g_{n+1}$ on $H_n$. Therefore we have that $\tilde{g}_{n+1}^{r_{n+1}}$ is trivial, therefore the order of $\tilde{g}_{n+1}$ divides $r_{n+1}$, hence the order of $\tilde{g}_{n+1}$ is either 1 or $r_{n+1}$ by the fact that $r_{n+1}$ is prime. If the order of $\tilde{g}_{n+1}$ is 1, then $H_n\subset g_{n+1}^{-1}(\{1\})$. So we have that 
\begin{eqnarray}\label{H_nSubset}
H_n\subset \left(g_1g_2\cdots g_ng_{n+1}^{-1}\right)^{-1}(\{1\})
\end{eqnarray}
where the $g_{n+1}^{-1}$ is the inverse of $g_{n+1}$. On the other hand, we have the order of $g_1g_2\cdots g_ng_{n+1}^{-1}$ is $\prod_{k=1}^{n+1}r_k$, therefore by Lemma \ref{Distribofg}, 
\begin{eqnarray}\label{ProbabilityOfrhs}
\mathbb{P} \left\{\big(g_1g_2\cdots g_ng_{n+1}^{-1}\big)^{-1}(\{1\})\right\}=\frac{1}{\prod_{k=1}^{n+1}r_k}
\end{eqnarray}
However by induction hypothesis, 
\begin{eqnarray}\label{ProbabilityOflhs}
\mathbb{P}(H_n)=\frac{1}{\prod_{k=1}^{n}r_k}
\end{eqnarray}
So by (\ref{H_nSubset}), (\ref{ProbabilityOfrhs}) and (\ref{ProbabilityOflhs}) we have 
$$
\frac{1}{\prod_{k=1}^{n}r_k}\le \frac{1}{\prod_{k=1}^{n+1}r_k}
$$
which leads to a contradiction. So we have that the order of $\tilde{g}_{n+1}$ is $r_{n+1}$. Next, we need to verify that the probability measure on $H_n$ definde by the conditional probability $\mathbb{P}(\cdot|H_n)$ is still a Haar measure in order to use Lemma \ref{Distribofg}. To see this, it suffices to verify that $\mathbb{P}(\cdot|H_n)$ is translation invariant. For any measurable set $M\subset H_n$ and any element $\hat{a}\in H_n$, we have
\begin{eqnarray*}
\mathbb{P}(\hat{a}M|H_n)=\frac{\mathbb{P}(\hat{a}M\cap H_n)}{\mathbb{P}(H_n)}=\frac{\mathbb{P}(\hat{a}(M\cap H_n))}{\mathbb{P}(H_n)}=\frac{\mathbb{P}(M\cap H_n)}{\mathbb{P}(H_n)}=\mathbb{P}(M|H_n)
\end{eqnarray*}
where the second equality is by the fact that $\hat{a}\in H_n\Rightarrow \hat{a}H_n=H_n$. The third equality is by the fact that $\mathbb{P}$ is a Haar measure. Therefore by applying Lemma \ref{Distribofg} again on $H_n$, $\mathbb{P}(\cdot|H_n)$ and $\tilde{g}_{n+1}$, we have that $\mathbb{P}\left(\tilde{g}_{n+1}^{-1}(\{1\})|H_n\right)$ is $1/r_{n+1}$. So we have that
\begin{eqnarray}
\mathbb{P}(H_{n+1})=\mathbb{P}(H_n)\mathbb{P}\big(\tilde{g}_{n+1}^{-1}(\{1\})\big|H_n\big)=\frac{1}{\prod_{k=1}^{n+1}r_k}
\end{eqnarray}
which provides conclusion (\ref{IndepDifOrder}). Next, to prove that $g_k$'s are mutually independent, we denote $J_k$ the range of $g_k$ on $\mathbb{T}$. So it suffices to prove that for every $(a_1,\cdots,a_n)\in \prod_{k=1}^{n+1}J_k$, the probability of the random vector
\begin{eqnarray}\label{IndpDiffOrderRange}
\mathbb{P}\big((g_1,g_2,\cdots,g_n)=(a_1,a_2,\cdots,a_n)\big)=\frac{1}{\prod_{k=1}^{n+1}r_k}
\end{eqnarray}
Note that if $(a_1,\cdots,a_n)$ is in the image of the random vector $(g_1,g_2,\cdots,g_n)$, then (\ref{IndpDiffOrderRange}) holds by the fact that $\mathbb{P}$ is a Haar measure. On the other hand, if the image  of $(g_1,g_2,\cdots,g_n)$ is a proper subset of $\prod_{k=1}^{n+1}J_k$, then the probability
$$
\mathbb{P}(X)=\mathbb{P}\big((g_1,g_2,\cdots,g_n)\in\prod_{k=1}^{n+1}J_k\big)<1
$$
which yields a contradiction and ends the proof.
\end{proof}

\section{The discrete case}
\label{sec:discrete}

\begin{thm}\label{ThmXDis}
If $X$ is a discrete non-finite LCA group, we consider the three regions as in Figure 2:
\begin{eqnarray}
&&R_1':=\Big\{\Big(\frac{1}{p},\frac{1}{q}\Big)\in [0,\infty)^2:\frac{1}{p}+\frac{1}{q}< 1, \frac{1}{q}< \frac{1}{2}\Big\}\\
&&R_2':=\Big\{\Big(\frac{1}{p},\frac{1}{q}\Big)\in [0,\infty)^2:\frac{1}{p}+\frac{1}{q}\ge 1, \frac{1}{p}\ge \frac{1}{2}\Big\}\\
&&R_3':=\Big\{\Big(\frac{1}{p},\frac{1}{q}\Big)\in [0,1]^2:\frac{1}{p}< \frac{1}{2}, \frac{1}{q}\ge \frac{1}{2}\Big\}
\end{eqnarray} 
Then the norm of the Fourier operator from $L^p(X)$ to $L^q(\hat{X})$ satisfies
\begin{eqnarray*}
C_{p,q}=\begin{cases}
\infty & \mbox{if}~\Big(\frac{1}{p},\frac{1}{q}\Big)\in R_1'\cup R_3'\\
\hat{\alpha}(\hat{X})^{1/p+1/q-1} & \mbox{if}~\Big(\frac{1}{p},\frac{1}{q}\Big)\in R_2'
\end{cases}
\end{eqnarray*}
where $\hat{\alpha}$ is the Haar measure of the dual group of $X$ normalized by Theorem \ref{InvFT}. 
\end{thm}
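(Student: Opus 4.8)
The plan is to prove Theorem~\ref{ThmXDis} by mirroring the three-region structure of the compact case, now with the roles of $X$ and $\hat{X}$ interchanged: since $X$ is discrete, the compact group is $\hat{X}$, and the distribution and independence facts already established for a compact group and its discrete dual (Lemma~\ref{Distribofg}, Lemma~\ref{IndepEqOrd}, Remark~\ref{RmkIndpPrim}) apply verbatim to $\hat{X}$ with its dual $\hat{\hat{X}}\cong X$. A useful conceptual guide is the bilinear duality $\langle\widehat{f},g\rangle=\langle f,\widehat{g}\rangle$, under which the Fourier operator on $X$ is adjoint to the Fourier operator on $\hat{X}$; this predicts the correspondence $R_2'\leftrightarrow R_1$, $R_1'\leftrightarrow R_2$, $R_3'\leftrightarrow R_3$ via $(1/p,1/q)\mapsto(1/q',1/p')$ and so matches Theorem~\ref{ThmXCom}. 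Because the adjoint identity $\|T\|=\|T^\ast\|$ is only available in the Banach range and does not see the domain restriction to $L^2(X)\cap L^p(X)$, I would argue each region directly, writing $v:=\hat{\alpha}(\hat{X})$ throughout and recalling from Proposition~\ref{HarMeas} that $\alpha(x)=v^{-1}$ for every $x\in X$. All test functions below are finitely supported, hence automatically in $L^2(X)\cap L^p(X)$.

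For the finite region $R_2'$ I would show $C_{p,q}=v^{1/p+1/q-1}$. The lower bound uses the time-basis delta $f=\mathbb{I}_{x_0}$, the discrete analogue of the frequency-basis extremizer: here $\|f\|_p=v^{-1/p}$ and $|\widehat{f}|\equiv v^{-1}$, so $\|\widehat{f}\|_q=v^{1/q-1}$ and the ratio is exactly $v^{1/p+1/q-1}$. For the matching upper bound I would invoke Riesz--Thorin (Theorem~\ref{RiesT}, Corollary~\ref{corRT}), verifying the affine bound $\log K\le(1/p+1/q-1)\log v$ at the vertex $(1/2,1/2)$ (Parseval, Proposition~\ref{Unitary}), along the ray $1/p=1/2$ (Cauchy--Schwarz on the finite space $\hat{X}$ gives $\|\widehat{f}\|_q\le v^{1/q-1/2}\|f\|_2$ for $q\le2$), and along the ray $1/q=0$ (the estimate $\|\widehat f\|_\infty\le\|f\|_1\le v^{1/p-1}\|f\|_p$, i.e. $\ell^p\hookrightarrow\ell^1$ for $p\le1$). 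Since $R_2'$ is precisely the convex hull of these two rays, convexity fills in the interior for $q\ge1$; the remaining sliver $q<1$ follows from the nesting $\|\widehat f\|_q\le v^{1/q-1}\|\widehat f\|_1$ on the finite-measure space $\hat{X}$.

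For the infinite region $R_1'$ I would prove the stronger statement that $C_{p,q}=\infty$ whenever $1/p+1/q<1$. When $X$ contains finite subgroups $F_n$ of unbounded order, take $f_n=\mathbb{I}_{F_n}$; then $\widehat{f_n}=v^{-1}|F_n|\,\mathbb{I}_{F_n^{\perp}}$ with $\hat{\alpha}(F_n^{\perp})=v/|F_n|$, so the ratio equals $v^{1/p+1/q-1}|F_n|^{\,1-1/p-1/q}\to\infty$. When $X$ has an element of infinite order (so $\hat{X}$ surjects onto $\mathbb{T}$) no such subgroups exist, and I would instead use finitely supported ``intervals'' whose transforms are Dirichlet-type kernels with $\|\cdot\|_{L^q(\hat{X})}$ growing like $|F_n|^{1-1/q}$; the case split on the algebraic structure of $X$ parallels Cases~1--3 of Proposition~\ref{PropR2XCom}.

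The region $R_3'$ is the main obstacle, and here I would prove $C_{p,q}=\infty$ for all $p>2$. Choose elements $x_1,x_2,\dots\in X$ which, regarded as characters on the compact group $\hat{X}$, are mutually independent; this is exactly what Lemma~\ref{IndepEqOrd} and Remark~\ref{RmkIndpPrim} supply after interchanging $X$ and $\hat{X}$, the relevant construction again depending on whether $X$ has an infinite-order element, unbounded torsion, or bounded torsion (Lemma~\ref{Distribofg} giving the distribution of each character). Put $f_N=\sum_{k=1}^{N}\mathbb{I}_{x_k}$, so $\widehat{f_N}(\gamma)=v^{-1}\sum_{k=1}^{N}\gamma(-x_k)$ is a multiple of a sum of independent, mean-zero, bounded random variables on $(\hat{X},\hat{\alpha})$, while $\|f_N\|_p=v^{-1/p}N^{1/p}$. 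The key point is to bound $\|\widehat{f_N}\|_q$ \emph{from below} for $q<2$: by the central limit theorems at our disposal (Theorem~\ref{Thm:DependCLT} and Theorem~\ref{LyapCLT}), $N^{-1/2}\sum_{k}\gamma(-x_k)$ converges in distribution to a nondegenerate Gaussian, which forces $\|\widehat{f_N}\|_q\ge c\sqrt{N}$ for a constant $c=c(v,q)>0$ and every fixed $q>0$; the ratio then grows like $N^{1/2-1/p}\to\infty$ precisely because $p>2$. The delicate step, and the only one genuinely beyond routine computation, is converting distributional convergence into this $L^q$ lower bound (a Khintchine-type inequality), for which a $\liminf$ obtained from the portmanteau theorem already suffices, together with checking that the independent system $\{x_k\}$ can be constructed in every case of the structure dichotomy for $X$. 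Combining the three regions yields Theorem~\ref{ThmXDis}.
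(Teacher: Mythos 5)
Your handling of $R_2'$ and $R_1'$ is essentially on the right track --- in fact your computation for the torsion cases of $R_1'$ (take $f_n=\mathbb{I}_{F_n}$ for a finite subgroup $F_n$, so that $\widehat{f_n}$ is a multiple of $\mathbb{I}_{F_n^{\perp}}$ with $\hat{\alpha}(F_n^{\perp})=v/|F_n|$) is a clean unification of the paper's Cases 2 and 3, which reach the same estimates through the subgroup/independence lemmas. But your argument for $R_3'$ has a genuine gap, and it sits exactly at the step you flagged as ``checking that the independent system $\{x_k\}$ can be constructed in every case'': it cannot be. Take $X=\mathbb{Z}$, so $\hat{X}=\mathbb{T}$ with normalized Haar measure, and consider the characters $\gamma\mapsto\gamma^{n}$, $\gamma\mapsto\gamma^{m}$ for nonzero integers $n,m$. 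Then $\mathbb{E}\big[(\gamma^{n})^{m}(\gamma^{m})^{-n}\big]=1$ while $\mathbb{E}\big[(\gamma^{n})^{m}\big]\,\mathbb{E}\big[(\gamma^{m})^{-n}\big]=0$, so no two nonzero elements of $\mathbb{Z}$ are independent as random variables on $\mathbb{T}$. The same failure occurs for the Pr\"ufer group $\mathbb{Z}(p^{\infty})$, where any two elements generate nested cyclic subgroups, making one character a deterministic function of the other. Lemma~\ref{IndepEqOrd} produces independent characters only in the bounded-torsion case, and Remark~\ref{RmkIndpPrim} only for elements of distinct prime orders; neither applies when $X$ has an element of infinite order or unbounded $p$-power torsion. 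This is precisely why the paper's proof of Proposition~\ref{PropR3'XDis} is split: its Case 3 (bounded torsion) is your argument (independent characters plus the Lyapunov CLT, Theorem~\ref{LyapCLT}), but Cases 1 and 2 are forced to work with \emph{dependent} sums $\sum_k k^{-1/2}\gamma(g^{-2^k})$ --- lacunary powers of a single character --- and invoke Zygmund's CLT for lacunary trigonometric series (Theorem~\ref{Thm:DependCLT}); Case 2 additionally requires a two-step limit (first $n\to\infty$ for fixed $n_0$, using $g_n\xrightarrow{d}U$, then $n_0\to\infty$) because an element of finite order supports only finitely many lacunary powers. Without a dependent-variable argument of this kind your proof does not cover $X=\mathbb{Z}$, the most basic infinite discrete group.

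There is also a smaller, fixable defect in your $R_2'$ argument: $R_2'\cap\{1/q\le 1\}$ is \emph{not} the convex hull of the two rays $\{1/p=1/2,\ 1/2\le 1/q\le 1\}$ and $\{1/q=0,\ 1/p\ge 1\}$. Any convex combination achieving $1/q=1$ must put all its weight at $(1/2,1)$, so the hull omits the entire segment $\{1/q=1,\ 1/p>1/2\}$. This matters because your ``sliver'' argument for $q<1$ runs through $C_{p,1}$, which lives exactly on the missing segment. The repair uses only tools you already have: from $C_{p,\infty}\le v^{1/p-1}$ ($p\le 1$) and the nesting $\|\hat{f}\|_1\le v\|\hat{f}\|_{\infty}$ deduce $C_{1,1}\le v$, then interpolate along the segment from $(1/2,1)$ (where Cauchy--Schwarz gives $C_{2,1}\le v^{1/2}$) to $(1,1)$, and handle $1/p>1$ at $1/q=1$ by the same nesting. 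The paper avoids the issue by verifying the bound directly at the five vertices $(1,0),(1/p_0,0),(1/p_0,1),(1/2,1),(1/2,1/2)$ of the truncated region $R_{21,p_0}'$ and letting $p_0$ range over $(0,1)$; you should either do that or add the missing edge explicitly.
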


%\subsection{Proof of Theorem \ref{ThmXDis}}

\begin{prop}\label{PropR2'XDis}
If $(1/p,1/q)\in R_2'$, then $C_{p,q}=\hat{\alpha}(\hat{X})^{1/p+1/q-1}$.
\end{prop}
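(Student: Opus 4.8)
I would prove matching upper and lower bounds for $C_{p,q}$ on $R_2'$, in exact parallel to the compact-case Proposition \ref{PropR1XCom} but with the roles of time and frequency interchanged. The two facts I will lean on repeatedly come from Proposition \ref{HarMeas}: since $X$ is discrete, $\alpha$ is the counting measure scaled so that $\alpha(x)=\hat\alpha(\hat X)^{-1}$ for every $x\in X$, while $\hat X$ is compact with finite total mass $\hat\alpha(\hat X)$. Tracking this single constant correctly through every estimate is what produces the exponent $1/p+1/q-1$.

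\textbf{Lower bound.} The extremal function dual to the frequency character used in the compact case is the delta function $f=\mathbb{I}_e$ at the identity $e$. A direct computation gives $\|f\|_p=\alpha(e)^{1/p}=\hat\alpha(\hat X)^{-1/p}$, and since $\hat f(\gamma)=\alpha(e)\gamma(-e)=\hat\alpha(\hat X)^{-1}$ is constant on $\hat X$, one finds $\|\hat f\|_q=\hat\alpha(\hat X)^{1/q-1}$. Thus $\|\hat f\|_q/\|f\|_p=\hat\alpha(\hat X)^{1/p+1/q-1}$, giving $C_{p,q}\ge \hat\alpha(\hat X)^{1/p+1/q-1}$ for every $(p,q)$ with no restriction. (Note $\mathbb{I}_e$ lies in the admissible domain $L^2(X)\cap L^p(X)$ automatically.)

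\textbf{Upper bound.} I would split $R_2'$ into two pieces. On the quadrant $R_{(a)}:=\{1/p\ge 1/2,\ 1/q\ge 1/2\}$ (that is $p\le 2$ and $q\le 2$) the bound is obtained directly, with no interpolation: combining the discrete $\ell^p$-nesting $\|f\|_2\le \hat\alpha(\hat X)^{1/p-1/2}\|f\|_p$, Parseval $\|\hat f\|_2=\|f\|_2$ from Proposition \ref{Unitary}, and the finite-measure nesting $\|\hat f\|_q\le \hat\alpha(\hat X)^{1/q-1/2}\|\hat f\|_2$ on the compact group $\hat X$, one chains them to $\|\hat f\|_q\le \hat\alpha(\hat X)^{1/p+1/q-1}\|f\|_p$. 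This chain is valid for every $0<q\le 2$, including $q<1$. On the complementary piece $R_{(b)}:=\{1/p\ge 1/2,\ 0\le 1/q<1/2,\ 1/p+1/q\ge 1\}$, where $q>2$, I would invoke Corollary \ref{corRT}: verify $\log C\le(1/p+1/q-1)\log\hat\alpha(\hat X)$ at $(1/2,1/2)$ (using $C_{2,2}=1$) and along the ray $\{(s,0):s\ge 1\}$ (using $\|\hat f\|_\infty\le\|f\|_1\le \hat\alpha(\hat X)^{s-1}\|f\|_{1/s}$, again by $\ell^p$-nesting), and then observe that each target point of $R_{(b)}$ lies in the convex hull of $(1/2,1/2)$ together with a single $(s,0)$, namely $s=(1/p-1/q)/(1-2/q)$.

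\textbf{Main obstacle.} The chief subtlety is that $R_2'$ is unbounded in both coordinate directions, so no single finite convex hull covers it, and moreover Corollary \ref{corRT} is only available on $1/q\le 1$. The two-region split is engineered precisely to finesse both issues: the part needing genuine interpolation, $R_{(b)}$, has $1/q<1/2\le 1$, so the corollary applies there, and for every such point the index $s=(1/p-1/q)/(1-2/q)$ satisfies $s\ge 1$ \emph{exactly} when $1/p+1/q\ge 1$, which is the defining inequality of $R_{(b)}$; meanwhile the part lying outside the corollary's domain, $R_{(a)}$ (where $q$ may be less than $1$), is dispatched by the self-contained nesting estimate that needs no convexity at all. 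Once both inequalities are in hand they coincide, yielding $C_{p,q}=\hat\alpha(\hat X)^{1/p+1/q-1}$ on all of $R_2'$.
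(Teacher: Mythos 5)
Your proposal is correct; the lower bound is the same as the paper's (a delta function/time basis, whose ratio $\|\hat f\|_q/\|f\|_p$ equals $\hat\alpha(\hat X)^{1/p+1/q-1}$ exactly), but your upper bound follows a genuinely different decomposition. The paper cuts $R_2'$ at $1/q=1$: on $R_{21}'=R_2'\cap\{1/q\le1\}$ it applies Corollary \ref{corRT} to the pentagon $\mathrm{hull}\big((1,0),(1/p_0,0),(1/p_0,1),(1/2,1),(1/2,1/2)\big)$ for arbitrary $p_0<1$, so that these pentagons exhaust the strip (which is unbounded in the $1/p$ direction), and on $R_{22}'=R_2'\cap\{1/q>1\}$ (where $q<1$ and Riesz--Thorin is unavailable) it reduces to the $q=1$ case via $\|\hat f\|_q\le \hat\alpha(\hat X)^{1/q-1}\|\hat f\|_1\le\hat\alpha(\hat X)^{1/q-1}C_{p,1}\|f\|_p$. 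You cut instead at $1/q=1/2$: for $q\le 2$ you bypass interpolation entirely with the chain $\|\hat f\|_q\le\hat\alpha(\hat X)^{1/q-1/2}\|\hat f\|_2=\hat\alpha(\hat X)^{1/q-1/2}\|f\|_2\le\hat\alpha(\hat X)^{1/p+1/q-1}\|f\|_p$, which is valid uniformly for $0<p\le 2$, $0<q\le 2$ and therefore absorbs both the unboundedness in $1/p$ and the quasi-norm range $q<1$ in one stroke; for $q>2$ you interpolate along a single segment from $(1/2,1/2)$ to $(s,0)$, and your observation that $s=(1/p-1/q)/(1-2/q)\ge 1$ precisely when $1/p+1/q\ge 1$ is the correct and crucial accounting. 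The endpoint computations are the same in substance in both proofs (your bound $\|f\|_1\le\hat\alpha(\hat X)^{s-1}\|f\|_{1/s}$ is the paper's estimate $C_{p_0,\infty}\le\hat\alpha(\hat X)^{1/p_0-1}$ in disguise), and both arguments lean equally on the extended Riesz--Thorin statement of Theorem \ref{RiesT} being valid for $1/p\in[0,\infty)$, i.e.\ for $p<1$. What your route buys is economy: interpolation is invoked only on two-point sets, only for $q>2$, and no limiting family of hulls is needed; what the paper's route buys is uniformity, treating all of $\{1/q\le 1\}$ by one interpolation scheme that runs in exact parallel with its compact-case proof of Proposition \ref{PropR1XCom}.
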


\begin{proof} [Proof of Proposition \ref{PropR2'XDis}]
We divide $R_2'$ into two parts: $R_{21}':= R_2'\cap \{1/q\le 1\}$ and $R_{22}':=R_2'\setminus R_{21}'$ as in Figure 3.
\begin{figure}[H]
	\centering
	% You can scale your picture here
	\begin{tikzpicture}[scale=1.5,dot/.style={draw,circle,inner sep=2pt,fill},lab/.style={outer sep=2pt,font=\large}]
		\draw [thick,->](0,0)--(0,5);
		\draw [thick,->](0,0)--(5,0);
		\draw (2,4)--(5,4);
		%\draw (4,0)--(4,4);
		\draw (2,2)--(4,0);
		\draw (2,2)--(2,5);
		\draw (2,2)--(0,2);
		\node[dot] at (0,0) {} node[lab] at (0,0) [below left] {$0$};
		\node[dot] at (2,0) {} node[lab] at (2,0) [below] {$\frac{1}{2}$};
		\node[dot] at (4,0) {} node[lab] at (4,0) [below] {$1$} node[lab] at (5,0) [above right] {$\frac{1}{p}$};
		\node[dot] at (0,2) {} node[lab] at (0,2) [left] {$\frac{1}{2}$};
		\node[dot] at (0,4) {} node[lab] at (0,4) [left] {$1$} node[lab] at (0,5) [above right] {$\frac{1}{q}$};
		\node[dot] at (2,2) {};
		%\node[dot] at (4,4) {};
		\node[dot] at (2,4) {};
		\node[lab] at (1,3.5) {$R_3'$};
		
		%\node[lab] at (3,2.5) {$R_2'$};
		\node[lab] at (4,2.1) {$R_{21}'$};
		
		\node[lab] at (1.6,1) {$R_1'$};
		
		\node[lab] at (3.5,4.5) {$R_{22}'$};
		
	\end{tikzpicture}
	\caption{$X$ discrete LCA group}
	\label{figure:2}
\end{figure}
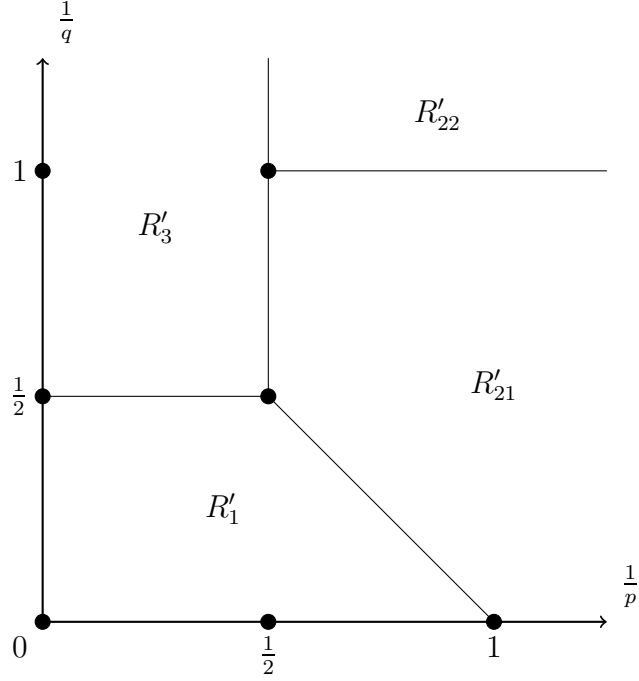
We will firstly prove that $C_{p,q}\le \hat{\alpha}(\hat{X})^{1/p+1/q-1}$ for all $(1/p,1/q)\in R_2'$ and then prove that this upper bound can be attained. Firstly consider the region $R_{21}'$, which is a subset of $(1/p,1/q)\in [0,\infty)\times[0,1]$ and hence we can use Riesz-Thorin Theorem and Corollary \ref{corRT}. We firstly prove that $C_{p,q}\le \hat{\alpha}(\hat{X})^{1/p+1/q-1}$ for all $(1/p,1/q)\in R_{21}'$. Note that it suffices to prove that for any $p_0$ with $p_0<1$, we have $C_{p,q}\le \hat{\alpha}(\hat{X})^{1/p+1/q-1}$ for all $(1/p,1/q)\in R_{21,p_0}':=R_{21}'\cap \{1/p\le 1/{p_0}\}$ as in Figure 4.

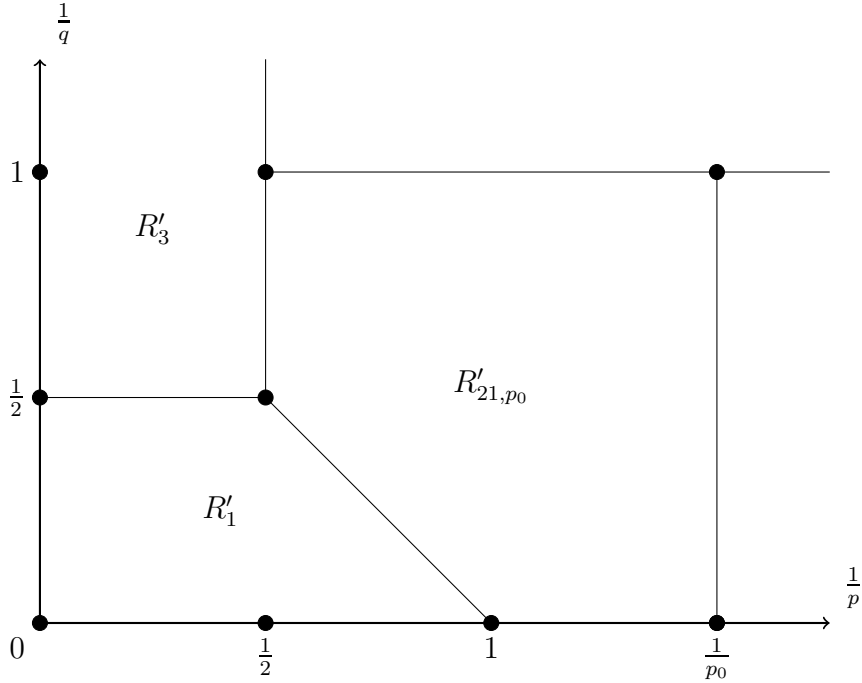
\begin{figure}[H]
	\centering
	% You can scale your picture here
	\begin{tikzpicture}[scale=1.5,dot/.style={draw,circle,inner sep=2pt,fill},lab/.style={outer sep=2pt,font=\large}]
		\draw [thick,->](0,0)--(0,5);
		\draw [thick,->](0,0)--(7,0);
		\draw (2,4)--(7,4);
		%\draw (4,0)--(4,4);
		\draw (2,2)--(4,0);
		\draw (2,2)--(2,5);
		\draw (2,2)--(0,2);
		\draw (6,0)--(6,4);
		\node[dot] at (0,0) {} node[lab] at (0,0) [below left] {$0$};
		\node[dot] at (2,0) {} node[lab] at (2,0) [below] {$\frac{1}{2}$};
		\node[dot] at (4,0) {} node[lab] at (4,0) [below] {$1$} node[lab] at (7,0) [above right] {$\frac{1}{p}$};
		\node[dot] at (6,0) {} node[lab] at (6,0) [below] {$\frac{1}{p_0}$};
		\node[dot] at (0,2) {} node[lab] at (0,2) [left] {$\frac{1}{2}$};
		\node[dot] at (0,4) {} node[lab] at (0,4) [left] {$1$} node[lab] at (0,5) [above right] {$\frac{1}{q}$};
		\node[dot] at (2,2) {};
		%\node[dot] at (4,4) {};
		\node[dot] at (2,4) {};
		\node[dot] at (6,0) {};
		\node[dot] at (6,4) {};
	
		\node[lab] at (1,3.5) {$R_3'$};
		
		%\node[lab] at (3,2.5) {$R_2'$};
		\node[lab] at (4,2.1) {$R_{21,p_0}'$};
		
		\node[lab] at (1.6,1) {$R_1'$};
		
		%\node[lab] at (4,5.5) {$R_{22}'$};
		
	\end{tikzpicture}
	\caption{$X$ discrete LCA group}
	\label{figure:2}
\end{figure}

Since the region $R_{21,p_0}'$ is convex and 
$$
R_2'=\mbox{hull}\Big((1,0),(1/{p_0},0),(1/{p_0},1),(1/2,1),(1/2,1/2)\Big)
$$
and $\log \hat{\alpha}(\hat{X})^{1/p+1/q-1}$ is affine for $(1/p,1/q)$, it suffices to check that $C_{p,q}\le \hat{\alpha}(\hat{X})^{1/p+1/q-1}$ holds at $(1,0),(1/{p_0},0),(1/{p_0},1),(1/2,1),(1/2,1/2)$ by Corollary \ref{corRT}.\\

At the point $(1,0)$, we have
\begin{eqnarray*}
\|\hat{f}\|_\infty &\le & \sup_{\gamma\in\hat{X}}|\hat{f}(\gamma)|=\sup_{\gamma\in\hat{X}}|\int_Xf(x)\gamma(-x)\alpha(dx)|
\le \int_X|f(x)|\alpha(dx)\\
&=&\|f\|_1
\end{eqnarray*}
so $C_{1,\infty}\le 1$. \\

At the point $(1/{p_0},0)$, without loss of generality, we assume that $\|f\|_{p_0}=\hat{\alpha}(\hat{X})^{-1/{p_0}}$, therefore $|f(x)|\le 1$ for all $x\in X$. We have, 
\begin{eqnarray*}
\|\hat{f}\|_{\infty}\le \|f\|_1=\frac{1}{\hat{\alpha}(\hat{X})}\sum_{x\in X}|f(x)|\le \frac{1}{\hat{\alpha}(\hat{X})}\sum_{x\in X}|f(x)|^{p_0}=\|f\|_{p_0}^{p_0}=\hat{\alpha}(\hat{X})^{1/{p_0}-1}\|f\|_{p_0}
\end{eqnarray*}
where the inequality is by the fact that $|f(x)|\le 1$ for all $x\in X$ and the fact that $p_0<1$. So $C_{p_0,\infty}\le \hat{\alpha}(\hat{X})^{1/{p_0}-1}$.\\

%At the point $(1,1)$, without loss of generality, we assume that $\|f\|_1=1/\hat{\alpha}(\hat{X})$, therefore $|f(x)|\le 1$ for $\forall x\in X$. We have, by Cauchy-Schwartz Inequality and Proposition \ref{Unitary}
%\begin{eqnarray*}
%\|\hat{f}\|_1&\le& \hat{\alpha}(\hat{X})^{1/2}\|\hat{f}\|_2=\hat{\alpha}(\hat{X})^{1/2}\|f\|_2
%=\hat{\alpha}(\hat{X})^{1/2}\Big(\frac{1}{\hat{\alpha}(\hat{X})}\sum_{x\in X}|f(x)|^2\Big)^{1/2}\\
%&\le & \hat{\alpha}(\hat{X})^{1/2} \Big(\frac{1}{\hat{\alpha}(\hat{X})}\sum_{x\in X}|f(x)|\Big)^{1/2}=\hat{\alpha}(\hat{X})^{1/2}\|f\|_1^{1/2}=\hat{\alpha}(\hat{X})\|f\|_1
%\end{eqnarray*}
%so $C_{1,1}\le \hat{\alpha}(\hat{X})$.\\

At the point $(1/{p_0},1)$, we have
\begin{eqnarray*}
\|\hat{f}\|_1\le \hat{\alpha}(\hat{X})\|\hat{f}\|_\infty\le \hat{\alpha}(\hat{X})C_{p_0,\infty}\|f\|_{p_0}=\hat{\alpha}(\hat{X})^{1/{p_0}}\|f\|_{p_0}
\end{eqnarray*} 
therefore $C_{p_0,1}\le \hat{\alpha}(\hat{X})^{1/{p_0}}$.\\

At the point $(1/2,1)$, we have, by Cauchy-Schwartz Inequality and Proposition \ref{Unitary}
\begin{eqnarray*}
\|\hat{f}\|_1&\le& \hat{\alpha}(\hat{X})^{1/2}\|\hat{f}\|_2=\hat{\alpha}(\hat{X})^{1/2}\|f\|_2
\end{eqnarray*}
So $C_{2,1}\le \hat{\alpha}(\hat{X})^{1/2}$.\\

At the point $(1/2,1/2)$, we have that $C_{2,2}=1$ by Proposition \ref{Unitary}, which provides $C_{p,q}\le \hat{\alpha}(\hat{X})^{1/p+1/q-1}$ for all $(1/p,1/q)\in R_{21}'$.\\

Next, consider the region $R_{22}'$, we have that for any $(1/p,1/q)\in R_{22}'$, by Cauchy-Schwartz Inequality,
\begin{eqnarray}\label{eq:useCp1norm}
\|\hat{f}\|_q\le \hat{\alpha}(\hat{X})^{1/q-1}\|\hat{f}\|_1\le \hat{\alpha}(\hat{X})^{1/q-1}C_{p,1}\|f\|_p
\end{eqnarray}
We have that $(1/p,1)\in R_{21}'$, hence by our conclusion in $R_{21}'$, $C_{p,1}\le \hat{\alpha}(\hat{X})^{1/p}$, therefore by (\ref{eq:useCp1norm}), we have
$$
\|\hat{f}\|_q\le \hat{\alpha}(\hat{X})^{1/p+1/q-1}\|f\|_p
$$
which provides that $C_{p,q}\le \hat{\alpha}(\hat{X})^{1/p+1/q-1}$ for all $(1/p,1/q)\in R_2'$.\\

Further, the upper bound $\hat{\alpha}(\hat{X})^{1/p+1/q-1}$ can be attained by any time basis (i.e. delta functions on $X$). Assume $f(x):=\mathbb{I}_{x_0}(x)$ for some $x_0\in X$, then $\|f\|_p=\hat{\alpha}(\hat{X})^{-1/p}$, and for any $\gamma\in\hat{X}$, 
$$
\hat{f}(\gamma)=\frac{1}{\hat{\alpha}(\hat{X})}\gamma(-x_0)
$$
which yields $\|\hat{f}\|_q=\hat{\alpha}(\hat{X})^{1/q-1}$, which provides $C_{p,q}\ge \hat{\alpha}(\hat{X})^{1/p+1/q-1}$ for all $(p,q)\in R_2'$ and ends the proof of Proposition \ref{PropR2'XDis}.
\end{proof}

\begin{prop}\label{PropR1'XDis}
If $(1/p,1/q)\in R_1'$, then $C_{p,q}=\infty$. 
\end{prop}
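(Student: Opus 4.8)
The plan is to prove the stronger statement that $C_{p,q}=\infty$ whenever $\frac1p+\frac1q<1$; note this already forces $\frac1q<1-\frac1p\le 1$, so $q>1$ throughout, and $R_1'$ is exactly the part of this region with $q>2$. In every case the witness will be a sequence of finitely supported functions $f_N$ on the discrete group $X$ (hence automatically in $L^2(X)\cap L^p(X)$), and the mechanism is that the ratio $\|\widehat{f_N}\|_q/\|f_N\|_p$ scales like $(\text{size of support})^{1-1/p-1/q}$, whose exponent is positive precisely in our region. Mirroring Proposition~\ref{PropR2XCom}, I would split according to the structure of $X$ itself: (Case~1) $X$ has an element of infinite order; (Case~2) $X$ is torsion with unbounded orders; (Case~3) $X$ is torsion with uniformly bounded orders. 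The key computational input in each case is that for a fixed $a\in X$, viewed as a character of the compact group $\hat X$ via the canonical isomorphism $X\cong\widehat{\hat X}$, the random variable $\gamma\mapsto\gamma(a)$ is equidistributed over the relevant roots of unity or over $\mathbb T$; this is Lemma~\ref{Distribofg} applied to the compact group $\hat X$.

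First I would dispatch the two torsion cases, which are cleaner because finite subgroups are available. When $X$ is torsion it contains finite subgroups $G_n\le X$ with $|G_n|\to\infty$: cyclic subgroups of growing order in Case~2, and the elementary abelian subgroups $M_n\cong\mathbb F_r^n$ of order $r^n$ built as in Lemma~\ref{LemCardnltOfMn} (the group-theoretic arguments of Lemmas~\ref{Lemminprim}--\ref{H_n^perp} apply verbatim to the discrete group $X$) in Case~3. Taking $f_n=\mathbb I_{G_n}$, orthogonality of characters on the finite group $G_n$ gives $\widehat{f_n}=\hat\alpha(\hat X)^{-1}|G_n|\,\mathbb I_{G_n^\perp}$, where $G_n^\perp$ is the annihilator of $G_n$ in $\hat X$ (identified as in Lemma~\ref{H_n^perp}). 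Using Proposition~\ref{HarMeas} together with $\hat\alpha(G_n^\perp)=\hat\alpha(\hat X)/|G_n|$, one computes $\|f_n\|_p=\hat\alpha(\hat X)^{-1/p}|G_n|^{1/p}$ and $\|\widehat{f_n}\|_q=\hat\alpha(\hat X)^{1/q-1}|G_n|^{1-1/q}$, so the ratio equals $\hat\alpha(\hat X)^{1/p+1/q-1}\,|G_n|^{\,1-1/p-1/q}\to\infty$.

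The remaining Case~1, with $a\in X$ of infinite order, is the main obstacle: there is no finite subgroup to exploit, so I would instead use the ``arithmetic progression'' $f_N=\mathbb I_{\{a,2a,\dots,Na\}}$. Then $\widehat{f_N}(\gamma)=\hat\alpha(\hat X)^{-1}\sum_{n=1}^N\gamma(a)^{-n}$ depends on $\gamma$ only through $z=\gamma(a)$. Since $a$ has infinite order, Lemma~\ref{Distribofg} (for $\hat X$) says the pushforward of normalized Haar measure on $\hat X$ under $\gamma\mapsto\gamma(a)$ is normalized Lebesgue measure on $\mathbb T$, whence $\|\widehat{f_N}\|_q=\hat\alpha(\hat X)^{1/q-1}\,\big\|\sum_{n=1}^N e^{-in\theta}\big\|_{L^q(\mathbb T)}$. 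Invoking the sharp Dirichlet-kernel asymptotic $\big\|\sum_{n=1}^N e^{-in\theta}\big\|_{L^q(\mathbb T)}\asymp N^{1-1/q}$, valid for all $q>1$ (which holds throughout the region), together with $\|f_N\|_p=\hat\alpha(\hat X)^{-1/p}N^{1/p}$, the ratio is again $\asymp N^{\,1-1/p-1/q}\to\infty$.

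The delicate points are all concentrated in Case~1: the equidistribution/pushforward step (that the restriction map $\hat X\to\mathbb T$ carries Haar to Haar, i.e.\ that $\gamma(a)$ is uniform on $\mathbb T$) and the invocation of the $L^q$ Dirichlet-kernel bound; by contrast the torsion cases reduce to exact character-sum orthogonality and coset counting already in place from Lemmas~\ref{LemCardnltOfMn} and \ref{H_n^perp}. Assembling the three cases establishes $C_{p,q}=\infty$ on all of $\{\frac1p+\frac1q<1\}\supseteq R_1'$, which in particular proves Proposition~\ref{PropR1'XDis}.
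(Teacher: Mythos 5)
Your proposal is correct and follows essentially the same route as the paper: the same three-case split according to element orders in $X$, the same witness functions (indicators of $\{a,2a,\dots,Na\}$ when $X$ has an element $a$ of infinite order, and indicators of finite subgroups---cyclic ones of growing order, or the groups $M_n\cong\mathbb{F}_r^n$ from Lemma~\ref{LemCardnltOfMn}---in the torsion cases), and the same key inputs, namely Lemma~\ref{Rangeofg} and Lemma~\ref{Distribofg} applied to the compact group $\hat{X}$ with $X\cong\hat{\hat{X}}$, together with Lemmas~\ref{Lemminprim}--\ref{H_n^perp} with the roles of $X$ and $\hat{X}$ swapped. The only cosmetic differences are that you compute $\widehat{\mathbb{I}_{G_n}}$ exactly as a multiple of $\mathbb{I}_{G_n^\perp}$ where the paper merely lower-bounds the $L^q$ norm by restricting to the annihilator, and that in Case~1 you cite the Dirichlet-kernel asymptotic $\big\|\sum_{n=1}^N e^{-in\theta}\big\|_{L^q(\mathbb{T})}\asymp N^{1-1/q}$, whose lower-bound half (the only part you need) is exactly the paper's inline argument that $\mathrm{Re}\,\widehat{f_N}\ge N/2$ on an arc-preimage of measure $1/(3N)$.
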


Since $X$ is discrete, as in the proof of Proposition \ref{PropR2XCom}, we have three cases: 
\begin{itemize}
\item Case 1. ${X}$ has an element with order infinity.
\item Case 2. Every element of ${X}$ has a finite order, and the order set is not bounded.
\item Case 3. Every element of ${X}$ has a finite order, and the order set is uniformly bounded.
\end{itemize}

Similarly, we assume that the Haar measure on $\hat{X}$ is a probability measure $\mathbb{P}$.\\

\begin{proof} [Proof of Proposition \ref{PropR1'XDis}, Case 1]
Assume that $g$ is the element in $X$ with order infinity, set $G:=[g]$, define:
$$
U_k:=\{\hat{x}\in\hat{X}:~\hat{x}(g)\in (e^{-\pi i/(3k)},e^{\pi i/(3k)})\}
$$ 
Then similarly $U_k$ is decreasing. We treat $g$ as an element in $\hat{\hat{X}}$, note that $g(\hat{x}):=\hat{x}(g)$ where $g\in X\simeq \hat{\hat{X}}$. Hence Lemma \ref{Rangeofg} and Lemma \ref{Distribofg} are valid by substituting $X$ by $\hat{X}$. Then we define a sequence of functions $f_k$ on $X$ by 
$$
f_k:=\sum_{j=1}^k \mathbb{I}_{g^j}
$$
Thus $\|f_k\|_p=k^{1/p}$. On the other hand, for every $\hat{x}\in\hat{X}$, 
$$
\widehat{f_k}(\hat{x})=\sum_{j=1}^k \hat{x}(g^{-j})=\sum_{j=1}^k (g^j(\hat{x}))^{-1}
$$
Moreover, we have for every $\hat{x}\in U_k$, and $1\le j\le k$, 
\begin{eqnarray}\label{Reg^j>1/2}
Re\left(\left(g^j(\hat{x})\right)^{-1}\right)=Re\left(\hat{x}(g^{-j})\right)=Re\left(\left(\hat{x}(g)\right)^{-j}\right)>1/2
\end{eqnarray}
which yields
\begin{eqnarray*}
\|\widehat{f_k} \|_q &=& \left(\int_{\hat{X}} \Big| \sum_{j=1}^k \left(g^j(\hat{x})\right)^{-1}\Big|^q \hat{\alpha}(d\hat{x})\right)^{1/q}
\ge  \left(\int_{U_k} \Big| \sum_{j=1}^k \left(g^j(\hat{x})\right)^{-1}\Big|^q \hat{\alpha}(d\hat{x})\right)^{1/q}\\
&\ge & \left(\int_{U_k} \Big| \sum_{j=1}^k Re\left((g^j(\hat{x}))^{-1}\right)\Big|^q \hat{\alpha}(d\hat{x})\right)^{1/q}\\
&\ge & \frac{k\mathbb{P}(U_k)^{1/q}}{2}~~(\mbox{by~(\ref{Reg^j>1/2})})\\
&= & \frac{k^{1-1/q}}{2\cdot 3^{1/q}}~~(\mbox{by~Lemma~\ref{Rangeofg}~and~Lemma~\ref{Distribofg}})
\end{eqnarray*}
Therefore 
$$
\frac{\|\widehat{f_k} \|_q}{\|f_k\|_p}\ge \frac{k^{1-1/p-1/q}}{2\cdot 3^{1/q}}\rightarrow\infty
$$
as $k\rightarrow\infty$, which ends the proof of Case 1.
\end{proof}

\begin{proof}[Proof of Proposition \ref{PropR1'XDis}, Case 2]
If the orders of the elements of $X$ are not bounded, then take a sequence $g_n$ in $X$ such that each $g_n$ has order $m_n$ and $m_n\nearrow\infty$. Define
$$
U_n:=\{\hat{x}\in \hat{X}:~\hat{x}(g_n)=1 )\}
$$ 
Hence we have that for every $\hat{x}\in U_n$ and every $1\le j\le m_n$, 
\begin{eqnarray}\label{ValueofgnjonUn}
\big(g_n^j(\hat{x})\big)^{-1}=\big(\hat{x}(g_n)\big)^{-j}=1
\end{eqnarray}
Define 
$$
f_n:=\sum_{j=1}^{m_n}\mathbb{I}_{g_n^j}
$$
Hence $\|f_n\|_p={m_n}^{1/p}$. On the other hand, for every $\hat{x}\in\hat{X}$,
$$
\widehat{f_n}(\hat{x})=\sum_{j=1}^{m_n}\big(g_n^j(\hat{x})\big)^{-1}
$$
Thus, the $L_q$ norm of $\widehat{f_n}$
\begin{eqnarray*}
\|\widehat{f_n}\|_q&=&\left(\int_{\hat{X}}\Big|\sum_{j=1}^{m_n}\left(g_n^j(\hat{x})\right)^{-1}\Big|^q\hat{\alpha}(d\hat{x})\right)^{1/q}
\ge \left(\int_{U_n}\Big|\sum_{j=1}^{m_n}\left(g_n^j(\hat{x})\right)^{-1}\Big|^q\hat{\alpha}(d\hat{x})\right)^{1/q}\\
&= &m_n\hat{\alpha}(U_n)^{1/q}~~(\mbox{by~(\ref{ValueofgnjonUn})})\\
&=&(m_n)^{1-1/q}~~(\mbox{by~Lemma~\ref{Rangeofg}~and~Lemma~\ref{Distribofg}})
\end{eqnarray*}
So we have
$$
\frac{\|\widehat{f_n}\|_q}{\|f_n\|_p}\ge (m_n)^{1-1/p-1/q}\rightarrow\infty
$$
as $n\rightarrow\infty$, which ends the proof of Case 2. 
\end{proof}

\begin{proof} [Proof of Proposition \ref{PropR1'XDis}, Case 3]
Suppose that the order of all elements in ${X}$ are uniformly bounded, we treat $X$ as $\hat{\hat{X}}$, then all lemmas, \ref{Rangeofg} through \ref{DualRestrictionInt} are valid. Define $r$, $\{g_k\}$, $\{M_n\}$, $\{H_n\}$ the same as in the proof of Proposition \ref{PropR2XCom} Case 3 except for substituting $\hat{X}$ by $X$. Define 
$$
f_n:=\mathbb{I}_{M_n}
$$
Then we have $\|f_n\|_p=(r^n)^{1/p}$ by Lemma \ref{LemCardnltOfMn}. On the other hand, for every $\hat{x}\in \hat{X}$, 
$$
\widehat{f_n}(\hat{x})=\sum_{g\in M_n}\hat{x}(-g)
$$
Thus we have
\begin{eqnarray}\label{FTbddorderineq}
\|\widehat{f_n}\|_q =\left(\int_{\hat{x}\in \hat{X}}\Big|\sum_{g\in M_n}\hat{x}(-g)\Big|^q\mathbb{P}(d\hat{x})\right)^{1/q}
\ge \left(\int_{\hat{x}\in H_n}\Big|\sum_{g\in M_n}\hat{x}(-g)\Big|^q\mathbb{P}(d\hat{x})\right)^{1/q}~~~
\end{eqnarray}
Since for every $g\in M_n$, $g(H_n)$ is trivial, thus by Lemma \ref{IndepEqOrd}, Lemma \ref{LemCardnltOfMn} and (\ref{FTbddorderineq}),
$$
\|\widehat{f_n}\|_q \ge \left(\mathbb{P}(H_n)\right)^{1/q}r^n= (r^n)^{1-1/q}
$$
which yields
$$
\frac{\|\widehat{f_n}\|_q}{\|f_n\|_p}\ge (r^n)^{1-1/p-1/q}\rightarrow\infty
$$
as $n\rightarrow\infty$, which ends the proof of Case 3 and provides Proposition \ref{PropR1'XDis}.
\end{proof}

\begin{prop}\label{PropR3'XDis}
If $(1/p,1/q)\in R_3'$, then $C_{p,q}=\infty$.
\end{prop}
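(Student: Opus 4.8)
The plan is to prove the stronger statement that $C_{p,q}=\infty$ for every $p>2$ and every $q\ge 1$; since $R_3'\subset\{1/p<1/2,\ 1\le q\le 2\}$, this covers the proposition. Normalize so that the Haar measure on the compact dual $\hat X$ is the probability measure $\mathbb P$, so that $\alpha$ is counting measure on $X$ and $\|f\|_p=\big(\sum_x|f(x)|^p\big)^{1/p}$; for $p>2$ the relevant domain is $L^2(X)\cap L^p(X)=L^2(X)$, so it suffices to exhibit \emph{finitely supported} functions $f_N$ with $\|\widehat{f_N}\|_q/\|f_N\|_p\to\infty$. The guiding idea is that if $f_N=\sum_{k=1}^N\mathbb I_{g_k}$ for suitable $g_1,\dots,g_N\in X$, then $\|f_N\|_p=N^{1/p}$, while (viewing each $g_k$ as an element of $\widehat{\hat X}$) one has $\widehat{f_N}(\hat x)=\sum_{k=1}^N\overline{g_k(\hat x)}$, a sum of mean-zero, unit-variance random variables on $(\hat X,\mathbb P)$. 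If these summands are (asymptotically) independent, a central limit theorem forces $\widehat{f_N}/\sqrt N$ to be asymptotically complex Gaussian, whence $\|\widehat{f_N}\|_q\ge c_q\sqrt N$ for every $q\ge 1$ and the ratio grows like $N^{1/2-1/p}\to\infty$. As in Proposition \ref{PropR2XCom}, I would split into three cases according to whether $X$ (now in the role of $\widehat{\hat X}$, so that Lemmas \ref{Rangeofg}--\ref{DualRestrictionInt} apply with $\hat X$ in the role of the compact group) has an element of infinite order, has unbounded but finite orders, or has uniformly bounded orders.

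In the bounded-order case (Case 3) I would take the prime $r$ and the i.i.d.\ family $\{g_k\}$ furnished by Lemma \ref{IndepEqOrd}, so that the $\overline{g_k(\hat x)}$ are i.i.d., mean zero, and uniform on the $r$-th roots of unity. In the unbounded-order case (Case 2) I would first extract a subsequence $g_k$ whose orders are distinct primes $r_k\to\infty$ (possible precisely because the orders are unbounded) and invoke Remark \ref{RmkIndpPrim} to conclude that the $\overline{g_k}$ are independent. In either case $f_N=\sum_{k=1}^N\mathbb I_{g_k}$ is finitely supported with $\|f_N\|_p=N^{1/p}$, and the real parts $\mathrm{Re}\,\overline{g_k}$ are independent, bounded, mean zero, with variances $\sigma_k^2$ bounded away from $0$ and $\infty$; the Lyapunov condition (\ref{LyapCdt}) is then immediate (the numerator is $O(N)$ while $s_N^{2+\delta}\asymp N^{1+\delta/2}$), so Theorem \ref{LyapCLT} gives $\mathrm{Re}\,\widehat{f_N}/s_N\xrightarrow{d}\mathcal N(0,1)$.

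In the infinite-order case (Case 1), let $g\in X$ have infinite order; by Lemma \ref{Distribofg} the random variable $\hat x\mapsto g(\hat x)=\hat x(g)$ is uniformly distributed on $\mathbb T$, say $g(\hat x)=e^{i\theta}$ with $\theta$ uniform. Choosing a lacunary sequence $n_1<n_2<\cdots$ with $n_{k+1}/n_k\ge\Lambda>1$ and setting $f_N=\sum_{k=1}^N\mathbb I_{g^{n_k}}$, one gets $\|f_N\|_p=N^{1/p}$ and $\widehat{f_N}=\sum_{k=1}^N e^{-in_k\theta}$, whose real part $\sum_{k=1}^N\cos(n_k\theta)$ is a lacunary cosine series with all $r_k=1$. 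Here $A_k=\sqrt{k/2}\to\infty$ and $r_k/A_k\to 0$, so hypotheses (\ref{eq:ExtraLacunaryCondition}) of Theorem \ref{Thm:DependCLT} hold and $\mathrm{Re}\,\widehat{f_N}/A_N$ is asymptotically standard Gaussian on $(\hat X,\mathbb P)$ (taking $E=\hat X$, of full measure).

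In all three cases the final step converts the distributional CLT into a norm lower bound: since $|\widehat{f_N}|\ge|\mathrm{Re}\,\widehat{f_N}|$ and $w\mapsto|w|^q$ is continuous and nonnegative, weak lower semicontinuity of the $L^q$ functional under convergence in distribution yields $\liminf_N \|\mathrm{Re}\,\widehat{f_N}\|_q/s_N\ge \|\mathcal N(0,1)\|_q>0$ for every $q\ge 1$. Hence $\|\widehat{f_N}\|_q\ge c\sqrt N$ and $\|\widehat{f_N}\|_q/\|f_N\|_p\ge c\,N^{1/2-1/p}\to\infty$, giving $C_{p,q}=\infty$. I expect the main obstacle to be exactly this last passage from weak convergence to a uniform lower bound on $\|\widehat{f_N}\|_q$: one must rely only on the one-sided (lower) semicontinuity, since no upper moment control is available, and one must ensure the constant $c$ is independent of $N$. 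Secondary technical points are guaranteeing genuine independence in Case 2 by the distinct-prime extraction and Remark \ref{RmkIndpPrim}, and checking that Theorem \ref{Thm:DependCLT} applies verbatim with the constant coefficients $r_k\equiv 1$ used in Case 1.
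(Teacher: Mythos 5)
Your Cases 1 and 3, and your final conversion of the CLT into a norm lower bound, are sound and close in spirit to the paper's own argument: the paper weights the indicators by $k^{-1/2}$ so that $\|f_n\|_p$ stays \emph{bounded} for $p>2$ while the CLT scale $h(n)\sim\sqrt{\log n}$ diverges, whereas you use unit weights and win because $s_N\asymp\sqrt{N}\gg N^{1/p}$; and the paper converts the CLT into $\|\widehat{f_n}\|_q\gtrsim h(n)$ via an explicit positive-probability tail bound rather than lower semicontinuity of $w\mapsto\mathbb{E}|w|^q$ under convergence in distribution, but both conversions are valid (and in fact work for all $q>0$, which is how the paper gets the stronger statement). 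The genuine gap is in your Case 2. The claim that unbounded orders let you ``extract a subsequence $g_k$ whose orders are distinct primes $r_k\to\infty$'' is false: unboundedness of the order set does not produce elements of large prime order. Take $X=\mathbb{Z}(p^\infty)$, the Pr\"ufer $p$-group (so $\hat{X}$ is the group of $p$-adic integers): every element has order a power of $p$, the orders are unbounded, yet the only prime occurring as an order is $p$ itself, and there are only finitely many elements of each fixed order $p^k$. So you can neither invoke Remark \ref{RmkIndpPrim} (no distinct primes exist) nor fall back on the construction of Lemma \ref{IndepEqOrd} (no prime has infinitely many elements of that order). Worse, independence genuinely fails in this group: its finite subgroups are nested, so an element of order $p^j$ is a power of any element of order $p^k$ with $k\ge j$, and the corresponding characters on $\hat{X}$ are deterministic functions of one another.

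This is exactly why the paper's Case 2 takes a different mechanism: it fixes the lacunary polynomial $P_{n_0}(y)=\sum_{k=1}^{n_0}k^{-1/2}\cos(2^k y)$, takes a \emph{single} element $g_n$ of order $m_n>2^{n_0}$ and forms $f_{n,n_0}=\sum_{k=1}^{n_0}k^{-1/2}\mathbb{I}_{g_n^{-2^k}}$, uses that $b_n\xrightarrow{d}U$ (uniform) as $m_n\to\infty$ together with continuity of $P_{n_0}$ to transfer the lacunary CLT lower bound for $P_{n_0}(U)$ to $P_{n_0}(b_n)$ for $n\ge n(n_0)$, and then lets $n_0\to\infty$ along this diagonal. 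In other words, in Case 2 one replaces independence by lacunarity inside a single cyclic subgroup of large order, approximating your Case 1 rather than your Case 3. Your Cases 1 and 3 can stand as written (modulo the minor point that Theorem \ref{Thm:DependCLT} should be applied with $E=(0,2\pi)$ and then transferred to $\hat{X}$ via Lemma \ref{Distribofg}, not with ``$E=\hat{X}$''), but until Case 2 is replaced by an argument of the paper's type, the proposal does not cover groups such as the Pr\"ufer groups and is therefore incomplete.
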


We will actually prove a stronger conclusion: $C_{p,q}=\infty$ for all $p>2$ and $q>0$. We still consider the three cases provided in the proof of Proposition \ref{PropR1'XDis}, and assume that the Haar measure on $\hat{X}$ is a probability measure. 

\begin{proof} [Proof of \ref{PropR3'XDis}, Case 1]
Assume that $g$ is the element in $X$ with order infinity. We treat $g$ as an element in $\hat{\hat{X}}$, then Lemma \ref{Rangeofg} and Lemma \ref{Distribofg} are valid by substituting $X$ by $\hat{X}$. Define for every $\hat{x}\in \hat{X}$, 
\begin{eqnarray}\label{repofg}
g(\hat{x}):=\exp\{ib(\hat{x})\}
\end{eqnarray}
where $b(\hat{x}):\hat{X}\rightarrow [0.2\pi]$ is a continuous random variable with uniform distribution by Lemma \ref{Rangeofg} and Lemma \ref{Distribofg}. Define a sequence of functions on $X$:
$$
f_n:=\sum_{k=1}^n\frac{1}{\sqrt{k}} \mathbb{I}_{g^{-2^k}}
$$
Thus $\|f_n\|_p$ are uniformly bounded by the fact that $\sum (1/n)^{p/2}<\infty$ for $p>2$ and $\|\widehat{f_n}\|_2=\|f_n\|_2\rightarrow\infty$. We have, by noting that $g^{2^k}(\hat{x}):=\hat{x}(g^{2^k})$,
\begin{eqnarray*}
\widehat{f_n}(\hat{x})=\sum_{k=1}^n\frac{1}{\sqrt{k}}g^{2^k}(\hat{x})=\sum_{k=1}^n\frac{1}{\sqrt{k}}e^{i2^kb(\hat{x})}
\end{eqnarray*}
Consider the real part:
\begin{eqnarray*}
Re\left(\widehat{f_n}(\hat{x})\right)=\sum_{k=1}^n\frac{1}{\sqrt{k}}\cos\left(2^kb(\hat{x})\right)=:P_n\left(b(\hat{x})\right)
\end{eqnarray*}
where $P_n(y):=\sum_{k=1}^n\frac{1}{\sqrt{k}}\cos\left({2^ky}\right)$ is a lacunary series with $\Lambda=2$ and $P_n$, $A_n$ satisfies condition (\ref{eq:ExtraLacunaryCondition}) in Theorem \ref{Thm:DependCLT}. Thus, apply Theorem \ref{Thm:DependCLT} by setting $E:=(0,2\pi)$, we have that
\begin{eqnarray}\label{eq:LacunaryCLTConv}
\frac{\lambda\left\{y\in (0,2\pi):~P_n(y)/A_n\ge 1\right\}}{2\pi}\rightarrow \frac{1}{2\pi} \int_y^{\infty}e^{-x^2/2}dx>\frac{1}{2\sqrt{2\pi}}e^{-1/2}
\end{eqnarray}
Therefore by Lemma \ref{Rangeofg} and \ref{Distribofg}, we have that the probability measure $\lambda'$ induced by $g$ is $\lambda'=\lambda/2\pi$, therefore we have
\begin{eqnarray*}
\mathbb{P}\left(\hat{x}\in \hat{X}:~b(\hat{x})\in (0,2\pi),~P_n\left(b(\hat{x})\right)\ge A_n\right)&=&\lambda'\left\{y\in (0,2\pi):~P_n(y)\ge A_n\right\}\\
&\rightarrow & \frac{1}{2\pi} \int_y^{\infty}e^{-x^2/2}dx>\frac{1}{2\sqrt{2\pi}}e^{-1/2}
\end{eqnarray*}
This means that there exists $N$ large enough such that for any $n\ge N$, 
\begin{eqnarray}\label{eq:LacunarySeriesCLTIneq}
\mathbb{P}\left(\hat{x}\in \hat{X}:~|\widehat{f_n}(\hat{x})|\ge A_n\right)\ge \mathbb{P}\left(\hat{x}\in \hat{X}:~P_n\left(b(\hat{x})\right)\ge A_n\right)\ge \frac{1}{2\sqrt{2\pi}}e^{-1/2}>0
\end{eqnarray}
Thus by the fact that $A_n\rightarrow\infty$, the inequality (\ref{eq:LacunarySeriesCLTIneq}) means that we always have a subset in $\hat{X}$ with measure at least $\frac{1}{2\sqrt{2\pi}}e^{-1/2}$, such that the value of $|\widehat{f_n}|$ on this set is arbitrarily large as $n\rightarrow\infty$. So we have $\|\widehat{f_n}\|_q\rightarrow\infty$, which ends the proof for Case 1.
\end{proof}

\begin{proof} [Proof of Proposition \ref{PropR3'XDis}, Case 2]
For this case, Lemma \ref{Rangeofg} and Lemma \ref{Distribofg} are valid. Set a sequence $g_n$ in $X$ such that each $g_n$ has order $m_n$ with $m_n\nearrow\infty$. So it is easy to verify that the distribution function of $g_n$ on $[0,2\pi]$ convergence uniformly to $F(x)=x/2\pi$, which means that $g_n\xrightarrow{d} U$ where $U$ is a random variable from some probability space to $[0,2\pi]$ with uniform distribution $\mathbb{P}_U$. We firstly claim that for
\begin{eqnarray}\label{eq:DefofPnTrignmkPoly}
P_n(y)&:=&\sum_{k=1}^n\frac{1}{\sqrt{k}}\cos \left(2^ky\right)\\
A_n&:=&\left(\frac{1}{2}\sum_{k=1}^n\frac{1}{k}\right)^{1/2}
\end{eqnarray}
we have that
\begin{eqnarray}\label{eq:LacunaryCLTUnifDistrib}
\mathbb{P}_U(P_n(U)\ge A_n)\rightarrow\frac{1}{2\pi} \int_y^{\infty}e^{-x^2/2}dx>\frac{1}{2\sqrt{2\pi}}e^{-1/2}
\end{eqnarray}
In fact, it is easy to verify that $P_n$ is a lacunary series with $\Lambda=2$ in Definition \ref{LacunarySeris} and $A_n$'s satisfy condition (\ref{eq:ExtraLacunaryCondition}) in Theorem \ref{Thm:DependCLT}. Thus we have (\ref{eq:LacunaryCLTUnifDistrib}) holds, which means that
\begin{eqnarray}\label{eq:CoclusionLacunaryCLTUnifDistrib}
\mathbb{P}_U(P_{n_0}(U)\ge A_{n_0})\ge \frac{1}{2\sqrt{2\pi}}e^{-1/2}~\mbox{for~all~}n_0~\mbox{large~enough}
\end{eqnarray}
Fix this $n_0$. On the other hand we set $g_n(\hat{x}):=e^{ib_n(\hat{x})}$ with $b_n:\hat{X}\rightarrow[0,2\pi)$ a continuous random variable. Thus $b_n\xrightarrow{d} U$ as $n\rightarrow\infty$. Define, for $n$ large enough such that $m_n>2^{n_0}$, a sequence of functions on $X$:
$$
f_{n,n_0}:=\sum_{k=1}^{n_0}\frac{1}{\sqrt{k}}\mathbb{I}_{g_n^{-2^k}}
$$
thus for $p>2$, $\|f_{n,n_0}\|_p$ is uniformly bounded and
\ben
\widehat{f_{n,n_0}}(\hat{x})=\sum_{k=1}^{n_0}\frac{1}{\sqrt{k}} g_n^{2^k}(\hat{x})=\sum_{k=1}^{n_0}\frac{1}{\sqrt{k}}e^{2^k ib_n(\hat{x})}
\een
Consider the real part of $\widehat{f_{n,n_0}}(\hat{x})$:
\be\label{eq:RealPtFTFunctionSeqFixn0}
Re\left(\widehat{f_{n,n_0}}(\hat{x})\right)=\sum_{k=1}^{n_0}\frac{1}{\sqrt{k}} \cos \left(2^k b_n(\hat{x})\right)=P_{n_0}\left(b_n(\hat{x})\right)
\ee
where $P_{n_0}$ is defined as in (\ref{eq:DefofPnTrignmkPoly}).
Therefore for fixed $n_0$, by the fact that $P_{n_0}$ is continuous, $P_{n_0}\left(b_n(\hat{x})\right)\xrightarrow{d}P_{n_0}(U)$ as $n\rightarrow\infty$. Therefore
\be
\mathbb{P}\left(P_{n_0}\left(b_n(\hat{x})\right)\ge A_{n_0}\right)\rightarrow \mathbb{P}_U(P_{n_0}(U)\ge A_{n_0})\ge \frac{1}{2\sqrt{2\pi}}e^{-1/2}
\ee
as $n\rightarrow\infty$. This means for any $n_0$ large enough, we have an $n(n_0)$ large enough such that for all $n\ge n(n_0)$,
\be
\mathbb{P}\left(P_{n_0}\left(b_n(\hat{x})\right)\ge A_{n_0}\right)\ge \frac{1}{4\sqrt{2\pi}}e^{-1/2}>0
\ee
This means for any $n_0$ large enough, we have an $n(n_0)\in \mathbb{Z}^+$ large enough, such that
\be\label{eq:discretetocts}
\mathbb{P}\left(|\widehat{f_{n(n_0),n_0}}|\ge A_{n_0}\right)\ge  \mathbb{P}\left(P_{n_0}\left(b_{n(n_0)}(\hat{x})\right)\ge A_{n_0}\right)\ge \frac{1}{4\sqrt{2\pi}}e^{-1/2}>0
\ee
Therefore by (\ref{eq:discretetocts}), for any $q>0$,
\begin{eqnarray*}
\|\widehat{f_{n(n_0),n_0}}\|_q\ge \left(\frac{(A_{n_0})^q}{4\sqrt{2\pi}}e^{-1/2} \right)^{1/q}
\end{eqnarray*}
which can be arbitrarily large by the fact that $A_{n_0}\rightarrow\infty$ as $n_0\rightarrow\infty$, which ends the proof of Case 2.
\end{proof}

\begin{rmk}
The method we used for the proof of Case 1 and Case 2 is valid for all $q>0$. In particular, for $q\ge 1$, we have an even better method. The crucial idea is to use the equivalence of $L^p$ ($p\ge 1$) norms of lacunary Fourier sequence mentioned in \cite{Gra14a:book}, p. 240, Theorem 3.7.4, which says that for $q\ge 1$, the $L^q$ norms of lacunary Fourier series with $\Lambda$ defined in Definition \ref{LacunarySeris} are equivalent, where the bounds between the $L^q$ norms only depend on $q$ and $\Lambda$. For Case 1, we define a sequence of functions on $X$:
$$
f_n:=\sum_{k=1}^n\frac{1}{\sqrt{k}} \mathbb{I}_{g^{-2^k}}
$$
Thus $\|f_n\|_p$ are uniformly bounded by the fact that $\sum (1/n)^{p/2}<\infty$ for $p>2$ and $\|\widehat{f_n}\|_2=\|f_n\|_2\rightarrow\infty$. We have, by noting that $g^{2^k}(\hat{x}):=\hat{x}(g^{2^k})$,
\begin{eqnarray*}
\widehat{f_n}(\hat{x})=\sum_{k=1}^n\frac{1}{\sqrt{k}}g^{2^k}(\hat{x})=\sum_{k=1}^n\frac{1}{\sqrt{k}}e^{i2^kb(\hat{x})}=:P_n(b(\hat{x}))
\end{eqnarray*}
where $P_n(y):=\sum_{k=1}^n\frac{1}{\sqrt{k}}e^{i2^ky}$. Then it is easy to check that $P_n$ is a lacunary Fourier series with $L^2$ norms tends to infinity. On the other hand, We have, by Lemma \ref{Rangeofg}, Lemma \ref{Distribofg} and Theorem 3.7.4 in \cite{Gra14a:book}, p. 240, there exists a constant $C_2(2)$ such that 
\begin{eqnarray*}
&&\|\widehat{f_n}\|_1=\mathbb{E}\left(\Big|\sum_{k=1}^n\frac{1}{\sqrt{k}}g^{2^k}\Big|\right)=\mathbb{E}\left(\big|P_n(b)\big|\right)=\|P_n\|_{L^1(\mathbb{T})}\\
&\ge & \left(C_2(2)\right)^{-1}\|P_n\|_{L^2(\mathbb{T})} =\left(C_2(2)\right)^{-1}\left(\mathbb{E}\left(\big|P_n(b)\big|^2\right)\right)^{1/2}=\left(C_2(2)\right)^{-1}\|\widehat{f_n}\|_2\rightarrow\infty 
\end{eqnarray*}
as $n\rightarrow\infty$, moreover by H\"{o}lder's inequality, we have that $\|\widehat{f_n}\|_q\rightarrow\infty$ for all $q\ge 1$, which provides the proof for Case 1.\\

For Case 2, we take a sequence $g_n$ in $X$ such that each $g_n$ has order $m_n$ with $m_n\nearrow\infty$. Define a sequence of function function for $n$ large enough such that $m_n>2^N$
\be\label{FunctionSeqFixN}
f_{n,N}:=\sum_{k=1}^N\frac{1}{\sqrt{k}} \mathbb{I}_{g_n^{-2^k}}
\ee
for some fixed $N\in\mathbb{Z}^+$ and
\be\label{FTFunctionSeqFixN}
\widehat{f_{n,N}}(\hat{x})=\sum_{k=1}^N\frac{1}{\sqrt{k}} g_n^{2^k}(\hat{x})=:P_N(g_n)
\ee
where
$$
P_N(y):=\sum_{k=1}^N\frac{1}{\sqrt{k}} y^{2^k}
$$
Therefore $\|f_{n,N}\|_p$ for $p>2$ is uniformly bounded (independent of $n$ and $N$) and $\|f_{n,N}\|_2\sim \log N$. Note that it is easy to verify that the distribution functions of $g_n$ on $[0, 2\pi]$ converge uniformly to the distribution function of the uniform distribution $U$, which means that $g_n$ converges to $U$ in distribution. Therefore for fixed $N$ in (\ref{FunctionSeqFixN}) and (\ref{FTFunctionSeqFixN}), the Fourier transform $P_N(g_n)\xrightarrow{d} P_N(U)$ by the fact that $P_N$ is continuous. Furthermore for fixed $N$, $|P_N(g_n)|$ and $|P_N(U)|$ are uniformly bounded by $\sum_{k=1}^N 1/\sqrt{k}$. So we have that $\mathbb{E}\left(|P_N(g_n)|\right)\rightarrow \mathbb{E} \left(|P_N(U)|\right)$ %and $\mathbb{E}\left(|P_N(g_n)|^2\right)\rightarrow \mathbb{E} \left(|P_N(U)|^2\right)$ 
as $n\rightarrow\infty$. On the other hand, we have that $P_N(e^{2\pi ix})$ is a lacunary Fourier series on $\mathbb{T}$, so we have, by Theorem 3.7.4 in \cite{Gra14a:book}, p. 240
\begin{eqnarray*}
&&\|\widehat{f_{n,N}}\|_1=\mathbb{E}\left(|P_N(g_n)|\right)\rightarrow \mathbb{E} \left(|P_N(U)|\right)=\|P_N\|_{L^1(\mathbb{T})}\\
&\ge & \left(C_2(2)\right)^{-1}\|P_N\|_{L^2(\mathbb{T})}=\left(C_2(2)\right)^{-1}\sum_{k=1}^N\frac{1}{k}\sim\log N
\end{eqnarray*}
This means that for every $N$, $\|\widehat{f_{n,N}}\|_1$ and hence $\|\widehat{f_{n,N}}\|_q$ for $q\ge 1$ is larger than order $\log N$ for $n$ large enough. But the $\|f_{n,N}\|_p$ for $p>2$ is uniformly bounded, which provides the proof of Case 2.
\end{rmk}

\begin{proof} [Proof of Proposition \ref{PropR3'XDis}, Case 3]
Suppose the orders of all elements in $X$ are uniformly bounded, then all lemmas, \ref{Rangeofg} through \ref{DualRestrictionInt} are valid. We define $r$, $\{g_k\}$ and $H_n$ the same as in the proof of Proposition \ref{PropR1'XDis} Case 3 except for substituting $\hat{X}$ by $X$. Then by Lemma \ref{IndepEqOrd}, $g_k$'s are mutually independent and identically distributed. Define a sequence of functions on $X$:
\begin{eqnarray}
f_n:=\sum_{k=1}^n\frac{1}{\sqrt{k}}\mathbb{I}_{g_k^{-1}}
\end{eqnarray}
Thus $\|f_n\|_p$ are uniformly bounded by $\sum (1/n)^{p/2}<\infty$ for $p>2$ and
\begin{eqnarray}\label{FToffnNotSum}
\widehat{f_n}(\hat{x})=\sum_{k=1}^n\frac{1}{\sqrt{k}}g_k(\hat{x})
\end{eqnarray}
by noting that $g_k(\hat{x}):=\hat{x}(g_k)$. We will use Lyapunov Central Limit Theorem \ref{LyapCLT} to prove that there exists a function, $h(n)$, of $n$ with $h(n)\nearrow\infty$ as $n\rightarrow\infty$, such that for every $n$ large enough, the probability 
\begin{eqnarray}\label{Target}
\mathbb{P}\big(Re(\widehat{f_n})\ge h(n)\big)\ge \frac{1}{2\sqrt{2\pi}}e^{-1/2} 
\end{eqnarray}
This means that for every $n$ large enough, we have a measurable set in $\hat{X}$ with a fixed positive probability such that the real part of the partial sum defined by the right hand side of (\ref{FToffnNotSum}) is greater than $h(n)\rightarrow\infty$ on this set. Therefore $\|\widehat{f_n}\|_q\rightarrow\infty$ for any $q\ge 1$, which provides the proof of Case 3. Define $X_k:=Re(g_k)$, then $X_k$'s are i.i.d. by Lemma \ref{IndepEqOrd} with $\mathbb{E}(X_k)=0$ by the fact that $\mathbb{E}(g_k)=0$. Denote $\sigma^2:=\mbox{Var}(X_k)>0$ by the fact that $X_k\neq 0$. Define $h(n)$ by
\begin{eqnarray}
\big(h(n)\big)^2:=\sum_{k=1}^n\mbox{Var}(\frac{X_k}{\sqrt{k}})=\sigma^2\sum_{k=1}^n\frac{1}{k}\sim\sigma^2 \log n
\end{eqnarray}
Therefore we have that $h(n)\nearrow\infty$ as $n\rightarrow\infty$. Define $Y_k:=X_k/\sqrt{k}$, hence $\mathbb{E}(Y_k)=0$ and $\mbox{Var}(Y_k)=\sigma^2/k$ and
\begin{eqnarray}
Re\big(\widehat{f_n}\big)=\sum_{k=1}^nY_k
\end{eqnarray}
We will verify Lyapunov condition (\ref{LyapCdt}). To see this, substitute $\mu_k=0$, $s_n=h(n)$ and $\sigma_k^2=\sigma^2/k$, denote $\sigma':=\mathbb{E}(|X_k|^{2+\delta})$ for some $\delta>0$, therefore $0<\sigma'<\infty$ by the fact that $X_k$ is bounded, we have
\begin{eqnarray*}
&&\frac{1}{s_n^{2+\delta}}\sum_{k=1}^n\mathbb{E}(|Y_k-\mu_k|)^{2+\delta}=\frac{1}{\big(h(n)\big)^{2+\delta}}\sum_{k=1}^n\mathbb{E}\Big(\Big|\frac{X_k}{\sqrt{k}}\Big|^{2+\delta}\Big)\\
&=&\frac{\sigma'}{\big(h(n)\big)^{2+\delta}}\sum_{k=1}^n\frac{1}{k^{1+\delta/2}}\rightarrow 0~\mbox{as}~n\rightarrow\infty
\end{eqnarray*}
by the fact that $h(n)\nearrow\infty$ and the fact that $\sum_{k=1}^n\frac{1}{k^{1+\delta/2}}$ is summable for any $\delta>0$, which provides (\ref{LyapCdt}). Thus, by applying Lyapunov CLT (\ref{LyapConcl}) we have
\begin{eqnarray*}
&&\mathbb{P}\big(Re(\widehat{f_n})\ge h(n)\big)=\mathbb{P}\Big(\sum_{k=1}^n Y_k\ge h(n)\Big)\\
&=& \mathbb{P}\Big(\frac{1}{h(n)}\sum_{k=1}^n Y_k\ge 1 \Big)\rightarrow \mathbb{P}\big(\mathcal{N}(0,1)\ge 1\big)~~~(\mbox{by}~(\ref{LyapConcl}))\\
&>& \frac{1}{2\sqrt{2\pi}}e^{-1/2}
\end{eqnarray*}
where the last inequality is by the lower bound for the normal distribution function:
$$
\mathbb{P}\big(\mathcal{N}(0,1)> t\big)>\frac{1}{\sqrt{2\pi}}\frac{t}{t^2+1}e^{-t^2/2}
$$
which provides (\ref{Target}) and the proof of Case 3 and Proposition \ref{PropR3'XDis}. Proposition \ref{PropR2'XDis}, \ref{PropR1'XDis} and \ref{PropR3'XDis} together provide Theorem \ref{ThmXDis}.
\end{proof}

\section{Implications for uncertainty principles}
\label{sec:up}

%\subsection{Weighted Entropic Uncertainty Principles}

%\begin{defn}\label{Density}
Suppose $X$ an LCA group associated with a Haar measure $\alpha$. A {\it probability density function} $f$ on $X$ is an non-negative measurable function on $X$ with
\be\label{DefofDensity}
\int_{x\in X}f(x)\alpha(dx)=1 
\ee
%\end{defn}

%\begin{defn}\label{RenyiEntropy}
Suppose we have a probability density function $f$ on an LCA group $X$ associated with a Haar measure $\alpha$. 
For $p\in (0,1)\cup (1,\infty)$, define the {\it R\'{e}nyi entropy of order $p$} of $f$ by
\begin{eqnarray}\label{DefRenyiEntrp}
h_{p}(f):=\frac{1}{1-p}\log \left(\int_{x\in X}|f(x)|^p \alpha(dx)\right) .
\end{eqnarray}
%\end{defn}

We have the following direct corollaries from Theorem \ref{ThmXCom} and Theorem \ref{ThmXDis}:

\begin{cor}\label{WeightedRenyiEntrp}
Suppose $X$ is a compact or a discrete LCA group associated with a Haar measure $\alpha$, and $\hat{X}$ the dual group associated with the dual Haar measure $\hat{\alpha}$ normalized as in Proposition \ref{HarMeas}. Define the following two regions (see \mbox{Figure 4}):
\begin{align*}
U_C&:=\Big\{\Big(\frac{1}{p},\frac{1}{q}\Big)\in [0,\infty)^2:\frac{1}{p}+\frac{1}{q}\le 1, \frac{1}{p}> \frac{1}{2}\Big\},\\
U_D&:=\Big\{\Big(\frac{1}{p},\frac{1}{q}\Big)\in [0,\infty)^2:\frac{1}{p}+\frac{1}{q}\ge 1, \frac{1}{q}< \frac{1}{2}\Big\}.
\end{align*}
For any probability density function $|\psi|^2$ on $X$ (which means $|\hat{\psi}|^2$ is also a probability density function on $\hat{X}$) we have the following weighted R\'{e}nyi entropy uncertainty principle that holds for $(1/p,1/q)\in U_C$ if $X$ is compact and for $(1/p,1/q)\in U_D$ if $X$ is discrete:
\begin{eqnarray}\label{WtdRenyiEntr}
\left(\frac{1}{p}-\frac{1}{2}\right)h_{p/2}(|\psi|^2)+\left(\frac{1}{2}-\frac{1}{q}\right)h_{q/2}(|\hat{\psi}|^2)\ge -\log C_{p,q}
\end{eqnarray}
where $C_{p,q}$ is the norm of the Fourier operator as in Theorem \ref{ThmXCom} and Theorem \ref{ThmXDis}.
\end{cor}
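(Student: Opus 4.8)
The plan is to read the weighted inequality (\ref{WtdRenyiEntr}) as nothing more than the logarithm of the operator-norm bound $\|\hat{\psi}\|_q\le C_{p,q}\|\psi\|_p$, re-expressed through the definition (\ref{DefRenyiEntrp}) of the R\'enyi entropy. Before carrying out that reformulation I would first dispatch the bookkeeping that makes it legitimate. On $U_C$ one has $\frac1q\le 1-\frac1p<\frac12$, so $(\frac1p,\frac1q)$ lies in the region $R_1$ of Theorem~\ref{ThmXCom}; on $U_D$ one has $\frac1p\ge 1-\frac1q>\frac12$, so $(\frac1p,\frac1q)$ lies in the region $R_2'$ of Theorem~\ref{ThmXDis}. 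Hence on the relevant region the constant $C_{p,q}$ is finite, and the defining bound (\ref{FTNorm}) in the compact case, respectively (\ref{FTNormdisc}) in the discrete case, applies to $f=\psi$ (note that since $\frac1p\in(\frac12,1]$ on $U_C$ and $\psi\in L^2(X)$ with $X$ of finite measure, $\psi$ lies in the domain $L^1(X)\cap L^p(X)=L^p(X)$; in the discrete case $p<2$ puts $\psi$ in $L^2(X)\cap L^p(X)=L^p(X)$ whenever $\psi\in L^p$). Observe also that on both $U_C$ and $U_D$ one has $p<2<q$ strictly, so the two weights $\frac1p-\frac12$ and $\frac12-\frac1q$ are strictly positive, and the R\'enyi orders $p/2<1$ and $q/2>1$ both avoid the excluded value $1$, so (\ref{DefRenyiEntrp}) is applicable to $|\psi|^2$ and $|\hat{\psi}|^2$.

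The key computational step is the identity linking an $L^p$-norm of $\psi$ to a R\'enyi entropy of the density $|\psi|^2$. Since $|\psi|^2$ is a probability density on $X$ and $|\psi|^p=(|\psi|^2)^{p/2}$, the definition (\ref{DefRenyiEntrp}) of order $p/2$ gives $\log\|\psi\|_p=\frac1p\log\int_X(|\psi|^2)^{p/2}\,\alpha(dx)=\frac1p(1-\frac p2)h_{p/2}(|\psi|^2)=(\frac1p-\frac12)h_{p/2}(|\psi|^2)$. By Parseval's identity (Proposition~\ref{Unitary}) $|\hat{\psi}|^2$ is a probability density on $\hat X$, so the identical computation on $\hat X$ yields $\log\|\hat{\psi}\|_q=(\frac1q-\frac12)h_{q/2}(|\hat{\psi}|^2)$.

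Finally I would combine these: taking logarithms in $\|\hat{\psi}\|_q\le C_{p,q}\|\psi\|_p$ and substituting the two identities gives $(\frac1q-\frac12)h_{q/2}(|\hat{\psi}|^2)\le \log C_{p,q}+(\frac1p-\frac12)h_{p/2}(|\psi|^2)$, and rearranging, together with $-(\frac1q-\frac12)=\frac12-\frac1q$, produces exactly (\ref{WtdRenyiEntr}). To cover degenerate cases I would note that if $\psi\notin L^p(X)$ then $\int_X|\psi|^p=\infty$, and since $p/2<1$ this forces $h_{p/2}(|\psi|^2)=+\infty$, making the left-hand side of (\ref{WtdRenyiEntr}) equal to $+\infty$ and the inequality trivial; conversely, if $\psi\in L^p(X)$ then finiteness of $C_{p,q}$ guarantees $\hat{\psi}\in L^q(\hat X)$, so all quantities are finite and the logarithmic manipulation is rigorous. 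There is essentially no analytic obstacle here, as the statement is a direct reformulation of the operator norm; the only step demanding genuine care is the region-and-sign bookkeeping of the first paragraph, namely confirming the inclusions $U_C\subset R_1$ and $U_D\subset R_2'$ and the strict positivity of the weights, which is what guarantees both that $C_{p,q}$ is finite and that the entropic inequality points in the asserted direction.
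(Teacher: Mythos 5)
Your core argument is exactly the paper's intended proof: the paper offers no separate argument for this corollary (it is announced as a ``direct corollary'' of Theorems~\ref{ThmXCom} and \ref{ThmXDis}), and what makes it direct is precisely what you wrote — the inclusions $U_C\subset R_1$ and $U_D\subset R_2'$, the identity $\log\|\psi\|_p=\big(\tfrac1p-\tfrac12\big)h_{p/2}(|\psi|^2)$ together with its dual on $\hat{X}$ via Proposition~\ref{Unitary}, and taking logarithms in $\|\hat{\psi}\|_q\le C_{p,q}\|\psi\|_p$. All of that bookkeeping (finiteness of $C_{p,q}$ on the relevant regions, strict positivity of both weights, $p<2<q$ so the R\'enyi orders avoid $1$) is correct, as is your remark that in the compact case $\psi\in L^2(X)$ with $\alpha(X)<\infty$ automatically lies in $L^p(X)$ for $p<2$, so no degenerate case arises there.

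The one genuine flaw is your handling of the degenerate case in the discrete setting. You claim that if $\psi\notin L^p(X)$ then $h_{p/2}(|\psi|^2)=+\infty$ forces the left-hand side of (\ref{WtdRenyiEntr}) to be $+\infty$; but the other term can simultaneously be $-\infty$. Indeed, on $U_D$ one has $q\le p'$, so $\tfrac1q\ge\tfrac1{p'}$; taking $X=\mathbb{Z}$ and $\hat{\psi}(\theta)\asymp|\theta|^{-a}$ with $\tfrac1q\le a<\tfrac12$, one gets $\hat{\psi}\in L^2(\mathbb{T})\setminus L^q(\mathbb{T})$ while the coefficients $\psi(n)\asymp|n|^{a-1}$ fail to lie in $\ell^p$ (because $a\ge\tfrac1q\ge\tfrac1{p'}$). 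For such $\psi$ one has $h_{p/2}(|\psi|^2)=+\infty$ but also $h_{q/2}(|\hat{\psi}|^2)=\tfrac{1}{1-q/2}\log\big(\int|\hat{\psi}|^q\big)=-\infty$ since $1-q/2<0$, so the left-hand side of (\ref{WtdRenyiEntr}) is $\infty-\infty$, not $+\infty$: the inequality as written is then ill-defined rather than trivially true. The clean repair is to observe that the \emph{unrearranged} form $\big(\tfrac1q-\tfrac12\big)h_{q/2}(|\hat{\psi}|^2)\le \log C_{p,q}+\big(\tfrac1p-\tfrac12\big)h_{p/2}(|\psi|^2)$ holds unambiguously in the extended reals (both sides are $+\infty$ in the degenerate case), and that the rearranged form (\ref{WtdRenyiEntr}) is asserted only for those $\psi$ for which the left-hand side is well defined — equivalently, to state the result for $\psi\in L^p(X)$, where, as you correctly note, finiteness of $C_{p,q}$ makes every quantity finite and the manipulation rigorous.
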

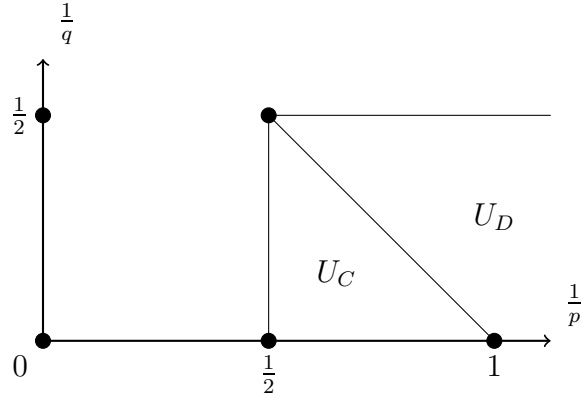
\begin{figure}[H]
	\centering
	% You can scale your picture here
	\begin{tikzpicture}[scale=1.5,dot/.style={draw,circle,inner sep=2pt,fill},lab/.style={outer sep=2pt,font=\large}]
		\draw [thick,->](0,0)--(0,2.5);
		\draw [thick,->](0,0)--(4.5,0);
		%\draw (2,4)--(7,4);
		%\draw (4,0)--(4,4);
		\draw (2,2)--(4,0);
		%\draw (2,2)--(2,5);
		%\draw (2,2)--(0,2);
		\draw (2,0)--(2,2);
		%\draw (4,0)--(4,2);
		\draw (4.5,2)--(2,2);
		%\draw (6,0)--(6,4);
		\node[dot] at (0,0) {} node[lab] at (0,0) [below left] {$0$};
		\node[dot] at (2,0) {} node[lab] at (2,0) [below] {$\frac{1}{2}$};
		\node[dot] at (4,0) {} node[lab] at (4,0) [below] {$1$} node[lab] at (4.5,0) [above right] {$\frac{1}{p}$};
		%\node[dot] at (6,0) {} node[lab] at (6,0) [below] {$\frac{1}{p_0}$};
		\node[dot] at (0,2) {} node[lab] at (0,2) [left] {$\frac{1}{2}$};
		%\node[dot] at (0,4) {} node[lab] at (0,4) [left] {$1$} 
		\node[lab] at (0,2.5) [above right] {$\frac{1}{q}$};
		\node[dot] at (2,2) {};
		%\node[dot] at (4,2) {};
		%\node[dot] at (4,4) {};
		%\node[dot] at (2,4) {};
		%\node[dot] at (6,0) {};
		%\node[dot] at (6,4) {};
	
		\node[lab] at (2.6,0.6) {$U_C$};
		\node[lab] at (4.0,1.1) {$U_D$};
		
		%\node[lab] at (3,2.5) {$R_2'$};
		%\node[lab] at (4,2.1) {$R_{21,p_0}'$};
		
		%\node[lab] at (1.6,1) {$R_1'$};
		
		%\node[lab] at (4,5.5) {$R_{22}'$};
		
	\end{tikzpicture}
	\caption{Weighted uncertainty principle regions.}
	\label{figure:4}
\end{figure}

\begin{cor}\label{WeightedRenyiEntrpDNE}
Suppose $X$ is a compact or a discrete LCA group associated with a Haar measure $\alpha$, and $\hat{X}$ the dual group 
associated with the dual Haar measure $\hat{\alpha}$ normalized as in Proposition \ref{HarMeas}. 
Suppose also that $X$ is not finite.
Define the following two regions (see Figure 5):
\begin{align*}
U_{CN}&:=\Big\{\Big(\frac{1}{p},\frac{1}{q}\Big)\in [0,\infty)^2:\frac{1}{p}+\frac{1}{q}> 1, \frac{1}{q}\le \frac{1}{2}\Big\},\\
U_{DN}&:=\Big\{\Big(\frac{1}{p},\frac{1}{q}\Big)\in [0,\infty)^2:\frac{1}{p}+\frac{1}{q}< 1, \frac{1}{p}\ge \frac{1}{2}\Big\}.
\end{align*}
For any constant $C<0$ and any pair $(p,q)$ satisfying $(1/p,1/q)\in U_{CN}$ if $X$ is compact and $(1/p,1/q)\in U_{DN}$ if $X$ is discrete, there exists a probability density function $|\psi|^2$ on $X$ (which means $|\hat{\psi}|^2$ is also a probability density function on $\hat{X}$) depending on $C$, $p$ and $q$ such that:
\begin{eqnarray}\label{WtdRenyiEntrDNE}
\left(\frac{1}{p}-\frac{1}{2}\right)h_{p/2}(|\psi|^2)+\left(\frac{1}{2}-\frac{1}{q}\right)h_{q/2}(|\hat{\psi}|^2)<C.
\end{eqnarray}
\end{cor}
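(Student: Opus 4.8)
The plan is to observe that the left-hand side of (\ref{WtdRenyiEntrDNE}) is exactly the expression appearing in the weighted uncertainty principle of Corollary \ref{WeightedRenyiEntrp}, and then to drive it to $-\infty$ using the very sequences already constructed to show $C_{p,q}=\infty$. First I would rewrite the R\'enyi entropy (\ref{DefRenyiEntrp}) in terms of $L^p$-norms: for any $\psi$ with $|\psi|^2$ a probability density one has $h_{p/2}(|\psi|^2)=\frac{2p}{2-p}\log\|\psi\|_p$ and $h_{q/2}(|\hat\psi|^2)=\frac{2q}{2-q}\log\|\hat\psi\|_q$, so that
\begin{eqnarray*}
\Big(\frac1p-\frac12\Big)h_{p/2}(|\psi|^2)+\Big(\frac12-\frac1q\Big)h_{q/2}(|\hat\psi|^2)=\log\|\psi\|_p-\log\|\hat\psi\|_q=\log\frac{\|\psi\|_p}{\|\hat\psi\|_q}.
\end{eqnarray*}
Hence it suffices to produce, for each admissible $(p,q)$, a function $\psi$ with $\|\psi\|_2=1$ -- so that both $|\psi|^2$ and $|\hat\psi|^2$ are probability densities by Proposition \ref{Unitary} -- for which $\|\hat\psi\|_q/\|\psi\|_p$ is arbitrarily large.

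These functions are already available from the proofs that the norm is infinite. In the compact case $U_{CN}\subset\{1/p+1/q>1\}$, and Proposition \ref{PropR2XCom} supplies in each of its three cases a sequence $f_k$ with $\|\hat f_k\|_q/\|f_k\|_p\to\infty$; I would set $\psi_k:=f_k/\|f_k\|_2$, so that $\|\psi_k\|_2=1$ and, the normalization cancelling, $\|\hat\psi_k\|_q/\|\psi_k\|_p=\|\hat f_k\|_q/\|f_k\|_p\to\infty$. By the identity above the left-hand side of (\ref{WtdRenyiEntrDNE}) then equals $\log(\|\psi_k\|_p/\|\hat\psi_k\|_q)\to-\infty$, so it falls below any prescribed $C<0$ for $k$ large, and $\psi:=\psi_k$ is the required density. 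The discrete case is handled identically, using the sequences from Proposition \ref{PropR1'XDis}, since $U_{DN}\subset\{1/p+1/q<1\}$ and those constructions again yield $\|\hat f_k\|_q/\|f_k\|_p\to\infty$.

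The step requiring care -- and the reason the regions $U_{CN}$, $U_{DN}$ carry an extra inequality beyond $\{1/p+1/q>1\}$ and $\{1/p+1/q<1\}$ -- is the finiteness of the two R\'enyi entropies. In the compact case $\psi_k$ is a normalized indicator, so $\|\psi_k\|_p<\infty$, while $1/q\le1/2$ forces $q\ge2$, whence $\hat\psi_k\in\ell^2(\hat X)\subseteq\ell^q(\hat X)$ and $\|\hat\psi_k\|_q<\infty$; in the discrete case $\psi_k$ is finitely supported and $\hat\psi_k$ is a trigonometric polynomial on the compact dual, so every norm in sight is finite. The boundary values $q=2$ (compact) and $p=2$ (discrete) are harmless, because the coefficient multiplying the offending entropy term -- whose order would be the excluded value $1$ -- is then zero and the term drops out of (\ref{WtdRenyiEntrDNE}). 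I expect this bookkeeping, rather than the divergence itself, to be the only real obstacle: the blow-up of the norm ratio is immediate from the earlier propositions, so the whole content lies in checking that the normalized extremizers genuinely define conjugate probability densities with finite conjugate entropies throughout the stated regions.
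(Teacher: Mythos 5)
Your proof is correct and is exactly the argument the paper intends: the paper states this result as a direct corollary of Theorems \ref{ThmXCom} and \ref{ThmXDis} with no further proof, the content being precisely your identity $\left(\frac{1}{p}-\frac{1}{2}\right)h_{p/2}(|\psi|^2)+\left(\frac{1}{2}-\frac{1}{q}\right)h_{q/2}(|\hat{\psi}|^2)=\log\left(\|\psi\|_p/\|\hat{\psi}\|_q\right)$ combined with the $L^2$-normalized extremizing sequences from Propositions \ref{PropR2XCom} and \ref{PropR1'XDis} (whose stronger forms cover $U_{CN}\subset\{1/p+1/q>1\}$ and $U_{DN}\subset R_1'$). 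Your additional bookkeeping---scale-invariance of the norm ratio, finiteness of all norms for the explicit indicator and finitely supported extremizers, and the degenerate boundary cases $q=2$ (compact), $p=2$ (discrete)---is exactly what is needed to make the ``direct corollary'' rigorous.
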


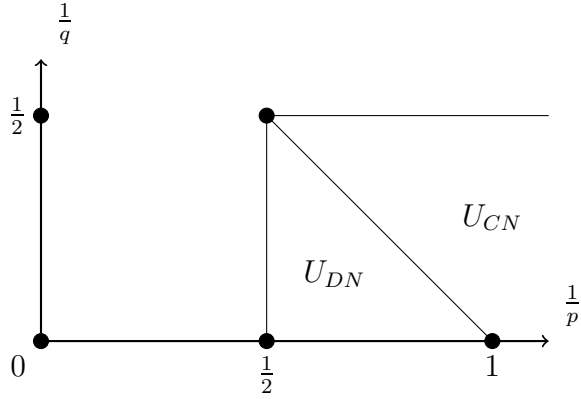
\begin{figure}[H]
	\centering
	% You can scale your picture here
	\begin{tikzpicture}[scale=1.5,dot/.style={draw,circle,inner sep=2pt,fill},lab/.style={outer sep=2pt,font=\large}]
		\draw [thick,->](0,0)--(0,2.5);
		\draw [thick,->](0,0)--(4.5,0);
		%\draw (2,4)--(7,4);
		%\draw (4,0)--(4,4);
		\draw (2,2)--(4,0);
		%\draw (2,2)--(2,5);
		%\draw (2,2)--(0,2);
		\draw (2,0)--(2,2);
		%\draw (4,0)--(4,2);
		\draw (4.5,2)--(2,2);
		%\draw (6,0)--(6,4);
		\node[dot] at (0,0) {} node[lab] at (0,0) [below left] {$0$};
		\node[dot] at (2,0) {} node[lab] at (2,0) [below] {$\frac{1}{2}$};
		\node[dot] at (4,0) {} node[lab] at (4,0) [below] {$1$} node[lab] at (4.5,0) [above right] {$\frac{1}{p}$};
		%\node[dot] at (6,0) {} node[lab] at (6,0) [below] {$\frac{1}{p_0}$};
		\node[dot] at (0,2) {} node[lab] at (0,2) [left] {$\frac{1}{2}$};
		%\node[dot] at (0,4) {} node[lab] at (0,4) [left] {$1$} 
		\node[lab] at (0,2.5) [above right] {$\frac{1}{q}$};
		\node[dot] at (2,2) {};
		%\node[dot] at (4,2) {};
		%\node[dot] at (4,4) {};
		%\node[dot] at (2,4) {};
		%\node[dot] at (6,0) {};
		%\node[dot] at (6,4) {};
	
		\node[lab] at (2.6,0.6) {$U_{DN}$};
		\node[lab] at (4.0,1.1) {$U_{CN}$};
		
		%\node[lab] at (3,2.5) {$R_2'$};
		%\node[lab] at (4,2.1) {$R_{21,p_0}'$};
		
		%\node[lab] at (1.6,1) {$R_1'$};
		
		%\node[lab] at (4,5.5) {$R_{22}'$};

	\end{tikzpicture}
	\caption{Regions such that weighted uncertainty principle does not exist.}
	\label{figure:2}
\end{figure}

Corollary~\ref{WeightedRenyiEntrpDNE} is particularly interesting because it identifies regions where
a natural weighted uncertainty principle  {\it fails} to hold for infinite groups that are compact or discrete.

Corollaries~\ref{WeightedRenyiEntrp} and \ref{WeightedRenyiEntrpDNE} are
the most that can be extracted directly from our results on the $(p,q)$-norms of the Fourier
transform. However, it is possible to obtain other expressions of the entropic uncertainty principle
by exploiting the monotonicity property of the R\'enyi entropies. 
%\subsection{Unweighted Entropic Uncertainty Principle for Certain Classes of LCA Groups}
Indeed, if $X$ is $\mathbb{R}^n$, $\mathbb{T}^n$, $\mathbb{Z}^n$ or $\mathbb{Z}/m\mathbb{Z}$, 
Zozor, Portesi and Vignat \cite{ZPV08} used this idea to obtain unweighted R\'enyi entropic uncertainty 
principle of the form 
\be\label{UwtdEntrpPrspecial}
h_{\alpha}(|\psi|^2)+h_{\beta}(|\hat{\psi}|^2)\ge B_{\alpha,\beta}
\ee
for corresponding domains of $\alpha$ and $\beta$ with the corresponding sharp constants $B_{\alpha,\beta}$. 
We now observe that a similar result holds for  general LCA groups.

\begin{thm}\label{thm:unweighteduncprip}
Let $X$ be an LCA group isomorphic to a direct product of a finite number of copies of $\mathbb{R}$ and an LCA group $B$ which contains an open compact subgroup: $X=\mathbb{R}^n\times B$, and let $X$ be equipped with a Haar measure $\alpha$. Let $\hat{X}$ be the dual group with the Haar measure $\hat{\alpha}$ normalized to be such that the Plancherel identity is valid. Then for any probability density function $|\psi^2|$ on $X$ and any $p\ge 0$, $q\ge 2$ such that $1/p+1/q\ge 1$, we have the R\'enyi entropic uncertainty principle
\begin{align}\label{eq:unweighteduncprip}
h_{p/2}(|\psi|^2)+h_{q/2}(|\hat{\psi}|^2)& \ge n\log C(p,q) ,
\end{align}
where 
\begin{align*}
C(p,q)=\left(\max\left(q^{1/(q-2)}q'^{1/q'-2},p^{1/(p-2)}p'^{1/p'-2}\right)\right),
\end{align*}
where $p'$ and $q'$ are the H\"older duals of $p$ and $q$ respectively. Further if $X$ is a discrete or compact abelian group equipped with Haar measure $\alpha$, let $\hat{X}$ be the dual group with the Haar measure $\hat{\alpha}$ normalized so that the Plancherel identity is valid. Then for any probability density function $|\psi^2|$ on $X$ and any non-negative $p\ge 0$, $q\ge 0$ such that $1/p+1/q\ge 1$, we have the sharp R\'enyi entropic uncertainty principle
\begin{align}\label{eq:unweighteduncpripdc}
h_{p/2}(|\psi|^2)+h_{q/2}(|\hat{\psi}|^2)& \ge 0 .
\end{align}
\end{thm}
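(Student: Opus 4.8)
The bridge between operator norms and R\'enyi entropies is the elementary identity, valid for any probability density $|\psi|^2$, that
\[
\log\|\psi\|_p = \left(\frac1p - \frac12\right)h_{p/2}(|\psi|^2), \qquad \log\|\hat\psi\|_q = \left(\frac1q - \frac12\right)h_{q/2}(|\hat\psi|^2),
\]
which is precisely the computation already underlying Corollary~\ref{WeightedRenyiEntrp}. Feeding a Hausdorff--Young inequality $\|\hat\psi\|_q \le C\|\psi\|_p$ into this identity and dividing by the (nonzero, signed) weight turns it into a lower bound on a weighted sum of the two entropies. The plan is to apply this not at a general $(p,q)$ but at a \emph{conjugate} pair $q=p'$, where the two weights $\frac1p-\frac12$ and $\frac12-\frac1{p'}$ coincide, so that the weighted sum collapses to the genuinely unweighted sum $h_{p/2}(|\psi|^2)+h_{p'/2}(|\hat\psi|^2)$; one then slides off the conjugate line to the target exponents using the monotonicity of $h_\alpha$ in the order $\alpha$.

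For the first assertion I would argue as follows. On $X=\RL^n\times B$ the Fourier transform factors and Hausdorff--Young tensorizes: Beckner \cite{Bec75} gives $\|\cdot\|_{p'}\le A_p^n\|\cdot\|_p$ on $\RL^n$ with $A_p=(p^{1/p}/p'^{1/p'})^{1/2}$, while the factor $B$, having an open compact subgroup, contributes sharp constant $1$ (the mechanism behind Theorems~\ref{ThmXCom} and \ref{ThmXDis}), so that $\|\hat\psi\|_{p'}\le A_p^n\|\psi\|_p$ on all of $X$ for $1<p\le2$. Inserting this into the conjugate-pair identity and dividing by $\frac1p-\frac12>0$ yields $h_{p/2}(|\psi|^2)+h_{p'/2}(|\hat\psi|^2)\ge -n\log A_p/(\frac1p-\frac12)$, and a short computation rewrites the right-hand side as $n\log\big(p^{1/(p-2)}p'^{1/(p'-2)}\big)$. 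Since $\frac1p+\frac1q\ge1$ forces $q\le p'$, i.e.\ $q/2\le p'/2$, monotonicity gives $h_{q/2}(|\hat\psi|^2)\ge h_{p'/2}(|\hat\psi|^2)$, hence the bound with the $p$-centered constant. Running the same argument centered at $q$ --- applying Beckner at the conjugate pair $(q',q)$, legitimate because $q\ge2$, and using $p\le q'$ for the monotonicity step on the $\psi$-entropy --- produces the bound with the $q$-centered constant $q^{1/(q-2)}q'^{1/(q'-2)}$. Taking the larger of the two lower bounds gives $n\log C(p,q)$.

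For the second assertion the same scheme runs with the constant $1$ in place of $A_p^n$: on a compact or discrete $X$, Theorems~\ref{ThmXCom} and \ref{ThmXDis} give $C_{p,p'}=\alpha(X)^{1-1/p-1/p'}=1$ (respectively its discrete analogue) on the entire conjugate line, so the conjugate-pair identity yields $h_{p/2}(|\psi|^2)+h_{p'/2}(|\hat\psi|^2)\ge0$ there. To reach an arbitrary $(1/p,1/q)$ with $1/p+1/q\ge1$, note that $h_{p/2}(|\psi|^2)+h_{q/2}(|\hat\psi|^2)$ is coordinatewise non-decreasing in $(1/p,1/q)$ (again by monotonicity of R\'enyi entropy in the order), so it suffices to prove nonnegativity on the boundary line $1/p+1/q=1$; every point of the region projects down-and-left onto some point of that line, where the estimate already holds. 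A symmetric application, swapping the roles of $\psi$ and $\hat\psi$ via $\hat{\hat\psi}(x)=\psi(-x)$ from Theorem~\ref{thm:fouriertwice}, covers the half of the line with $p>2$.

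The principal obstacle I anticipate is the structural input for $X=\RL^n\times B$ that the $B$-factor carries Hausdorff--Young constant exactly $1$ and tensorizes cleanly with Beckner's constant on $\RL^n$: the paper's theorems are proved for groups that are themselves compact or discrete, whereas a group with an open compact subgroup (such as $\IN\times\mathbb{T}$ or $\mathbb{Q}_p$) is neither, so this step requires either a reduction to the open compact subgroup or an external citation for the sharp constant. A secondary, purely bookkeeping, difficulty is tracking which conjugate indices are actually defined and finite at the extreme ends of the region (for instance $p\le1$, $p=2$, or $q=\infty$), where one of the two centered bounds degenerates or is unavailable and the formula for $C(p,q)$ must be interpreted by the appropriate limit.
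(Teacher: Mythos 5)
Your proposal is correct and follows essentially the same route as the paper's own proof: sharp Hausdorff--Young on the conjugate line $1/p+1/q=1$ (with the $q<2$ half of that line handled in the compact/discrete case by swapping $\psi$ and $\hat{\psi}$ via $\hat{\hat{\psi}}(x)=\psi(-x)$ from Theorem~\ref{thm:fouriertwice}), followed by monotonicity of the R\'enyi entropy in its order to reach all $(p,q)$ with $1/p+1/q>1$, exactly as the paper does with its choice $(\tilde{p},\tilde{q})=(p,p')$ or $(q',q)$. The two obstacles you flag --- the sharp constant for the factor $B$ with an open compact subgroup, and the degenerate endpoint bookkeeping --- are points the paper itself passes over silently by simply invoking ``Hausdorff--Young,'' so your treatment is if anything more explicit than the original.
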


\begin{proof}%[Proof of Lemma \ref{lem:allq}]
%\begin{lem}\label{lem:allq}
We first have (\ref{eq:unweighteduncprip}) holds for all $p\ge 0$, $q\ge 2$ with $1/p+1/q=1$ by Hausdorff-Young inequality. Thus For any pair of $p\ge 0$ and $q\ge 2$ with $1/p+1/q> 1$, consider %Lemma \ref{lem:allq} 
$(\tilde{p},\tilde{q}):=(p,p')$ or $(\tilde{p},\tilde{q}):=(q',q)$ and by monotonicity of R\'enyi entropy, we have
\begin{eqnarray*}
h_{p/2}(|\psi|^2)+h_{q/2}(|\hat{\psi}|^2)\ge h_{\tilde{p}/2}(|\psi|^2)+h_{\tilde{q}/2}(|\hat{\psi}|^2)\ge n\log C(p,q)
\end{eqnarray*}
which provides (\ref{eq:unweighteduncprip}). 

Next we prove \eqref{eq:unweighteduncpripdc}. Similarly we have that (\ref{eq:unweighteduncpripdc}) holds for $1/p+1/q=1$ with $q\ge 2$ by Hausdorff-Young inequality. For $q<2$ and $p> 2$, we treat $\hat{\psi}$ a function on $\hat{X}$ and apply Hausdorff-Young inequality again, we have
\begin{eqnarray}\label{eq:unweighteduncpripq<2}
h_{p/2}(|\hat{\hat{\psi}}|^2)+h_{q/2}(|\hat{\psi}|^2)\ge 0
\end{eqnarray}
By the fact that $\hat{\psi}\in L^2(\hat{X})$ hence $\hat{\hat{\psi}}\in L^2(X)$ and $\hat{\hat{\psi}}(x)=\psi(-x)$ by Theorem \ref{thm:fouriertwice}. So by (\ref{eq:unweighteduncpripq<2}), we have
\begin{eqnarray}
h_{p/2}(|\psi|^2)+h_{q/2}(|\hat{\psi}|^2)=h_{p/2}(|\hat{\hat{\psi}}|^2)+h_{q/2}(|\hat{\psi}|^2)\ge 0
\end{eqnarray}
which provides the desired inequality when $1/p+1/q=1$.
%\begin{proof}[Proof of Theorem \ref{thm:unweighteduncprip}]

For any pair of $(p,q)$ with $1/p+1/q> 1$, consider %Lemma \ref{lem:allq} 
$(\tilde{p},\tilde{q}):=(p,p')$ or $(\tilde{p},\tilde{q}):=(q',q)$ and by monotonicity of R\'enyi entropy, we have
\begin{eqnarray*}
h_{p/2}(|\psi|^2)+h_{q/2}(|\hat{\psi}|^2)\ge h_{\tilde{p}/2}(|\psi|^2)+h_{\tilde{q}/2}(|\hat{\psi}|^2)\ge0
\end{eqnarray*}
which provides (\ref{eq:unweighteduncpripdc}). Moreover, this bound can be attained 
by any normalized frequency basis if $X$ is compact (i.e., functions of the form $\gamma/\sqrt{\alpha(X)}$ with any $\gamma\in\hat{X}$),
and by any time basis if $X$ is discrete (i.e., normalized delta functions on $X$). 
\end{proof}

Not suprisingly given that a key step in the proof was rewriting the Hausdorff-Young inequality  
as an entropy inequality, Theorem \ref{thm:unweighteduncprip} is a direct generalization
of Hirschman's entropic uncertainty principle (as improved by Beckner through his determination
of the sharp constant),
\ben
h(|\psi|^2)+h(|\hat{\psi}|^2)& \ge  n\log(e/2) .
\een

On the other hand, by applying inequality \eqref{eq:unweighteduncpripdc} in Theorem \ref{thm:unweighteduncprip} and letting $p$ and $q$ go to 0, 
we have an uncertainty principle related to the support of a density function.

\begin{cor}\label{cor:Unweightedpripqgoto0}
If $X$ is a compact or discrete abelian group, under the assumption of Theorem \ref{thm:unweighteduncprip}, then for any probability density function $|\psi^2|$ on $X$, we have
\begin{eqnarray}\label{eq:unweightedpq=0}
\alpha\left(\mbox{supp}(\psi)\right)
\hat{\alpha}\left(\mbox{supp}\left(\hat{\psi}\right)\right)\ge 1
\end{eqnarray}
Moreover, if $X$ is compact or discrete, then the equality can be attained by any normalized frequency basis if $X$ is compact and any normalized time basis if $X$ is discrete.
\end{cor}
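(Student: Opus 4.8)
The plan is to deduce \eqref{eq:unweightedpq=0} by sending $p$ and $q$ to $0$ in the sharp entropic inequality \eqref{eq:unweighteduncpripdc} of Theorem \ref{thm:unweighteduncprip}. That inequality is available whenever $1/p+1/q\ge 1$, a constraint that is automatically satisfied once $p$ and $q$ are small, so I may use it freely along the diagonal $p=q\to 0^+$.

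First I would pin down the limiting value of the R\'enyi entropy as its order tends to $0$. I claim that for any probability density $g$ on $X$ whose support $S:=\mathrm{supp}(g)$ has finite Haar measure, $\lim_{s\to 0^+}h_s(g)=\log\alpha(S)$. Indeed, by \eqref{DefRenyiEntrp} we have $h_s(g)=\frac{1}{1-s}\log\int_S g^s\,\alpha(dx)$; on $S$ the integrand satisfies $g^s\to 1$ pointwise as $s\downarrow 0$ and is dominated by $1+g$, which is integrable over $S$ since $\alpha(S)<\infty$ and $\int_X g\,\alpha(dx)=1$. Dominated convergence then gives $\int_S g^s\,\alpha(dx)\to\alpha(S)$ and hence $h_s(g)\to\log\alpha(S)$. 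Applying this to $g=|\psi|^2$ (whose support coincides with $\mathrm{supp}(\psi)$) and to $g=|\hat\psi|^2$, and passing to the limit in \eqref{eq:unweighteduncpripdc}, I obtain
\[
\log\alpha(\mathrm{supp}(\psi))+\log\hat\alpha(\mathrm{supp}(\hat\psi))\ge 0,
\]
which is exactly \eqref{eq:unweightedpq=0} after exponentiating. The case in which one of the two supports has infinite measure is disposed of separately and trivially: the left side of \eqref{eq:unweightedpq=0} is then $+\infty$, while the other factor is strictly positive because $\psi\not\equiv 0$ forces $\hat\psi\not\equiv 0$.

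For the equality statement I would simply compute both factors on the claimed extremizers using Proposition \ref{HarMeas} and \eqref{Dta}. When $X$ is compact and $\psi=\gamma/\sqrt{\alpha(X)}$ for $\gamma\in\hat X$, we have $|\psi|^2\equiv \alpha(X)^{-1}$, so $\mathrm{supp}(\psi)=X$ has measure $\alpha(X)$, while $\hat\psi$ is a multiple of the delta at $\gamma$, so $\mathrm{supp}(\hat\psi)=\{\gamma\}$ has measure $\hat\alpha(\gamma)=\alpha(X)^{-1}$; the product is $1$. When $X$ is discrete and $\psi$ is a normalized delta at some $x_0$, $\mathrm{supp}(\psi)=\{x_0\}$ has measure $\alpha(x_0)=\hat\alpha(\hat X)^{-1}$, while $|\hat\psi|^2\equiv\alpha(x_0)$ is supported on all of $\hat X$, so $\mathrm{supp}(\hat\psi)=\hat X$ has measure $\hat\alpha(\hat X)$; again the product is $1$.

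The only genuinely delicate step is the order-$0$ limit of the R\'enyi entropy, i.e.\ justifying the dominated convergence above and, in tandem, separating the finite- and infinite-measure regimes so that the limiting inequality is legitimate in every case; the remainder is routine verification.
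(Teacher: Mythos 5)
Your proposal is correct and follows essentially the same route as the paper, which obtains the corollary precisely by letting $p,q\to 0$ in the sharp inequality \eqref{eq:unweighteduncpripdc} of Theorem \ref{thm:unweighteduncprip} and verifying equality on the frequency/time bases via Proposition \ref{HarMeas} and \eqref{Dta}. Your dominated-convergence justification of the order-$0$ limit $h_s(g)\to\log\alpha(\mathrm{supp}(g))$ and the separate treatment of the infinite-measure case merely make explicit the details the paper leaves implicit.
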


For finite abelian groups, a series of works have focused on statements involving the cardinalities of supports of a function
and its Fourier transform. Donoho and Stark \cite{DS89} proved a lower bound for the product of support sizes and identified
extremals for cyclic groups and \cite{MOP04} gave a simple proof of the extension for finite abelian groups (although 
the same fact for general LCA groups was already obtained by Matolcsi and Sz\H{u}cs \cite{MS73} and Smith \cite{Smi90}).
We observe that if $X$ is a finite abelian group, then Corollary~\ref{cor:Unweightedpripqgoto0} reduces immediately to an uncertainty principle 
%related to the cardinality of the support of $\psi$ and that of $\hat{\psi}$, which implies Theorem 1 and Corollary 1 in 
%Donoho and Stark's paper \cite{DS89} (the two conclusions from \cite{DS89} solves the case that the group is a cyclic group, however the next corollary solves all finite abelian groups). 
of Donoho-Stark-type.

\begin{cor}\label{cor:FiniteUP}
Suppose that $X$ is a finite abelian group with cardinality $N$ and a counting Haar measure $1/\sqrt{N}$. Let $\hat{X}$ be its dual group with Haar measure $1/\sqrt{N}$. Then for any function $\psi$ on $X$, denote $N_t$ and $N_w$ the number of nonzero entries in $\psi$ and in $\hat{\psi}$ (here we use the same notation as in \cite{DS89}), we have
\begin{eqnarray}\label{eq:FiniteUP}
N_t\cdot N_w\ge N
\end{eqnarray}
Moreover, the inequality (\ref{eq:FiniteUP}) is sharp and the equality can be attained by any frequency basis and time basis of $X$.
\end{cor}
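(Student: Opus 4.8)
The plan is to deduce Corollary~\ref{cor:FiniteUP} directly from Corollary~\ref{cor:Unweightedpripqgoto0} by specializing to a finite abelian group, which is simultaneously compact and discrete, and by translating the measure-theoretic statement about supports into the combinatorial count of nonzero entries. The entire argument is a normalization bookkeeping plus a substitution.

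First I would verify that the prescribed counting measure $1/\sqrt N$ is exactly the self-dual Plancherel normalization demanded by Proposition~\ref{HarMeas}, so that the hypotheses of Theorem~\ref{thm:unweighteduncprip} (and hence of Corollary~\ref{cor:Unweightedpripqgoto0}) are genuinely met. Since $|X|=|\hat X|=N$, if $\alpha$ assigns a common mass $c$ to each point of $X$ then $\alpha(X)=Nc$, and Proposition~\ref{HarMeas} forces $\hat\alpha(y)=\alpha(X)^{-1}=1/(Nc)$ for each $y\in\hat X$. Requiring the same mass $c$ on $\hat X$ gives $c^2=1/N$, i.e. $c=1/\sqrt N$; this is precisely the measure stated in the corollary, so the normalization is consistent and both $\alpha$ and $\hat\alpha$ put mass $1/\sqrt N$ on every point.

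Next, given any nonzero $\psi$ on $X$, I would rescale it so that $|\psi|^2$ is a probability density; this leaves the supports of $\psi$ and $\hat\psi$, and therefore the counts $N_t$ and $N_w$, unchanged (scaling $\psi$ scales $\hat\psi$ by the same factor by linearity). Under the measure $1/\sqrt N$ one then has $\alpha(\mathrm{supp}(\psi))=N_t/\sqrt N$ and $\hat\alpha(\mathrm{supp}(\hat\psi))=N_w/\sqrt N$, since each nonzero entry contributes mass $1/\sqrt N$. Substituting these into the conclusion $\alpha(\mathrm{supp}(\psi))\,\hat\alpha(\mathrm{supp}(\hat\psi))\ge 1$ of Corollary~\ref{cor:Unweightedpripqgoto0} yields $N_tN_w/N\ge 1$, which is exactly \eqref{eq:FiniteUP}.

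Finally, for sharpness I would exhibit the extremals directly rather than invoke an abstract existence statement. A single character $\gamma\in\hat X$ satisfies $|\gamma(x)|=1$ for all $x$, so $N_t=N$, while its Fourier transform is a delta function by \eqref{Dta}, giving $N_w=1$ and $N_tN_w=N$; dually, a delta function $\mathbb{I}_{x_0}$ has $N_t=1$ and a Fourier transform of constant modulus $1/\sqrt N$, giving $N_w=N$ and again $N_tN_w=N$. These are precisely the normalized frequency and time bases already identified as extremal in Corollary~\ref{cor:Unweightedpripqgoto0}. I expect no serious obstacle in this argument; the only point requiring genuine care is the first step, since the whole reduction hinges on recognizing that the symmetric choice $c=1/\sqrt N$ is forced by the Plancherel normalization and makes $\alpha$ and $\hat\alpha$ coincide.
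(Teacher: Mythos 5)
Your proposal is correct and follows essentially the same route as the paper: normalize $\psi$ so that $|\psi|^2$ is a probability density (which leaves $N_t$ and $N_w$ unchanged), then substitute $\alpha(\mathrm{supp}(\psi))=N_t/\sqrt{N}$ and $\hat{\alpha}(\mathrm{supp}(\hat{\psi}))=N_w/\sqrt{N}$ into Corollary~\ref{cor:Unweightedpripqgoto0}. Your added checks---that $c=1/\sqrt{N}$ is forced by the Plancherel normalization of Proposition~\ref{HarMeas}, and the explicit computation showing characters and delta functions achieve equality---are sound and merely make explicit what the paper leaves implicit.
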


\begin{proof}
Note that it suffices to prove the case that $\psi$ is such that $|\psi|^2$ is a probability mass function, 
because normalization does not change the cardinality of the support of $\psi$ and $\hat{\psi}$. By (\ref{eq:unweightedpq=0}), we have that
\ben
\alpha\left(\mbox{supp}(\psi)\right)
\hat{\alpha}\left(\mbox{supp}\left(\hat{\psi}\right)\right)\ge 1\Rightarrow \frac{N_t}{\sqrt{N}}\cdot\frac{N_w}{\sqrt{N}}\ge 1\Rightarrow N_tN_w\ge N
\een
which provides (\ref{eq:FiniteUP}). 
\end{proof}

A trivial application of the AM-GM inequality implies that
\ben%\label{eq:FiniteUPSum}
N_t+N_w\ge 2\sqrt{N_tN_w}\ge 2\sqrt{N} ,
\een
constraining the sum of the support sizes of a function and its Fourier transform for a finite abelian group. As one might
expect, however, this is not sharp. A sharp inequality for the sum of supports was found for cyclic groups of prime order
by Tao \cite{Tao05} (the correct bound turns out to be $N+1$ rather than $2\sqrt{N}$), 
with a generalization to finite cyclic groups obtained by Ram Murty and Whang \cite{RW12}.

%Tao \cite{Tao05} and Ram Murty and Whang \cite{RW12} gave a strengthening (in terms of the sum of support sizes) for prime cyclic groups,
%and more generally, finite cyclic groups, respectively.

It is worth noting that Dembo, Cover and Thomas \cite{DCT91} obtained a Hirschman-type inequality for finite cyclic groups (i.e.,
a Shannon entropy version of inequality (\ref{eq:FiniteUP}), which is stronger than the inequality on product of support sizes), 
and that Przebinda, DeBrunner and \"Ozayd{\i}n \cite{PDO01} obtained a characterization of extremals for this inequality.

\section*{Acknowledgements} 

The authors are grateful to Philippe Jaming for useful comments on an earlier draft of this paper.

%\bibliographystyle{plain}
%\bibliography{$HOME/Dropbox/MAIN/WRITINGS/CommonResources/pustak}

\begin{thebibliography}{10}

\bibitem{AB77}
W.~O. Amrein and A.~M. Berthier.
\newblock On support properties of {$L^{p}$}-functions and their {F}ourier
  transforms.
\newblock {\em J. Functional Analysis}, 24(3):258--267, 1977.

\bibitem{Bec75}
W.~Beckner.
\newblock Inequalities in {F}ourier analysis.
\newblock {\em Ann. of Math. (2)}, 102(1):159--182, 1975.

\bibitem{Ber89}
A.~Bernal.
\newblock A note on the one-dimensional maximal function.
\newblock {\em Proc. Roy. Soc. Edinburgh Sect. A}, 111(3-4):325--328, 1989.

\bibitem{BD06}
A.~Bonami and B.~Demange.
\newblock A survey on uncertainty principles related to quadratic forms.
\newblock {\em Collect. Math.}, (Vol. Extra):1--36, 2006.

\bibitem{Chi76}
A.~L. Chistyakov.
\newblock On uncertainty relations for vector-valued operators.
\newblock {\em Teoret. Mat. Fiz.}, 27:130--134, 1976.

\bibitem{Chr04}
J.~G. Christensen.
\newblock The uncertainty principle for operators determined by {L}ie groups.
\newblock {\em J. Fourier Anal. Appl.}, 10(5):541--544, 2004.

\bibitem{CP84}
M.~G. Cowling and J.~F. Price.
\newblock Bandwidth versus time concentration: the {H}eisenberg-{P}auli-{W}eyl
  inequality.
\newblock {\em SIAM J. Math. Anal.}, 15(1):151--165, 1984.

\bibitem{DT15}
G.~M. Dall'Ara and D.~Trevisan.
\newblock Uncertainty inequalities on groups and homogeneous spaces via
  isoperimetric inequalities.
\newblock {\em J. Geom. Anal.}, 25(4):2262--2283, 2015.

\bibitem{deB67}
N.~G. de~Bruijn.
\newblock Uncertainty principles in {F}ourier analysis.
\newblock In {\em Inequalities ({P}roc. {S}ympos. {W}right-{P}atterson {A}ir
  {F}orce {B}ase, {O}hio, 1965)}, pages 57--71. Academic Press, New York, 1967.

\bibitem{DHPO05}
V.~DeBrunner, J.~P. Havlicek, T.~Przebinda, and M.~{\"O}zayd{\i}n.
\newblock Entropy-based uncertainty measures for {$L^2(\Bbb R^n)$}, {$l^2(\Bbb
  Z)$}, and {$l^2(\Bbb Z/N\Bbb Z)$} with a {H}irschman optimal transform for
  {$l^2(\Bbb Z/N\Bbb Z)$}.
\newblock {\em IEEE Trans. Signal Process.}, 53(8, part 1):2690--2699, 2005.

\bibitem{DCT91}
A.~Dembo, T.M. Cover, and J.A. Thomas.
\newblock Information-theoretic inequalities.
\newblock {\em IEEE Trans. Inform. Theory}, 37(6):1501--1518, 1991.

\bibitem{DS89}
D.~L. Donoho and P.~B. Stark.
\newblock Uncertainty principles and signal recovery.
\newblock {\em SIAM J. Appl. Math.}, 49(3):906--931, 1989.

\bibitem{Fef83}
C.~L. Fefferman.
\newblock The uncertainty principle.
\newblock {\em Bull. Amer. Math. Soc. (N.S.)}, 9(2):129--206, 1983.

\bibitem{FS97:1}
G.~B. Folland and A.~Sitaram.
\newblock The uncertainty principle: a mathematical survey.
\newblock {\em J. Fourier Anal. Appl.}, 3(3):207--238, 1997.

\bibitem{Fou73}
J.~J.~F. Fournier.
\newblock Local complements to the {H}ausdorff-{Y}oung theorem.
\newblock {\em Michigan Math. J.}, 20:263--276, 1973.

\bibitem{GR10}
J.~Gilbert and Z.~Rzeszotnik.
\newblock The norm of the {F}ourier transform on finite abelian groups.
\newblock {\em Ann. Inst. Fourier (Grenoble)}, 60(4):1317--1346, 2010.

\bibitem{Gra14a:book}
L.~Grafakos.
\newblock {\em Classical {F}ourier analysis}, volume 249 of {\em Graduate Texts
  in Mathematics}.
\newblock Springer, New York, third edition, 2014.

\bibitem{GM97}
L.~Grafakos and S.~Montgomery-Smith.
\newblock Best constants for uncentred maximal functions.
\newblock {\em Bull. London Math. Soc.}, 29(1):60--64, 1997.

\bibitem{Har33}
G.~H. Hardy.
\newblock {A theorem concerning Fourier transforms}.
\newblock {\em J. London Math. Soc.}, 8:227--231, 1933.

\bibitem{HJ94:book}
V.~Havin and B.~J{\"o}ricke.
\newblock {\em The uncertainty principle in harmonic analysis}, volume~28 of
  {\em Ergebnisse der Mathematik und ihrer Grenzgebiete (3) [Results in
  Mathematics and Related Areas (3)]}.
\newblock Springer-Verlag, Berlin, 1994.

\bibitem{HJ95}
V.~P. Havin and B.~J{\"o}ricke.
\newblock The uncertainty principle in harmonic analysis [ {MR}1129019
  (93e:42001)].
\newblock In {\em Commutative harmonic analysis, {III}}, volume~72 of {\em
  Encyclopaedia Math. Sci.}, pages 177--259, 261--266. Springer, Berlin, 1995.

\bibitem{Hir57}
I.~I. Hirschman, Jr.
\newblock A note on entropy.
\newblock {\em Amer. J. Math.}, 79:152--156, 1957.

\bibitem{Hog93}
J.~A. Hogan.
\newblock A qualitative uncertainty principle for unimodular groups of type
  {${\rm I}$}.
\newblock {\em Trans. Amer. Math. Soc.}, 340(2):587--594, 1993.

\bibitem{Hor91}
L.~H{\"o}rmander.
\newblock A uniqueness theorem of {B}eurling for {F}ourier transform pairs.
\newblock {\em Ark. Mat.}, 29(2):237--240, 1991.

\bibitem{Jam07}
P.~Jaming.
\newblock Nazarov's uncertainty principles in higher dimension.
\newblock {\em J. Approx. Theory}, 149(1):30--41, 2007.

\bibitem{Kan07}
E.~Kaniuth.
\newblock Minimizing functions for an uncertainty principle on locally compact
  groups of bounded representation dimension.
\newblock {\em Proc. Amer. Math. Soc.}, 135(1):217--227 (electronic), 2007.

\bibitem{Ken27}
E.~H. Kennard.
\newblock {Zur Quantenmechanik einfacher Bewegungstypen}.
\newblock {\em Z. Physik}, 44(326--352), 1927.

\bibitem{Kov01}
O.~Kovrijkine.
\newblock Some results related to the {L}ogvinenko-{S}ereda theorem.
\newblock {\em Proc. Amer. Math. Soc.}, 129(10):3037--3047, 2001.

\bibitem{Kra67}
K.~Kraus.
\newblock A further remark on uncertainty relations.
\newblock {\em Z. Physik}, 201:134--141, 1967.

\bibitem{Lie90}
E.~H. Lieb.
\newblock Gaussian kernels have only {G}aussian maximizers.
\newblock {\em Invent. Math.}, 102(1):179--208, 1990.

\bibitem{LS74}
V.~N. Logvinenko and Ju.~F. Sereda.
\newblock Equivalent norms in spaces of entire functions of exponential type.
\newblock {\em Teor. Funkci\u\i\ Funkcional. Anal. i Prilo\v zen.}, (Vyp.
  20):102--111, 175, 1974.

\bibitem{MM15}
J.~Mart{\'{\i}}n and M.~Milman.
\newblock Isoperimetric weights and generalized uncertainty inequalities in
  metric measure spaces.
\newblock {\em Preprint, {\tt arXiv:1501.06556}}, 2015.

\bibitem{MS73}
T.~Matolcsi and J.~Sz{\H{u}}cs.
\newblock Intersection des mesures spectrales conjugu\'ees.
\newblock {\em C. R. Acad. Sci. Paris S\'er. A-B}, 277:A841--A843, 1973.

\bibitem{MOP04}
E.~Matusiak, M.~{\"O}zayd{\i}n, and T.~Przebinda.
\newblock The {D}onoho-{S}tark uncertainty principle for a finite abelian
  group.
\newblock {\em Acta Math. Univ. Comenian. (N.S.)}, 73(2):155--160, 2004.

\bibitem{Mel03}
A.~D. Melas.
\newblock The best constant for the centered {H}ardy-{L}ittlewood maximal
  inequality.
\newblock {\em Ann. of Math. (2)}, 157(2):647--688, 2003.

\bibitem{RW12}
M.~Ram Murty and J.~P. Whang.
\newblock The uncertainty principle and a generalization of a theorem of {T}ao.
\newblock {\em Linear Algebra Appl.}, 437(1):214--220, 2012.

\bibitem{Naz96}
F.~L. Nazarov.
\newblock On the theorems of {T}ur\'an, {A}mrein and {B}erthier, and {Z}ygmund.
\newblock {\em Zap. Nauchn. Sem. S.-Peterburg. Otdel. Mat. Inst. Steklov.
  (POMI)}, 201(Issled. po Linein. Oper. Teor. Funktsii. 20):117--123, 191,
  1992.

\bibitem{OP04}
M.~{\"O}zaydin and T.~Przebinda.
\newblock An entropy-based uncertainty principle for a locally compact abelian
  group.
\newblock {\em J. Funct. Anal.}, 215(1):241--252, 2004.

\bibitem{Pan95}
B.~Paneah.
\newblock Support dependent weighted norm estimates for {F}ourier transforms.
\newblock {\em J. Math. Anal. Appl.}, 189(2):552--574, 1995.

\bibitem{Pan98}
B.~Paneah.
\newblock Support-dependent weighted norm estimates for {F}ourier transforms.
  {II}.
\newblock {\em Duke Math. J.}, 92(2):335--353, 1998.

\bibitem{PT07}
S.~Parui and S.~Thangavelu.
\newblock Variations on a theorem of {C}owling and {P}rice with applications to
  nilpotent {L}ie groups.
\newblock {\em J. Aust. Math. Soc.}, 82(1):11--27, 2007.

\bibitem{Prz04}
T.~Przebinda.
\newblock Three uncertainty principles for an abelian locally compact group.
\newblock In {\em Representations of real and {$p$}-adic groups}, volume~2 of
  {\em Lect. Notes Ser. Inst. Math. Sci. Natl. Univ. Singap.}, pages 1--18.
  Singapore Univ. Press, Singapore, 2004.

\bibitem{PDO01}
T.~Przebinda, V.~DeBrunner, and M.~{\"O}zayd{\i}n.
\newblock The optimal transform for the discrete {H}irschman uncertainty
  principle.
\newblock {\em IEEE Trans. Inform. Theory}, 47(5):2086--2090, 2001.

\bibitem{Rob29}
H.~P. Robertson.
\newblock The uncertainty principle.
\newblock {\em Phys. Rev.}, 34(573--574), 1929.

\bibitem{Rud62:book}
W.~Rudin.
\newblock {\em Fourier analysis on groups}.
\newblock Wiley Classics Library. John Wiley \& Sons, Inc., New York, 1990.
\newblock Reprint of the 1962 original, A Wiley-Interscience Publication.

\bibitem{Sen14}
D.~Sen.
\newblock The uncertainty relations in quantum mechanics.
\newblock {\em Current Science}, 107(2):203--218, 2014.

\bibitem{Smi90}
K.~T. Smith.
\newblock The uncertainty principle on groups.
\newblock {\em SIAM J. Appl. Math.}, 50(3):876--882, 1990.

\bibitem{Sta59}
A.J. Stam.
\newblock Some inequalities satisfied by the quantities of information of
  {F}isher and {S}hannon.
\newblock {\em Information and Control}, 2:101--112, 1959.

\bibitem{Ste82}
E.~M. Stein.
\newblock The development of square functions in the work of {A}. {Z}ygmund.
\newblock {\em Bull. Amer. Math. Soc. (N.S.)}, 7(2):359--376, 1982.

\bibitem{SS83}
E.~M. Stein and J.-O. Str{\"o}mberg.
\newblock Behavior of maximal functions in {${\bf R}^{n}$} for large {$n$}.
\newblock {\em Ark. Mat.}, 21(2):259--269, 1983.

\bibitem{Tao05}
T.~Tao.
\newblock An uncertainty principle for cyclic groups of prime order.
\newblock {\em Math. Res. Lett.}, 12(1):121--127, 2005.

\bibitem{Tha04:book}
S.~Thangavelu.
\newblock {\em An introduction to the uncertainty principle}, volume 217 of
  {\em Progress in Mathematics}.
\newblock Birkh\"auser Boston, Inc., Boston, MA, 2004.
\newblock Hardy's theorem on Lie groups, With a foreword by Gerald B. Folland.

\bibitem{Wey50:book}
H.~Weyl.
\newblock {\em The theory of groups and quantum mechanics}.
\newblock Dover Publications, Inc., New York, 1950.
\newblock Translated from the second (revised) German edition by H. P.
  Roberton, Reprint of the 1931 English translation.

\bibitem{ZPV08}
S.~Zozor, M.~Portesi, and C.~Vignat.
\newblock Some extensions of the uncertainty principle.
\newblock {\em Phys. A.}, 387(19-20):4800--4808, 2008.

\bibitem{Zyg59:book}
A.~Zygmund.
\newblock {\em Trigonometric series. 2nd ed. {V}ols. {I}, {II}}.
\newblock Cambridge University Press, New York, 1959.

\end{thebibliography}

\end{document}